\theoremstyle{plain}
\newtheorem{theorem}{Theorem}[section]
\newtheorem{lemma}[theorem]{Lemma}
\newtheorem{proposition}[theorem]{Proposition}
\newtheorem{corollary}[theorem]{Corollary}
\theoremstyle{definition}
\newtheorem{definition}[theorem]{Definition}
\newtheorem{remark}[theorem]{Remark}
\theoremstyle{remark}
\mathchardef\emptyset="001F
\numberwithin{equation}{section}
\newcommand{\e}{\varepsilon}
\newcommand{\vphi}{\varphi}
\newcommand{\N}{\mathbb N}
\newcommand{\R}{\mathbb R}
\renewcommand{\div}{{\rm div}}
\newcommand{\LL}{{\mathcal L}}
\newcommand{\hn}{{\mathcal H}^{N-1}}
\newcommand{\wto}{\rightharpoonup}
\newcommand{\setmeno}{\!\setminus\!}
\newcommand{\dd}{\mathrm{d}}
\newcommand{\f}{\mathcal{F}}
\newcommand{\p}{\mathcal{P}}
\newcommand{\nl}{\mathcal{NL}}
\newcommand{\mloc}{m_{\rm loc}}
\newcommand{\mglob}{m_{\rm glob}}
\newcommand{\I}{\mathcal{I}}
\newcommand{\ftil}{\widetilde{F}_h}
\newcommand{\average}{{\mathchoice {\kern1ex\vcenter{\hrule height.4pt
width 6pt
depth0pt} \kern-9.7pt} {\kern1ex\vcenter{\hrule height.4pt width 4.3pt
depth0pt}
\kern-7pt} {} {} }}
\newcommand{\med}{\average\int}
\newdimen\mex
\def\niv{\mathrel{\hbox{\hglue -0.4\mex
\vrule \@height 1\mex \@width .1\mex
\vrule \@height .1\mex \@width 1\mex
\hglue -0.2\mex}}}
\title[Local and global minimality results for a nonlocal isoperimetric problem on $\R^N$]{Local and global minimality results for a nonlocal isoperimetric problem on $\R^N$}
\author{M.\ Bonacini}
\author{R.\ Cristoferi}
\address[M.~Bonacini]{SISSA, Via Bonomea 265, 34136 Trieste, Italy}
\email[Marco Bonacini]{marco.bonacini@sissa.it}
\address[R.\ Cristoferi]{SISSA, Via Bonomea 265, 34136 Trieste, Italy}
\email[Riccardo Cristoferi]{riccardo.cristoferi@sissa.it}
\subjclass[2010]{49Q10, 49Q20, 35R35, 82B24}
\keywords{Nonlocal isoperimetric problem, Minimality conditions, Second variation, Local minimizers, Global minimizers}
\thanks{Preprint SISSA 32/2013/MATE}
\begin{document}

\begin{abstract}
We consider a nonlocal isoperimetric problem defined in the whole space $\R^N$, whose nonlocal part is given by a Riesz potential with exponent $\alpha\in(0,N - 1)$. We show that
critical configurations with positive second variation are local minimizers and satisfy a quantitative inequality with respect to the $L^1$-norm. This criterion provides the existence
of a (explicitly determined) critical threshold determining the interval of volumes for which the ball is a local minimizer, and allows to address several global minimality issues.
\end{abstract}

\maketitle


\section{Introduction} \label{sect:intro}

In this paper we provide a description of the energy landscape of the family of functionals
\begin{equation} \label{eq:functional}
\f(E) := \p(E) + \gamma\int_{\R^N}\int_{\R^N} \frac{\chi_E(x)\chi_E(y)}{|x-y|^\alpha}\,\dd x\dd y\,,
\qquad\alpha\in(0,N-1),\,\gamma>0
\end{equation}
defined over finite perimeter sets $E\subset\R^N$, $N\geq2$.
Here $\p(E)$ denotes the perimeter of $E$ in $\R^N$ and $\chi_E$ its characteristic function.
In the following we will usually denote the second term in the functional by $\nl_{\alpha}(E)$,
and we will omit the subscript $\alpha$ when there is no risk of ambiguity.

The nature of the energy \eqref{eq:functional} appears in the modeling of different physical problems.
The most physically relevant case is in three dimensions with $\alpha=1$,
where the nonlocal term corresponds to a Coulombic repulsive interaction:
one of the first examples is the celebrated Gamow's water-drop model for the constitution of the atomic nucleus (see \cite{Gamow}), and energies of this kind are also related
(via $\Gamma$-convergence) to the Ohta-Kawasaki model for diblock copolymers (see \cite{OhtaKaw}).
For a more specific account on the physical background of this kind of problems, we refer to \cite{Mur}.

From a mathematical point of view, functionals of the form \eqref{eq:functional} recently drew the attention of many authors
(see \cite{AceFusMor,ChoPel1,ChoPel2,CicSpa,GolMurSerI,GolMurSerII,Jul,KnuMur1,KnuMur2,LuOtt,Mur2,SteTop}).
The main feature of the energy \eqref{eq:functional} is the presence of two competing terms,
the sharp interface energy and the long-range repulsive interaction.
Indeed, while the first term is minimized by the ball (by the isoperimetric inequality),
the nonlocal term is in fact maximized by the ball,
as a consequence of the Riesz's rearrangement inequality (see \cite[Theorem~3.7]{LiebLoss}),
and favours scattered or oscillating configurations.
Hence, due to the presence of this competition in the structure of the problem, the minimization of $\f$ is highly non trivial.

We remark that, by scaling, minimizing \eqref{eq:functional} under the volume constraint $|E|=m$
is equivalent to the minimization of the functional
$$
\p(E) + \gamma \left(\frac{m}{|B_1|}\right)^{\frac{N-\alpha+1}{N}} \nl_\alpha(E)
$$
under the constraint $|E|=|B_1|$, where $B_1$ is the unit ball in $R^N$.
It is clear from this expression that, for small masses, the perimeter is the leading term and this suggests that in this case the ball should be the solution to the minimization
problem; on the other hand, for large masses the nonlocal term becomes dominant and causes the existence of a solution to fail.

Indeed it was proved, although not in full generality, that the functional $\f$ is uniquely minimized (up to translations) by the ball
for every value of the volume below a critical threshold:
in \cite{KnuMur1} for the planar case $N=2$, in \cite{KnuMur2} for $3\leq N\leq7$,
and in \cite{Jul} for any dimension $N$ but for $\alpha=N-2$.
Moreover, the existence of a critical mass above which the minimum problem does not admit a solution was established
in \cite{KnuMur1} in dimension $N=2$, in \cite{KnuMur2} for every $N$ and for $\alpha\in(0,2)$,
and in \cite{LuOtt} in the physical interesting case $N=3$, $\alpha=1$.

In this paper we aim at providing a more detailed picture of the energy landscape of the functional \eqref{eq:functional}
by a totally different approach, based on the positivity of the second variation and mainly inspired by \cite{AceFusMor}
(which deals with the same energy functional, with $\alpha=N-2$, in a periodic setting).
First, we reobtain and strengthen some of the above results, proving in full generality
that the ball is the unique global minimizer for small masses,
without restrictions on the parameters $N$ and $\alpha$ (Theorem~\ref{teo:minglobpalla}).

Moreover, for $\alpha$ small we also provide a complete characterization of the ground state,
showing that the ball is the unique global minimizer, as long as a minimizer exists (Theorem~\ref{teo:apiccolo}),
and that in this regime we can write $(0,\infty)=\cup_k(m_k,m_{k+1}]$, with $m_{k+1}>m_k$, in such a way that for $m\in[m_{k-1},m_{k}]$
a minimizing sequence for the functional is given by a configuration of at most $k$ disjoint balls with diverging mutual distance (Theorem~\ref{teo:apiccolo2}).
The results stated in Theorem~\ref{teo:apiccolo} and Theorem~\ref{teo:apiccolo2} are completely new (except for the first one, which was proved only in the special case $N=2$ in \cite{KnuMur1}).

Finally, we also investigate for the first time in this context the issue of \emph{local minimizers}, that is, sets which minimize the energy with respect to competitors sufficiently close in the $L^1$-sense (where we
measure the distance between two sets by the quantity \eqref{eq:asimmetria}, which takes into account the translation invariance of the functional).
For any $N$ and $\alpha$ we show the existence of a volume threshold below which the ball is also an isolated local minimizer, determining it explicitly in the three dimensional case with a Newtonian
potential (Theorem~\ref{teo:minlocpalla}).

As anticipated above, our methods follow a second variation approach which has been recently developed and applied to different variational problems, whose common feature is the fact
that the energy functionals are characterized by the competition between bulk energies and surface energies (see \cite{FusMor} in the context of epitaxially strained elastic films,
\cite{CagMorMor, BonMor} for the Mumford-Shah functional, \cite{CapJulPis} for a variational model for cavities in elastic bodies).
In particular we stress the attention on \cite{AceFusMor}, which deals with energies in the form \eqref{eq:functional} in a periodic setting (see also \cite{JulPis}, where the same problem is considered in an open set with Neumann boundary conditions).
The basic idea is to associate with the second variation of $\f$ at a regular critical set $E$, a quadratic form defined on the functions $\vphi\in H^1(\partial E)$ such that
$\int_{\partial E}\vphi=0$, whose non-negativity is easily seen to be a necessary condition for local minimality.

In fact, one of the main tools in proving the aforementioned results is represented by Theorem~\ref{teo:minl1}, where we show that the strict
positivity of this quadratic form is sufficient for local minimality with respect to $L^1$-perturbations.
Although the general strategy to establish this theorem follows the one developed in \cite{FusMor,AceFusMor},
we have to tackle the nontrivial technical difficulties coming from working with a more general nonlocal term (the exponent $\alpha$ is allowed to range in the whole interval $(0,N-1)$) and from the lack of compactness of the ambient space $\R^N$.
The proof passes through an intermediate step, which amounts to showing that a critical set $E$ with positive second variation is a minimizer with respect to sets whose boundary is a
normal graph on $\partial E$ with $W^{2,p}$-norm sufficiently small (Theorem~\ref{teo:minw}).
Then a contradiction argument,
which is mainly based on the regularity properties of quasi-minimizers of the area functional
(and which is close in spirit to the ideas introduced by Cicalese and Leonardi \cite{CicLeo} to provide an alternative proof of the standard quantitative isoperimetric inequality)
leads to the conclusion that this weaker notion of local minimality implies the local minimality in the $L^1$-sense.

The proofs of the global minimality results described above require additional arguments and nontrivial refinements of the previous ideas.

An issue which remains unsolved is concerned with the structure of the set of masses for which the problem does not have a solution: is it always true that it has the form
$(m,+\infty)$ for all the values of $\alpha$ and $N$? Notice that we provide a positive answer to this question in the case where $\alpha$ is small.
Another interesting question asks if there are other global (or local) minimizers different from the ball.
Finally, our analysis leaves open the case of $\alpha\in[N-1,N)$, which seems to require different techniques.

\smallskip
This paper is organized as follows.
In Section~\ref{sect:setting} we set up the problem and we list the main results of this work.
The notion of second variation of the functional $\f$ is introduced in Section~\ref{sect:2var},
together with the first part of the proof of the main result of the paper, which is completed in Section~\ref{sect:l1min}.
In Section~\ref{sect:ball} we compute explicitly the second variation of the ball, and we discuss its local minimality by applying our sufficiency criterion. Finally, Section~\ref{sect:global} is devoted to the proof of the results concerning the global minimality issues.


\section{Statements of the results} \label{sect:setting}

We start our analysis with some preliminary observations about the features of the energy functional \eqref{eq:functional},
before listing the main results of this work.
For a finite perimeter set $E$, we will denote by $\nu_E$ the exterior generalized unit normal to $\partial^*E$,
and we will not indicate the dependence on the set $E$ when no confusion is possible.

Given a measurable set $E\subset\R^N$, we introduce an auxiliary function $v_E$ by setting
\begin{equation} \label{eq:ve}
v_E(x) := \int_E \frac{1}{|x-y|^\alpha}\,\dd y
\qquad\text{for }x\in\R^N.
\end{equation}
The function $v_E$ can be characterized as the solution to the equation
\begin{equation} \label{eq:ve2}
(-\Delta)^s v_E = c_{N,s}\,\chi_E\,,
\qquad s=\frac{N-\alpha}{2}
\end{equation}
where $(-\Delta)^s$ denotes the fractional laplacian and $c_{N,s}$ is a constant depending on the dimension and on $s$
(see \cite{PalVal} for an introductory account on this operator and the references contained therein).
Notice that we are interested in those values of $s$ which range in the interval $(\frac12,\frac{N}{2})$.
We collect in the following proposition some regularity properties of the function $v_E$.

\begin{proposition} \label{prop:ve}
Let $E\subset\R^N$ be a measurable set with $|E|\leq m$.
Then there exists a constant $C$, depending only on $N,$ $\alpha$ and $m$, such that
\begin{equation*}
\|v_E\|_{W^{1,\infty}(\R^N)}\leq C\,.
\end{equation*}
Moreover, $v_E\in C^{1,\beta}(\R^N)$ for every $\beta<N-\alpha-1$ and
\begin{equation*}
\|v_E\|_{C^{1,\beta}(\R^N)}\leq C'
\end{equation*}
for some positive constant $C'$ depending only on $N,$ $\alpha,$ $m$ and $\beta$.
\end{proposition}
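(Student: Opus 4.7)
\noindent\emph{Proof plan.} Both estimates will follow from direct pointwise computations, the only analytic input being that $|z|^{-\beta}$ is locally integrable on $\R^N$ precisely when $\beta<N$. The bound $|E|\le m$ will be exploited in a Hardy--Littlewood rearrangement spirit to reduce the worst-case estimates to integrals over concentric balls of volume $m$.

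For the $L^\infty$ bounds, note first that since $\alpha<N$, the Hardy--Littlewood rearrangement inequality yields, for every $x\in\R^N$,
\[
v_E(x)\le\int_{B_R(x)}|x-y|^{-\alpha}\,\dd y=\frac{N\omega_N}{N-\alpha}\,R^{N-\alpha},\qquad R:=(|E|/\omega_N)^{1/N}\le(m/\omega_N)^{1/N},
\]
giving $\|v_E\|_\infty\le C(N,\alpha,m)$. Because also $\alpha+1<N$, the integrand $|x-y|^{-\alpha-1}$ is locally integrable, which legitimates differentiation under the integral and produces the explicit formula $\nabla v_E(x)=-\alpha\int_E(x-y)|x-y|^{-\alpha-2}\,\dd y$. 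The same rearrangement argument, now with exponent $\alpha+1$ in place of $\alpha$, then delivers $\|\nabla v_E\|_\infty\le C(N,\alpha,m)$.

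For the Hölder estimate on $\nabla v_E$, fix $x_1,x_2\in\R^N$, set $r:=|x_1-x_2|$, and write $F(z):=z|z|^{-\alpha-2}$, so that $|F(z)|\le|z|^{-\alpha-1}$ and $|\nabla F(z)|\le C|z|^{-\alpha-2}$. I would split $E=A\cup B$ with $A:=E\cap(B_{2r}(x_1)\cup B_{2r}(x_2))$ and $B:=E\setminus A$. On the near piece $A$ the triangle inequality and the inclusion $A\subset B_{3r}(x_i)$ give
\[
\int_A|F(x_1-y)-F(x_2-y)|\,\dd y\le C\int_{B_{3r}(0)}|z|^{-\alpha-1}\,\dd z\le C\,r^{N-\alpha-1}.
\]
On the far piece $B$ the whole segment from $x_1$ to $x_2$ stays at distance at least $r$ from every $y$, so $|\xi-y|\sim|x_1-y|$ uniformly in $\xi\in[x_1,x_2]$ and the mean value theorem bounds $|F(x_1-y)-F(x_2-y)|\le Cr\,|x_1-y|^{-\alpha-2}$. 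Invoking rearrangement once more (replacing $B$ by $B_R(x_1)\setminus B_{2r}(x_1)$, with $|B_R|=m$) reduces the remaining task to estimating $r\int_{2r}^R\rho^{N-\alpha-3}\,\dd\rho$.

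The main technical point is the case analysis for this last integral, according to the sign of $N-\alpha-2$. If $\alpha>N-2$ the integral is controlled by its lower limit and the contribution is $Cr^{N-\alpha-1}$, matching the near estimate; if $\alpha=N-2$ one obtains a term of order $Cr|\log r|$; if $\alpha<N-2$ the integral is bounded by a constant depending only on $N,\alpha,m$ and the contribution is $Cr$. In every case one recovers $|\nabla v_E(x_1)-\nabla v_E(x_2)|\le C\,r^{\beta}$ for every $\beta<N-\alpha-1$ (implicitly with $\beta\le 1$), which combined with the previous step yields the stated $C^{1,\beta}$ bound.
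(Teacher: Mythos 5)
Your proof is correct, but it takes a genuinely different route from the paper's in both halves. For the $W^{1,\infty}$ bound the paper simply splits $E$ into $E\cap B_1(x)$ and $E\setminus B_1(x)$, estimating the first piece by the integrable singularity and the second by $m$, whereas you invoke the Hardy--Littlewood/bathtub rearrangement to compare with a ball of volume $m$ centred at $x$; both give the same kind of uniform constant. The real divergence is in the H\"older estimate: the paper avoids any case analysis by adding and subtracting the mixed term $\frac{(x-y)|z-y|^\beta}{|x-y|^{\alpha+\beta+2}}-\frac{(z-y)|x-y|^\beta}{|z-y|^{\alpha+\beta+2}}$ and using the elementary inequality $\bigl|\,v|v|^{\alpha+2\beta}-w|w|^{\alpha+2\beta}\bigr|\le C\max\{|v|,|w|\}^{\alpha+\beta+1}|v-w|^\beta$, which extracts the factor $|x-z|^\beta$ in one stroke and leaves only kernels of exponent $\alpha+\beta+1<N$, bounded exactly as in the $L^\infty$ step; this interpolation argument also makes the uniformity of the constants for $\alpha\le\bar\alpha$ (exploited later, e.g.\ in Proposition~\ref{prop:NLlipschitz}) completely transparent. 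You instead use the classical near/far decomposition at scale $r=|x_1-x_2|$, with the mean value theorem on the far region and a three-case discussion of the radial integral according to the sign of $N-\alpha-2$; this is longer but yields the sharper modulus of continuity ($r^{N-\alpha-1}$, $r|\log r|$, or $r$, respectively), of which the stated $C^{1,\beta}$ bound for every $\beta<N-\alpha-1$ (with $\beta\le1$) is an immediate consequence. Two harmless imprecisions you should fix when writing this up: on the far piece the rearrangement should compare with the annulus outside $B_{2r}(x_1)$ of measure $|B|\le m$, whose outer radius is at most $(m/\omega_N)^{1/N}+2r$ rather than $(m/\omega_N)^{1/N}$ (this changes nothing in the radial integral for $r\le1$), and the H\"older quotient for $r\ge1$ should be disposed of directly via the $L^\infty$ bound on $\nabla v_E$, as you implicitly do.
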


\begin{proof}
The first part of the result is proved in \cite[Lemma~4.4]{KnuMur2}, but we repeat here the easy proof for the reader's convenience.
By \eqref{eq:ve},
\begin{align*}
v_E(x)
= \int_{B_1(x)\cap E}\frac{1}{|x-y|^{\alpha}}\,\dd y + \int_{E\setminus B_1(x)}\frac{1}{|x-y|^\alpha}\,\dd y
\leq \int_{B_1}\frac{1}{|y|^\alpha}\,\dd y + m \leq C.
\end{align*}
By differentiating \eqref{eq:ve} in $x$ and arguing similarly, we obtain
\begin{align*}
|\nabla v_E(x)|
\leq \alpha \int_{E}\frac{1}{|x-y|^{\alpha+1}}\,\dd y
\leq \alpha \int_{B_1}\frac{1}{|y|^{\alpha+1}}\,\dd y + \alpha\,m \leq C.
\end{align*}
Finally, by adding and subtracting the term $\frac{(x-y)|z-y|^\beta}{|x-y|^{\alpha+\beta+2}} - \frac{(z-y)|x-y|^\beta}{|z-y|^{\alpha+\beta+2}}$, we can write
\begin{align} \label{stimeholder}
|\nabla v_E(x) - \nabla v_E(z)|
& \leq \alpha \int_E \bigg| \frac{x-y}{|x-y|^{\alpha+2}}-\frac{z-y}{|z-y|^{\alpha+2}}\bigg| \,dy \nonumber\\
&\leq \alpha\int_E \biggl(\frac{1}{|x-y|^{\alpha+\beta+1}}+\frac{1}{|z-y|^{\alpha+\beta+1}}\biggr) \big||x-y|^\beta-|z-y|^\beta\big|\,\dd y \\
&\quad+\alpha\int_E \bigg| \frac{(x-y)|z-y|^\beta}{|x-y|^{\alpha+\beta+2}} - \frac{(z-y)|x-y|^\beta}{|z-y|^{\alpha+\beta+2}} \bigg|\,\dd y \nonumber
\end{align}
Observe now that for every $v,w\in\R^N\setminus\{0\}$
\begin{align*}
\bigg| \frac{v}{|v|}|w|^{\alpha+2\beta+1} - \frac{w}{|w|}|v|^{\alpha+2\beta+1} \bigg|
&= \big| v\,|v|^{\alpha+2\beta} - w\,|w|^{\alpha+2\beta} \big|
\leq C\max\{|v|,|w|\}^{\alpha+2\beta}|v-w|\\
&\leq C\max\{|v|,|w|\}^{\alpha+\beta+1}|v-w|^{\beta}
\end{align*}
where $C$ depends on $N,$ $\alpha$ and $\beta$.
Using this inequality to estimate the second term in \eqref{stimeholder} we deduce
\begin{align*}
|\nabla v_E(x) - \nabla v_E(z)|
&\leq \alpha|x-z|^\beta\int_E \biggl(\frac{1}{|x-y|^{\alpha+\beta+1}}+\frac{1}{|z-y|^{\alpha+\beta+1}} + \frac{C}{\min\{|x-y|,|z-y|\}^{\alpha+\beta+1}}\biggr) \,\dd y
\end{align*}
which completes the proof of the proposition, since the last integral is bounded by a constant
depending only on $N,$ $\alpha,$ $m$ and $\beta$.
\end{proof}

\begin{remark} \label{rm:ve}
In the case $\alpha=N-2$, the function $v_E$ solves the equation $-\Delta v_E = c_N\,\chi_E$, and the nonlocal term is exactly
$$
\nl_{N-2}(E) = \int_{\R^N}|\nabla v_E(x)|^2\,\dd x\,.
$$
By standard elliptic regularity, $v_E\in W^{2,p}_{\rm loc}(\R^N)$ for every $p\in[1,+\infty)$.
\end{remark}

The following proposition contains an auxiliary result which will be used frequently in the rest of the paper.

\begin{proposition}[Lipschitzianity of the nonlocal term] \label{prop:NLlipschitz}
Given $\bar{\alpha}\in(0,N-1)$ and $m\in(0,+\infty)$,
there exists a constant $c_0$, depending only on $N,$ $\bar{\alpha}$ and $m$ such that
if $E,F\subset\R^N$ are measurable sets with $|E|,|F|\leq m$ then
\begin{equation*}
|\nl_{\alpha}(E)-\nl_{\alpha}(F)| \leq c_0|E\triangle F|
\end{equation*}
for every $\alpha\leq\bar{\alpha}$,
where $\triangle$ denotes the symmetric difference of two sets.
\end{proposition}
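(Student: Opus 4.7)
The plan is to linearize the difference $\nl_\alpha(E)-\nl_\alpha(F)$ via a standard telescoping identity and then invoke the $L^\infty$ bound on the potentials $v_E,v_F$ coming from Proposition~\ref{prop:ve}. Concretely, I would start from the double-integral representation and use the pointwise identity
$$
\chi_E(x)\chi_E(y)-\chi_F(x)\chi_F(y)=\bigl(\chi_E(x)-\chi_F(x)\bigr)\chi_E(y)+\chi_F(x)\bigl(\chi_E(y)-\chi_F(y)\bigr).
$$
Integrating against the Riesz kernel $|x-y|^{-\alpha}$ and using Fubini to carry out the inner integrals then yields
$$
\nl_\alpha(E)-\nl_\alpha(F)=\int_{\R^N}(\chi_E-\chi_F)(x)\,v_E(x)\,\dd x+\int_{\R^N}(\chi_E-\chi_F)(y)\,v_F(y)\,\dd y.
$$

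Next I would take absolute values and estimate each term by $\|v_E\|_\infty\|\chi_E-\chi_F\|_{L^1}$ and $\|v_F\|_\infty\|\chi_E-\chi_F\|_{L^1}$, respectively, so that
$$
|\nl_\alpha(E)-\nl_\alpha(F)|\leq\bigl(\|v_E\|_{L^\infty}+\|v_F\|_{L^\infty}\bigr)|E\triangle F|.
$$
Proposition~\ref{prop:ve} furnishes $\|v_E\|_{L^\infty},\|v_F\|_{L^\infty}\leq C$, and the definition of $c_0$ is simply $c_0:=2C$.

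The only subtle point — and what I would flag as the main obstacle — is that Proposition~\ref{prop:ve} is stated with a constant depending on $\alpha$, whereas here the bound must be uniform for $\alpha$ ranging over $(0,\bar\alpha]$. To handle this I would revisit the elementary estimate in the proof of Proposition~\ref{prop:ve}: splitting $\R^N$ into $B_1(x)$ and its complement gives
$$
v_E(x)\leq\int_{B_1}|y|^{-\alpha}\,\dd y+|E|\leq\frac{N\,|B_1|}{N-\bar\alpha}+m\,,
$$
where the last inequality uses the monotonicity $|y|^{-\alpha}\leq|y|^{-\bar\alpha}$ on $B_1$ together with $\alpha\leq\bar\alpha<N-1<N$. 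This shows the bound on $\|v_E\|_\infty$ is uniform in $\alpha\in(0,\bar\alpha]$ and depends only on $N$, $\bar\alpha$, $m$, which is exactly the dependence required in the statement. This completes the strategy.
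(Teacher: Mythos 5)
Your proposal is correct and follows essentially the same route as the paper: the same telescoping decomposition of $\chi_E(x)\chi_E(y)-\chi_F(x)\chi_F(y)$, rewriting the difference as $\int(\chi_E-\chi_F)(v_E+v_F)$, and bounding by $2\|v\|_{L^\infty}|E\triangle F|$ via Proposition~\ref{prop:ve}. Your explicit verification that the $L^\infty$ bound on $v_E$ is uniform for $\alpha\leq\bar\alpha$ is exactly the point the paper disposes of by remarking that the proof of Proposition~\ref{prop:ve} already yields an $\alpha$-independent constant.
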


\begin{proof}
We have that
\begin{align*}
\nl_\alpha(E) - \nl_\alpha(F)
&= \int_{\R^N}\int_{\R^N} \left( \frac{\chi_E(x)(\chi_E(y)-\chi_F(y))}{|x-y|^\alpha}
 + \frac{\chi_F(y)(\chi_E(x)-\chi_F(x))}{|x-y|^\alpha} \right) \, \dd x \dd y \\
&= \int_{E\setminus F} \bigl(v_E(x)+v_F(x)\bigr)\,\dd x - \int_{F\setminus E} \bigl(v_E(x)+v_F(x)\bigr)\,\dd x \\
& \leq \int_{E\triangle F} \bigl(v_E(x)+v_F(x)\bigr)\,\dd x
 \leq 2C\,|E\triangle F|,
\end{align*}
where the constant $C$ is provided by Proposition~\ref{prop:ve},
whose proof shows also that it can be chosen independently of $\alpha\leq\bar{\alpha}$.
\end{proof}

The issue of existence and characterization of \emph{global minimizers} of the problem
\begin{equation} \label{eq:minimumprobl}
\min\left\{\f(E) \,:\, E\subset\R^N, \, |E|=m\right\}\,,
\end{equation}
for $m>0$, is not at all an easy task.
A principal source of difficulty in applying the direct method of the Calculus of Variations
comes from the lack of compactness of the space with respect to $L^1$ convergence of sets
(with respect to which the functional is lower semicontinuous).
It is in fact well known that the minimum problem \eqref{eq:minimumprobl}
does not admit a solution for certain ranges of masses.

Besides the notion of global minimality, we will address also the study of sets which minimize locally the functional
with respect to small $L^1$-perturbations.
By translation invariance, we measure the $L^1$-distance of two sets modulo translations by the quantity
\begin{equation} \label{eq:asimmetria}
\alpha(E,F) := \min_{x\in\R^N} |E\triangle(x+F)|\,.
\end{equation}

\begin{definition} \label{def:minloc}
We say that $E\subset\R^N$ is a \emph{local minimizer} for the functional \eqref{eq:functional}
if there exists $\delta>0$ such that
$$
\f(E)\leq\f(F)
$$
for every $F\subset\R^N$ such that $|F|=|E|$ and $\alpha(E,F)\leq\delta$.
We say that $E$ is an \emph{isolated local minimizer} if the previous inequality is strict whenever $\alpha(E,F)>0$.
\end{definition}

The first order condition for minimality, coming from the first variation of the functional
(see \eqref{eq:firvar}, and also \cite[Theorem~2.3]{ChoSte}),
requires a $C^2$-minimizer $E$ (local or global) to satisfy the Euler-Lagrange equation
\begin{equation} \label{eq:EL}
H_{\partial E}(x) + 2\gamma v_E(x) = \lambda
\qquad\text{for every }x\in\partial E
\end{equation}
for some constant $\lambda$ which plays the role of a Lagrange multiplier associated with the volume constraint.
Here $H_{\partial E}:= \div_{\tau}\nu_E(x)$ denotes the sum of the principal curvatures of $\partial E$
($\div_{\tau}$ is the tangential divergence on $\partial E$, see \cite[Section~7.3]{AFP}).
Following \cite{AceFusMor}, we define \emph{critical sets} as those satisfying \eqref{eq:EL} in a weak sense,
for which further regularity can be gained \emph{a posteriori} (see Remark~\ref{rm:regularitycrit}).

\begin{definition} \label{def:crit}
We say that $E\subset\R^N$ is a \emph{regular critical set} for the functional \eqref{eq:functional}
if $E$ is a bounded set of class $C^1$ and \eqref{eq:EL} holds weakly on $\partial E$, \emph{i.e.},
\begin{equation*}
\int_{\partial E}\div_{\tau}\zeta\,\dd \hn = -2\gamma\int_{\partial E}v_E\,\langle\zeta,\nu_E\rangle\,\dd \hn
\end{equation*}
for every $\zeta\in C^{1}(\R^N;\R^N)$ such that $\int_{\partial E}\langle\zeta,\nu_E\rangle\,\dd\hn=0$.
\end{definition}

\begin{remark} \label{rm:regularitycrit}
By Proposition~\ref{prop:ve} and by standard regularity (see, \emph{e.g.}, \cite[Proposition~7.56 and Theorem~7.57]{AFP})
a critical set $E$ is of class $W^{2,2}$ and $C^{1,\beta}$ for all $\beta\in(0,1)$.
In turn, recalling Proposition~\ref{prop:ve},
by Schauder estimates (see \cite[Theorem~9.19]{GT})
we have that $E$ is of class $C^{3,\beta}$ for all $\beta\in(0,N-\alpha-1)$.
\end{remark}

We collect in the following theorem some regularity properties of local and global minimizers,
which are mostly known (see, for instance, \cite{KnuMur2,LuOtt,SteTop} for global minimizers,
and \cite{AceFusMor} for local minimizers in a periodic setting).
The basic idea is to show that a minimizer solves a suitable penalized minimum problem,
where the volume constraint is replaced by a penalization term in the functional,
and to deduce that a quasi-minimality property is satisfied (see Definition~\ref{def:quamin}).

\begin{theorem} \label{teo:regmin}
Let $E\subset\R^N$ be a global or local minimizer for the functional \eqref{eq:functional} with volume $|E|=m$.
Then the reduced boundary $\partial^*E$ is a $C^{3,\beta}$-manifold for all $\beta<N-\alpha-1$,
and the Hausdorff dimension of the singular set satisfies $\dim_{\mathcal{H}}(\partial E \setminus \partial^*E)\leq N-8$.
Moreover, $E$ is (essentially) bounded.
Finally, every global minimizer is connected, and every local minimizer has at most a finite number of connected components
\footnote{Here and in the rest of the paper \emph{connectedness} is intended in a measure-theoretic sense:
a \emph{connected component} of $E$ is defined as any subset $F\subset E$ with $|F|>0$, $|E\setminus F|>0$, such that $\p(E)=\p(F)+\p(E\setminus F)$.}.
\end{theorem}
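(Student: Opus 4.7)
The plan is to reduce the regularity problem to the well-developed theory of quasi-minimizers of the perimeter, by getting rid of the volume constraint through a penalization. Concretely, I would fix a (local or global) minimizer $E$ with $|E|=m$ and consider the penalized functionals
\begin{equation*}
\f_\Lambda(F) := \f(F) + \Lambda\bigl||F|-m\bigr|.
\end{equation*}
Using the Lipschitz estimate of Proposition~\ref{prop:NLlipschitz} (which shows that the nonlocal term behaves well on $L^1$) together with a standard competitor argument (adding or removing a small ball to adjust the measure, as in \cite{KnuMur2,AceFusMor}), one shows that for $\Lambda$ sufficiently large $E$ is in fact an unconstrained minimizer of $\f_\Lambda$ among sets in a small $L^1$-neighborhood. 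Invoking again Proposition~\ref{prop:NLlipschitz}, one then reads off that $E$ is an $(\omega,r_0)$-minimizer of the perimeter: for every ball $B_r(x)$ with $r\leq r_0$ and every $F$ with $F\triangle E\Subset B_r(x)$ one has $\p(E)\leq\p(F)+\omega r^N$.

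From here the classical regularity theory of Almgren/Tamanini applies: the reduced boundary $\partial^*E$ is a $C^{1,\gamma}$-manifold for some $\gamma\in(0,1/2)$ and $\dim_{\mathcal{H}}(\partial E\setminus\partial^*E)\leq N-8$. On $\partial^*E$ the Euler--Lagrange equation \eqref{eq:EL} then holds in the classical sense; combined with Proposition~\ref{prop:ve} (which gives $v_E\in C^{1,\beta}$ for every $\beta<N-\alpha-1$) and Schauder theory (\cite[Theorem~9.19]{GT}), one bootstraps $\partial^*E\in C^{2,\beta}$ and then $C^{3,\beta}$ for all $\beta<N-\alpha-1$, exactly as in Remark~\ref{rm:regularitycrit}.

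For essential boundedness I would argue by contradiction using the density estimates available for quasi-minimizers. If $E$ were unbounded, one could find a sequence of unit balls $B_1(x_k)$ with $|x_k|\to\infty$ and $|E\cap B_1(x_k)|\geq c>0$; translating the mass contained in such a far-away piece onto a single large ball far from the rest of $E$ leaves the perimeter unchanged and decreases the nonlocal term (because points get further apart), contradicting minimality after adjusting the volume via the penalization. A cleaner variant uses the truncation scheme of \cite{KnuMur2,LuOtt}: choose $R$ large, compare $E$ with $E\cap B_R$ and with a suitable translate, and exploit the decay of $v_E$ at infinity.

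Finally, for the connectedness statements, the crucial fact is that separating two disjoint components $E_1,E_2$ by a translation $E_1\cup(E_2+\tau v)$ leaves the perimeter unchanged and strictly decreases $\nl_\alpha$ as $|\tau|\to\infty$. For a global minimizer this immediately forbids the existence of two components, since the translated configuration is admissible. For a local minimizer the same trick shows that any two components must lie within a uniformly bounded distance of each other; combined with the boundedness of $E$ and the uniform lower bound $|F|\geq c_N(\p(F))^{N/(N-1)}$ on the volume of each component (which forces each component to carry a definite amount of mass because of the $(\omega,r_0)$-minimality), only finitely many components can coexist. I expect the main technical obstacle to be establishing essential boundedness in the full range $\alpha\in(0,N-1)$: the slow decay of $v_E$ makes the comparison argument delicate, and one has to be careful that adjusting the mass via penalization does not overwhelm the gain obtained by pushing the far-away piece to infinity.
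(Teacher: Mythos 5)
Your overall architecture coincides with the paper's: penalize the volume constraint using Proposition~\ref{prop:NLlipschitz}, deduce $(\omega,r_0)$-minimality, invoke Tamanini's regularity and the dimension bound on the singular set, and bootstrap to $C^{3,\beta}$ via the Euler--Lagrange equation and Schauder theory as in Remark~\ref{rm:regularitycrit}. (The paper removes the constraint by rescaling $F_h:=\lambda_h E_h$ rather than by adding/removing a small ball, but both are standard.) The connectedness of global minimizers and the finiteness of components for local minimizers are also argued as in the paper: the key point for finiteness is the uniform lower bound on the volume of each component coming from quasi-minimality plus the isoperimetric inequality (note you wrote that inequality backwards --- it is $\p(F)\geq c_N|F|^{(N-1)/N}$, and the comparison of $E$ with $E\setminus E_0$ gives $\p(E_0)\leq\omega|E_0|$, whence $|E_0|\geq c(N,\omega)$); the claim that two components of a \emph{local} minimizer must be at bounded distance is neither needed nor obtainable, since translating a component is a large $L^1$-perturbation.

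The one step where your proposal genuinely goes astray is essential boundedness. Your primary argument (excise a far-away piece and replace it by a ball placed far from the rest) does not leave the perimeter unchanged --- you create new boundary both where you cut and on the new ball --- so the energy comparison is not the clean ``perimeter equal, nonlocal term decreases'' you describe; and the worry about the slow decay of $v_E$ is a misdiagnosis of where the difficulty lies. No competitor construction is needed at all: once $E$ is an $(\omega,r_0)$-minimizer, the lower perimeter density estimate $\p(E;B_r(y))\geq\vartheta_0 r^{N-1}$ at points $y\in\partial^*E$ shows that if $E$ were essentially unbounded one could find infinitely many disjoint balls each carrying perimeter at least $\vartheta_0 r^{N-1}$, contradicting $\p(E)<\infty$. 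This is the paper's argument, it is uniform in $\alpha$, and it replaces the step you flagged as the main technical obstacle.
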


\begin{proof}
We divide the proof into three steps, following the ideas contained in \cite[Proposition~2.7 and Theorem~2.8]{AceFusMor}
in the first part.

\smallskip
\noindent
{\it Step 1.}
We claim that there exists $\Lambda>0$ such that $E$ is a solution to the penalized minimum problem
\begin{equation*}
\min\left\{\f(F) + \Lambda \big||F|-|E|\big| \,:\, F\subset\R^N, \, \alpha(F,E)\leq\frac{\delta}{2}\right\}\,,
\end{equation*}
where $\delta$ is as in Definition~\ref{def:minloc}
(the obstacle $\alpha(F,E)\leq\frac{\delta}{2}$ is not present in the case of a global minimizer).
To obtain this, it is in fact sufficient to show that there exists $\Lambda>0$ such that
if $F\subset\R^N$ satisfies $\alpha(F,E)\leq\frac{\delta}{2}$ and $\f(F)+\Lambda\big||F|-|E|\big|\leq\f(E)$,
then $|F|=|E|$.

Assume by contradiction that there exist sequences $\Lambda_h\to+\infty$ and $E_h\subset\R^N$
such that $\alpha(E_h,E)\leq\frac{\delta}{2}$, $\f(E_h)+\Lambda_h\big||E_h|-|E|\big|\leq\f(E)$, and $|E_h|\neq|E|$.
Notice that, since $\Lambda_h\to+\infty$, we have $|E_h|\to|E|$.

We define new sets $F_h:=\lambda_hE_h$, where $\lambda_h=\left(\frac{|E|}{|E_h|}\right)^{\frac1N}\to1$, so that $|F_h|=|E|$.
Then we have, for $h$ sufficiently large, that $\alpha(F_h,E)\leq\delta$ and
\begin{align*}
\f(F_h)
& = \f(E_h) 
  + (\lambda_h^{N-1}-1)\p(E_h) + \gamma(\lambda_h^{2N-\alpha}-1)\nl_\alpha(E_h)\\
& \leq \f(E)
  + (\lambda_h^{N-1}-1)\p(E_h) + \gamma(\lambda_h^{2N-\alpha}-1)\nl_\alpha(E_h) -\Lambda_h \big||E_h|-|E|\big| \\
& = \f(E) + |\lambda_h^N-1|\,|E_h|
  \left( \frac{\lambda_h^{N-1}-1}{|\lambda_h^N-1|}\,\frac{\p(E_h)}{|E_h|}
  + \gamma\,\frac{\lambda_h^{2N-\alpha}-1}{|\lambda_h^N-1|}\,\frac{\nl_\alpha(E_h)}{|E_h|} - \Lambda_h \right)
  < \f(E)\,,
\end{align*}
which contradicts the local minimality of $E$
(notice that the same proof works also in the case of global minimizers).

\smallskip
\noindent
{\it Step 2.}
From the previous step, it follows that $E$ is an $(\omega,r_0)$-minimizer for the area functional
for suitable $\omega>0$ and $r_0>0$ (see Definition~\ref{def:quamin}).
Indeed, choose $r_0$ such that $\omega_Nr_0^N\leq\frac{\delta}{2}$:
then if $F$ is such that $F\triangle E\subset\subset B_r(x)$ with $r<r_0$,
we clearly have that $\alpha(F,E)\leq\frac{\delta}{2}$
and by minimality of $E$ we deduce that
\begin{align*}
\p(E)
&\leq \p(F) + \gamma \bigl( \nl_\alpha(F)-\nl_\alpha(E)\bigr) + \Lambda \big||F|-|E|\big|\\
&\leq \p(F) + \bigl( \gamma c_0 + \Lambda \bigr) |E\triangle F|
\end{align*}
(using Proposition~\ref{prop:NLlipschitz}),
and the claim follows with $\omega:=\gamma c_0 + \Lambda$.

\smallskip
\noindent
{\it Step 3.}
The $C^{1,\frac12}$-regularity of $\partial^*E$, as well as the condition on the Hausdorff dimension of the singular set,
follows from classical regularity results for $(\omega,r_0)$-minimizers (see, \emph{e.g.}, \cite[Theorem~1]{Tam}).
In turn, the $C^{3,\beta}$-regularity follows from the Euler-Lagrange equation, as in Remark~\ref{rm:regularitycrit}.

To show the essential boundedness, we use the density estimates for $(\omega,r_0)$-minimizers of the perimeter,
which guarantee the existence of a positive constant $\vartheta_0>0$ (depending only on $N$) such that
for every point $y\in\partial^{*} E$ and $r<\min\{r_0,1/(2N\omega)\}$
\begin{equation} \label{eq:densitylb}
\p(E;B_r(y))\geq\vartheta_0r^{N-1}
\end{equation}
(see, \emph{e.g.}, \cite[Theorem~21.11]{Mag}).
Assume by contradiction that there exists a sequence of points $x_n\in\R^N\setminus E^{(0)}$, where
$$
E^{(0)} := \left\{x\in\R^N : \limsup_{r\to0^+}\frac{|E\cap B_r(x)|}{r^N}=0\right\}\,,
$$
such that $|x_n|\to+\infty$. Fix $r<\min\{r_0,1/(2N\omega)\}$ and assume without loss of generality that $|x_n-x_m|>4r$.
It is easily seen that for infinitely many $n$ we can find $y_n\in\partial^*E\cap B_r(x_n)$; then
\begin{align*}
\p(E) \geq \sum_n\p(E,B_r(y_n)) \geq \sum_n \vartheta_0 r^{N-1} =+\infty\,,
\end{align*}
which is a contradiction.

Connectedness of global minimizers follows easily from their boundedness, since if a global minimizer had at least two connected components
one could move one of them far apart from the others without changing the perimeter but decreasing the nonlocal term in the energy
(see \cite[Lemma~3]{LuOtt} for a formal argument).

Finally, let $E_0$ be a connected component of a local minimizer $E$: then,
denoting by $B_r$ a ball with volume $|B_r|=|E_0|$,
using the isoperimetric inequality and the fact that $E$ is a $(\omega,r_0)$-minimizer for the area functional, we obtain
\begin{align*}
\p(E\setminus E_0) + N\omega_Nr^{N-1}
&\leq \p(E\setminus E_0) + \p(E_0)
= \p(E) \\
&\leq \p(E\setminus E_0) +\omega|E_0|
= \p(E\setminus E_0) + \omega\omega_Nr^N,
\end{align*}
which is a contradiction if $r$ is small enough.
This shows an uniform lower bound on the volume of each connected component of $E$,
from which we deduce that $E$ can have at most a finite number of connected components.
\end{proof}

We are now ready to state the main results of this paper.
The central theorem, whose proof lasts for Sections~\ref{sect:2var} and \ref{sect:l1min},
provides a sufficiency local minimality criterion based on the second variation of the functional.
Following \cite{AceFusMor} (see also \cite{ChoSte}), we introduce a quadratic form
associated with the second variation of the functional at a regular critical set (see Definition~\ref{def:fquad});
then we show that its strict positivity
(on the orthogonal complement to a suitable finite dimensional subspace of directions where the second variation degenerates,
due to translation invariance) is a sufficient condition for isolated local minimality, according to Definition~\ref{def:minloc},
by proving a \emph{quantitative} stability inequality.
The result reads as follows.

\begin{theorem} \label{teo:minl1}
Assume that $E$ is a regular critical set for $\f$ with positive second variation, in the sense of Definition~\ref{def:varpos}.
Then there exist $\delta>0$ and $C>0$ such that
\begin{equation}\label{eq:quant}
\f(F)\geq\f(E)+C\bigl(\alpha(E,F)\bigr)^2
\end{equation}
for every $F\subset\R^N$ such that $|F|=|E|$ and $\alpha(E,F)<\delta$.
\end{theorem}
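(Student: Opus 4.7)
My plan is to follow the two-step scheme hinted at in the introduction. First, I establish the local minimality of $E$ against competitors whose boundary is a normal graph over $\partial E$ with small $W^{2,p}$-norm (this is the content of Theorem~\ref{teo:minw}). Second, I upgrade this to $L^1$-local minimality by a contradiction/selection argument in the spirit of Cicalese--Leonardi, using regularity of quasi-minimizers of the area functional to turn approximate $L^1$-minimizers into small $W^{2,p}$ normal graphs over $\partial E$. The quadratic constant in \eqref{eq:quant} will come from coercivity of the second variation quadratic form on the orthogonal complement of the translation directions (where it degenerates, as encoded in Definition~\ref{def:varpos}).

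\textbf{First step ($W^{2,p}$-local minimality).} Given a competitor $F$ whose boundary is the normal graph of a function $\psi$ over $\partial E$ with $\|\psi\|_{W^{2,p}}$ small, I connect $E$ to $F$ through a volume-preserving one-parameter family $E_t$ with $\partial E_t = \{x + h(t,x)\nu_E(x) : x\in \partial E\}$, $E_0 = E$, $E_1 = F$. Taylor expansion yields
\begin{equation*}
\f(F) - \f(E) = \int_0^1 (1-t)\,\partial_{tt} \f(E_t)\, \dd t,
\end{equation*}
since the first derivative vanishes at $t=0$ by \eqref{eq:EL}. Up to lower order terms controlled by $\|\psi\|_{W^{2,p}}$, the integrand is the second variation quadratic form at $E_t$ applied to $\partial_t h(t,\cdot)$. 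Proposition~\ref{prop:ve} ensures that the nonlocal contribution depends continuously on the set in $C^{1,\beta}$, so that strict positivity of the form at $E$ propagates to all $E_t$ when $\psi$ is small. After decomposing the normal perturbation into a component along translation directions (which can be absorbed into the infimum in \eqref{eq:asimmetria}) and an orthogonal remainder, coercivity yields $\f(F) - \f(E) \geq c\|\psi\|_{H^1(\partial E)}^2$, which in turn controls $\alpha(E,F)^2$.

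\textbf{Second step (passage to $L^1$).} Suppose by contradiction that there exist $F_h$ with $|F_h| = |E|$, $\alpha(E,F_h) \to 0^+$ and $\f(F_h) - \f(E) < \frac{1}{h}\alpha(E,F_h)^2$. Inspired by Step~1 in the proof of Theorem~\ref{teo:regmin}, I replace each $F_h$ by a minimizer $\widetilde F_h$ of a suitably penalized functional that drops the hard volume constraint in favor of a term $\Lambda\big||G|-|E|\big|$ and incorporates the asymmetry $\alpha(G,E)$, calibrated so that $\widetilde F_h$ still violates the sought quadratic estimate while being uniformly $(\omega, r_0)$-quasi-minimal for the area functional. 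By the density estimates \eqref{eq:densitylb} and the essential boundedness derived as in Theorem~\ref{teo:regmin}, up to translations $\widetilde F_h$ is confined in a fixed ball and converges to $E$ in $L^1$. Tamanini's regularity theorem then promotes $L^1$-convergence to $C^{1,\beta}$-convergence of $\partial \widetilde F_h$ to $\partial E$, and Schauder estimates applied to the Euler--Lagrange equation \eqref{eq:EL} (with nonlocal source controlled via Proposition~\ref{prop:ve}) upgrade this further to $W^{2,p}$-convergence. For $h$ large, $\widetilde F_h$ is then a $W^{2,p}$ normal graph over $\partial E$ of arbitrarily small norm, contradicting the first step.

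\textbf{Main obstacle.} The delicate point is calibrating the penalized problem so that its minimizers $\widetilde F_h$ inherit both the quantitative failure of \eqref{eq:quant} from $F_h$ and the uniform quasi-minimality needed to invoke regularity: too strong a penalty forces $\widetilde F_h = F_h$ and loses the benefit of minimization, whereas too weak a penalty drives $\widetilde F_h$ to $E$ itself and destroys the contradiction. Added to this, the non-compactness of $\R^N$ (absent in the periodic setting of \cite{AceFusMor}) requires systematically measuring distances via the translation-invariant quantity \eqref{eq:asimmetria} and combining the density/regularity estimates with the a priori boundedness in order to extract convergent subsequences while preserving the quantitative counter-example.
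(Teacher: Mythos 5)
Your proposal follows essentially the same two-step route as the paper: $W^{2,p}$-local minimality via coercivity of the second variation (Theorem~\ref{teo:minw}), followed by the penalization/quasi-minimizer-regularity contradiction argument to upgrade to $L^1$. The calibration issue you flag as the main obstacle is exactly what the paper resolves by minimizing $\f(F)+\Lambda_1\sqrt{(\alpha(F,E)-\e_h)^2+\e_h}+\Lambda_2\big||F|-|E|\big|$ over $F\subset B_{R_h}$: near $t=\e_h$ the penalty has Lipschitz constant only $2\sqrt{\e_h}$, soft enough to yield an approximate Euler--Lagrange equation (hence $W^{2,p}$-convergence of the minimizers to $E$ as normal graphs), yet with $\Lambda_1>C_E+c_0\gamma$ it is strong enough to force $\alpha(F_h,E)/\e_h\to1$, so the minimizers neither collapse onto $E$ nor lose the quantitative violation of \eqref{eq:quant}.
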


The local minimality criterion in Theorem~\ref{teo:minl1} can be applied
to obtain information about local and global minimizers of the functional \eqref{eq:functional}.
In order to state the results more clearly, we will underline the dependence of the functional on the parameters $\alpha$ and $\gamma$
by writing $\f_{\alpha,\gamma}$ instead of $\f$.
We start with the following theorem, which shows the existence of a critical mass $\mloc$ such that
the ball $B_R$ is an isolated local minimizer if $|B_R|<\mloc$,
but is no longer a local minimizer for larger masses.
We also determine explicitly the volume threshold in the three-dimensional case.
The result, which to the best of our knowledge provides the first characterization of the local minimality of the ball,
will be proved in Section~\ref{sect:ball}.

\begin{theorem}[Local minimality of the ball] \label{teo:minlocpalla}
Given $N\geq 2$, $\alpha\in(0,N-1)$ and $\gamma>0$,
there exists a critical threshold $\mloc=\mloc(N,\alpha,\gamma)>0$ such that
the ball $B_R$ is an isolated local minimizer for $\f_{\alpha,\gamma}$, in the sense of Definition~\ref{def:minloc},
if $0<|B_R|<\mloc$.

If $|B_R|>\mloc$, there exists $E\subset\R^N$ with $|E|=|B_R|$ and $\alpha(E,B_R)$ arbitrarily small
such that $\f_{\alpha,\gamma}(E)<\f_{\alpha,\gamma}(B_R)$.

In particular, in dimension $N=3$ we have
$$
\mloc(3,\alpha,\gamma)=\frac43\,\pi\Biggl(\frac{(6-\alpha)(4-\alpha)}{2^{3-\alpha}\gamma\alpha\pi}\Biggr)^{\frac{3}{4-\alpha}}.
$$

Finally $\mloc(N,\alpha,\gamma)\to\infty$ as $\alpha\to0^+$.
\end{theorem}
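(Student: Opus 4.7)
The strategy is to apply the local minimality criterion of Theorem~\ref{teo:minl1} to the ball $B_R$. I would first verify that $B_R$ is a regular critical set in the sense of Definition~\ref{def:crit}: by rotational symmetry both $H_{\partial B_R}=(N-1)/R$ and $v_{B_R}|_{\partial B_R}$ are constants, so \eqref{eq:EL} holds trivially. The problem then reduces to deciding for which $R$ the quadratic form $\partial^2\f(B_R)$ is strictly positive after quotienting out constants (volume) and coordinate functions (translations).

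Thanks to the $O(N)$-invariance of both $\p$ and $\nl_\alpha$, this quadratic form diagonalizes in the basis of spherical harmonics. For $\varphi=\sum_{k,i}a_{k,i}Y_k^{(i)}\in H^1(\partial B_R)$ I expect an identity of the form
\[
\partial^2\f(B_R)[\varphi]=\sum_{k\ge 1}\lambda_k(R)\sum_i a_{k,i}^2, \qquad \lambda_k(R)=\frac{k(k+N-2)-(N-1)}{R^2}+\gamma\,R^{N-1-\alpha}\mu_k(\alpha,N),
\]
where the perimeter contribution reflects $|B|^2=(N-1)/R^2$ and the eigenvalues $k(k+N-2)$ of the Laplace--Beltrami operator on $S^{N-1}$, while $\mu_k(\alpha,N)$ is the Fourier coefficient of the nonlocal Hessian, to be extracted via the Funk--Hecke formula applied to the kernel $|\omega-\omega'|^{-\alpha}$ on $S^{N-1}$ together with an explicit radial evaluation of the Riesz potential of the ball. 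The mode $k=0$ is killed by the volume constraint, and the $k=1$ modes span the infinitesimal translations, which are precisely the directions quotiented out in Definition~\ref{def:varpos}; thus strict positivity amounts to $\lambda_k(R)>0$ for every $k\ge 2$.

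The key structural fact is that $\mu_k(\alpha,N)<0$ for every $k\ge 2$, reflecting the Riesz rearrangement inequality asserting that $\nl_\alpha$ is maximized by the ball. Since the perimeter part scales as $R^{-2}$ while the negative nonlocal part scales as $R^{N-1-\alpha}$ with $N-1-\alpha>0$, each $\lambda_k$ is positive for small $R$ and vanishes at a unique threshold
\[
R_k^{N+1-\alpha}=\frac{k(k+N-2)-(N-1)}{\gamma\,|\mu_k(\alpha,N)|}.
\]
A monotonicity check in $k$ shows $R_2\le R_k$ for all $k\ge 3$, so I would set $\mloc:=|B_{R_2}|$ and invoke Theorem~\ref{teo:minl1} to obtain the isolated local minimality for $|B_R|<\mloc$. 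For $|B_R|>\mloc$ I would choose $\varphi$ a single degree-two spherical harmonic and construct a volume-preserving normal flow $E_t$ of $\partial B_R$ along $\varphi$; since $B_R$ is critical the first variation of $\f_{\alpha,\gamma}$ vanishes and the second variation along this direction is strictly negative, so a Taylor expansion yields $\f_{\alpha,\gamma}(E_t)<\f_{\alpha,\gamma}(B_R)$ for small $t>0$, with $\alpha(E_t,B_R)\to 0$ as $t\to 0$.

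The explicit $N=3$ value follows from evaluating $\mu_2(\alpha,3)$ via the Funk--Hecke identity against the degree-two Legendre polynomial on $S^2$, which reduces to elementary Beta integrals; substituting into $R_2^{4-\alpha}=4/[\gamma|\mu_2(\alpha,3)|]$ together with $|B_R|=\tfrac{4}{3}\pi R^3$ reproduces the announced closed form. For the limit $\alpha\to 0^+$ I would exploit that $|x-y|^{-\alpha}\to 1$ uniformly on compact sets, so that $\nl_\alpha$ becomes asymptotically constant under the volume constraint; in particular $\mu_k(\alpha,N)=O(\alpha)$ as $\alpha\to 0^+$ for each fixed $k\ge 2$, which forces $R_2\to\infty$ and hence $\mloc\to\infty$. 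I expect the main technical obstacle to be the closed-form evaluation of $\mu_k(\alpha,N)$ and the monotonicity verification $R_2=\min_{k\ge 2}R_k$; the remaining steps are structural applications of Theorem~\ref{teo:minl1} combined with scaling.
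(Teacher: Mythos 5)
Your overall architecture coincides with the paper's: check criticality of the ball, diagonalize $\partial^2\f(B_R)$ in spherical harmonics via the Funk--Hecke formula, apply Theorem~\ref{teo:minl1} for the positive range and the necessity of nonnegative second variation (Corollary~\ref{cor:minloc}) for the negative range, then compute the $N=3$ case explicitly. However, there is a genuine gap at the crucial step where you identify the threshold: you assert that $R_2\le R_k$ for all $k\ge 3$ and set $\mloc:=|B_{R_2}|$. The paper does not prove this and in fact organizes the argument precisely so as not to need it. Writing the mode-$d$ coefficient as $d(d+N-2)-(N-1)+2\gamma R^{N+1-\alpha}\bigl(\mu_d^{N,\alpha}-\alpha\mathcal{I}^{N,\alpha}\bigr)$, the paper shows that the threshold radius $g^{N,\alpha}(d)$ is \emph{first decreasing and then increasing} in $d$ (with the turning point $d_I^{N,\alpha}$ characterized by an explicit inequality), so the binding mode is in general $\max\{d_A^{N,\alpha},d_I^{N,\alpha}\}$, and only in the case $N=3$ is it verified that this equals $2$. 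If $d_I^{N,\alpha}>2$ for some $(N,\alpha)$, then $R_{d_I}<R_2$ and your threshold $|B_{R_2}|$ is strictly too large: for $R\in(R_{d_I},R_2)$ the ball is destabilized by a higher harmonic while you would declare it a local minimizer. You flag this monotonicity as a "technical obstacle", but it is not a routine verification --- it is the heart of identifying $\mloc$, and the unimodality analysis (together with $g^{N,\alpha}(d)\to\infty$, which guarantees the minimum over $d$ is attained and positive) must be supplied.

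Two smaller points. First, your claim that the combined nonlocal coefficient is negative for every $k\ge 2$ is correct, but Riesz rearrangement is only a heuristic for it; the clean argument is that translation invariance forces the $k=1$ coefficient to vanish, whence $\mu_1^{N,\alpha}=\alpha\mathcal{I}^{N,\alpha}$, and the strict monotonicity of the Funk--Hecke coefficients $\mu_d^{N,\alpha}\searrow 0$ gives $\mu_d^{N,\alpha}<\alpha\mathcal{I}^{N,\alpha}$ for $d\ge2$ (the paper keeps the two contributions $\mu_d^{N,\alpha}$ and $-\alpha\mathcal{I}^{N,\alpha}$ separate and introduces $d_A^{N,\alpha}$ exactly because this competition must be tracked). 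Second, in the limit $\alpha\to0^+$ your argument "$\mu_k=O(\alpha)$ for each fixed $k$" is not enough to conclude $\mloc\to\infty$ unless you already know the binding mode is $k=2$; the paper instead uses the uniform-in-$d$ bound $d(d+N-2)-(N-1)\ge N+1$ together with $\mu_d^{N,\alpha}>0$ to reduce the whole question to $\alpha\mathcal{I}^{N,\alpha}\to0$, which is independent of the mode.
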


Our local minimality criterion allows us to deduce further properties about global minimizers,
which will be proved in Section~\ref{sect:global}.
The first result states that the ball is the unique global minimizer of the functional for small masses.
We provide an alternative proof of this fact
(which was already known in the literature in some particular cases, as explained in the introduction)
which holds in full generality and removes the restrictions on the parameters $N$ and $\alpha$ which were present in the previous partial results.

\begin{theorem}[Global minimality of the ball] \label{teo:minglobpalla}
Let $\mglob(N,\alpha,\gamma)$ be the supremum of the masses $m>0$ such that the ball of volume $m$ is a global minimizer of $\f_{\alpha,\gamma}$ in $\R^N$.
Then $\mglob(N,\alpha,\gamma)$ is positive and finite,
and the ball of volume $m$ is a global minimizer of $\f_{\alpha,\gamma}$ if $m\leq\mglob(N,\alpha,\gamma)$.
Moreover, it is the unique (up to translations) global minimizer of $\f_{\alpha,\gamma}$ if $m<\mglob(N,\alpha,\gamma)$.
\end{theorem}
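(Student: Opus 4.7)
The argument relies on the scaling identity noted in the introduction: setting $\lambda:=(|B_1|/m)^{1/N}$ and $\tilde\gamma:=\gamma(m/|B_1|)^{(N+1-\alpha)/N}$, one has $\f_{\alpha,\gamma}(E)=\lambda^{-(N-1)}\tilde\f_{\tilde\gamma}(\lambda E)$ for every $E$ with $|E|=m$, where $\tilde\f_{\tilde\gamma}(F):=\p(F)+\tilde\gamma\nl_\alpha(F)$. Hence $B_m$ is a global minimizer of $\f_{\alpha,\gamma}$ on $\{|E|=m\}$ if and only if $B_1$ is a global minimizer of $\tilde\f_{\tilde\gamma}$ on $\{|F|=|B_1|\}$, and the map $m\mapsto\tilde\gamma$ is an increasing bijection of $(0,+\infty)$. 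I carry out the whole argument in these rescaled variables.

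\emph{Finiteness.} To show $\mglob<\infty$ I would test $B_m$ against the two-ball competitor $E_R:=B_{m/2}(0)\cup B_{m/2}(Re_1)$ and let $R\to+\infty$: elementary scaling gives $\p(E_R)=2^{1/N}\p(B_m)$, while the cross-interaction decays like $R^{-\alpha}$, so $\nl_\alpha(E_R)\to 2\nl_\alpha(B_{m/2})=2^{-(N-\alpha)/N}\nl_\alpha(B_m)$. Since $\p(B_m)$ grows like $m^{(N-1)/N}$ while $\nl_\alpha(B_m)$ grows like $m^{(2N-\alpha)/N}$ and $(2N-\alpha)/N>(N-1)/N$, for $m$ large the nonlocal gain dominates the perimeter loss and $\f_{\alpha,\gamma}(E_R)<\f_{\alpha,\gamma}(B_m)$.

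\emph{Positivity.} For $\mglob>0$ I argue by contradiction. Assume $\tilde\gamma_h\to 0^+$ and sets $F_h$, not translates of $B_1$, with $|F_h|=|B_1|$ and $\tilde\f_{\tilde\gamma_h}(F_h)\le\tilde\f_{\tilde\gamma_h}(B_1)$. Dropping the nonnegative term $\tilde\gamma_h\nl_\alpha(F_h)$ yields $\p(F_h)\to\p(B_1)$, so $\{F_h\}$ is an isoperimetric minimizing sequence at fixed volume. A concentration-compactness argument, in which splitting is ruled out by the strict subadditivity of $t\mapsto t^{(N-1)/N}$ combined with the sharp isoperimetric inequality, gives $F_h\to B_1$ in $L^1$ up to translations. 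By Theorem~\ref{teo:minlocpalla}, for $\tilde\gamma_h$ small $B_1$ is a regular critical set with positive second variation for $\tilde\f_{\tilde\gamma_h}$, so Theorem~\ref{teo:minl1} yields $\tilde\f_{\tilde\gamma_h}(F_h)\ge\tilde\f_{\tilde\gamma_h}(B_1)+C\alpha(F_h,B_1)^2>\tilde\f_{\tilde\gamma_h}(B_1)$, a contradiction. The main obstacle here is to ensure that the constants $C$ and $\delta$ in Theorem~\ref{teo:minl1} can be chosen uniformly as $\tilde\gamma_h\to 0^+$; this uniformity has to be extracted from its proof, and rests on the fact that at $\tilde\gamma=0$ the second variation reduces to the strictly positive second variation of the pure perimeter at $B_1$ and depends continuously on $\tilde\gamma$.

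\emph{Monotonicity, uniqueness and attainment.} These follow from a single rescaling observation combined with Riesz's rearrangement inequality. Suppose $\tilde\gamma_0<\tilde\gamma_1$ and $B_1$ is a global minimizer of $\tilde\f_{\tilde\gamma_1}$; if $F$ with $|F|=|B_1|$ satisfied $\tilde\f_{\tilde\gamma_0}(F)\le\tilde\f_{\tilde\gamma_0}(B_1)$, rearranging gives $\p(F)-\p(B_1)\le\tilde\gamma_0(\nl_\alpha(B_1)-\nl_\alpha(F))$, and by Riesz the right-hand side is nonnegative. Replacing $\tilde\gamma_0$ with $\tilde\gamma_1$ preserves the inequality, so $F$ is also a minimizer at $\tilde\gamma_1$; if $F$ is moreover not a translate of $B_1$, the strict isoperimetric inequality forces $\nl_\alpha(B_1)-\nl_\alpha(F)>0$ and the upgrade becomes strict, contradicting minimality at $\tilde\gamma_1$. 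This single computation simultaneously shows that $B_m$ is a global minimizer for every $m\le\mglob$ and the uniqueness statement for $m<\mglob$. The fact that $B_{\mglob}$ is itself a global minimizer is obtained by passing to the limit $m_h\nearrow\mglob$ along a sequence with $B_{m_h}$ global minimizer, applied to rescaled competitors $E_h:=(m_h/\mglob)^{1/N}E$ of any $E$ with $|E|=\mglob$, using continuity of $\f_{\alpha,\gamma}$ under dilations.
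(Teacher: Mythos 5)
Your overall architecture (scaling reduction to a one-parameter family in $\tilde\gamma$, two-ball competitor for finiteness, contradiction plus the local minimality criterion for positivity, and the monotonicity-in-$\gamma$ comparison for uniqueness) matches the paper's proof, and the finiteness and uniqueness/monotonicity/attainment parts are correct as written. The one genuine gap is exactly the one you flag yourself in the positivity step: Theorem~\ref{teo:minl1} is applied at a parameter $\tilde\gamma_h\to0$, and its constants $\delta$ and $C$ a priori depend on $\tilde\gamma_h$. You propose to resolve this by re-proving Theorem~\ref{teo:minl1} with uniform constants; that route is viable (the paper does something of this sort in Lemma~\ref{lemma:uniformity}, but for uniformity in $\alpha$, not $\gamma$, and it takes real work), but it is not needed here. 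The paper closes the gap with precisely the comparison trick you already wrote down in your last paragraph: fix one $\gamma_0>0$ small enough that $B_1$ has positive second variation for $\f_{\alpha,\gamma_0}$, obtain from Theorem~\ref{teo:minl1} a single radius $\delta$ of isolated local minimality at $\gamma_0$, and then observe that if some $E$ with $0<\alpha(E,B_1)<\delta$ satisfied $\f_{\alpha,\gamma}(E)\le\f_{\alpha,\gamma}(B_1)$ for some $\gamma<\gamma_0$, then $\p(E)>\p(B_1)$ forces $\nl_\alpha(E)<\nl_\alpha(B_1)$, whence $\p(E)-\p(B_1)\le\gamma(\nl_\alpha(B_1)-\nl_\alpha(E))<\gamma_0(\nl_\alpha(B_1)-\nl_\alpha(E))$, contradicting local minimality at $\gamma_0$. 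Thus the same $\delta$ works for all $\gamma<\gamma_0$, and no uniformity needs to be extracted from the proof of Theorem~\ref{teo:minl1}. You should restructure the positivity step around this observation rather than leaving the uniformity as an open obstacle.

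A secondary simplification: to show $\alpha(F_h,B_1)\to0$ you invoke concentration-compactness for an isoperimetric minimizing sequence, which is heavier than necessary and itself requires some care (ruling out vanishing and dichotomy). The paper gets this in one line from the quantitative isoperimetric inequality together with the Lipschitz estimate for the nonlocal term (Proposition~\ref{prop:NLlipschitz}): translating so that $\alpha(F_h,B_1)=|F_h\triangle B_1|$, one has $C(N)|F_h\triangle B_1|^2\le\p(F_h)-\p(B_1)\le\tilde\gamma_h(\nl_\alpha(B_1)-\nl_\alpha(F_h))\le\tilde\gamma_h c_0|F_h\triangle B_1|$, hence $|F_h\triangle B_1|\le c_0\tilde\gamma_h/C(N)\to0$. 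Finally, note that your explicit treatment of the endpoint mass $m=\mglob$ (passing to the limit along $m_h\nearrow\mglob$ with rescaled competitors) is a point the paper's proof leaves implicit, so keeping it is a small improvement rather than a redundancy.
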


In the following theorems we analyze the global minimality issue for $\alpha$ close to 0, showing in particular that in this case the unique minimizer, as long as a minimizer exists,
is the ball, and characterizing the infimum of the energy when the problem does not have a solution.

\begin{theorem}[Characterization of global minimizers for $\alpha$ small] \label{teo:apiccolo}
There exists $\bar{\alpha}=\bar{\alpha}(N,\gamma)>0$ such that for every $\alpha<\bar{\alpha}$
the ball with volume $m$ is the unique (up to translations) global minimizer of $\f_{\alpha,\gamma}$ if $m\leq\mglob(N,\alpha,\gamma)$,
while for $m>\mglob(N,\alpha,\gamma)$ the minimum problem for $\f_{\alpha,\gamma}$ does not have a solution.
\end{theorem}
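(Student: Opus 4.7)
The strategy is to reduce the minimization of $\f_{\alpha,\gamma}$ at mass $m$ to an optimization over finite partitions $m=m_1+\cdots+m_k$ realized by balls of volumes $m_1,\dots,m_k$ sent infinitely apart, using the fact that for $\alpha\to 0^+$ the nonlocal term $\nl_\alpha(F)$ is close to $|F|^2$ uniformly on bounded sets, so that $\f_{\alpha,\gamma}$ is a small perturbation of the pure perimeter functional. Once the reduction is in place, Theorem~\ref{teo:minglobpalla}, together with the connectedness of global minimizers from Theorem~\ref{teo:regmin}, gives both conclusions. Below $B_v$ stands for any ball of volume $v$.

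\emph{Step~1 (concentration-compactness).} Let $\{E_n\}$ be a minimizing sequence with $|E_n|=m$. The bound $\f_{\alpha,\gamma}(E_n)\leq \f_{\alpha,\gamma}(B_m)$ controls $\p(E_n)$ uniformly; combined with the $(\omega,r_0)$-quasi-minimality and the density estimate~\eqref{eq:densitylb} employed in the proof of Theorem~\ref{teo:regmin}, this yields a decomposition of each $E_n$ into a uniformly bounded number of essentially connected pieces of uniformly bounded diameter, with mutual distances diverging as $n\to\infty$. Along a subsequence, after translating each piece, we obtain bounded limit sets $F^1,\dots,F^k$ of positive volumes $m^1,\dots,m^k$ with $\sum_i m^i=m$. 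Since the pieces drift apart, the nonlocal cross-interactions vanish in the limit, so
\[
\inf_{|E|=m}\f_{\alpha,\gamma}(E) \;=\; \sum_{i=1}^k \f_{\alpha,\gamma}(F^i),
\]
and by a standard exchange argument each $F^i$ is itself a global minimizer of $\f_{\alpha,\gamma}$ at mass $m^i$.

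\emph{Step~2 (pieces are balls for $\alpha$ small).} For $\alpha<\bar\alpha(N,\gamma)$ small enough, I claim that each $F^i$ equals (a translate of) $B_{m^i}$ and $m^i\leq \mglob(N,\alpha,\gamma)$. Since $F^i$ is bounded, $\nl_\alpha(F^i)\to (m^i)^2$ as $\alpha\to 0^+$ uniformly; combined with the minimality of $F^i$ versus $B_{m^i}$ this forces $\p(F^i)-\p(B_{m^i})=o_\alpha(1)$, and the quantitative isoperimetric inequality then yields $\alpha(F^i,B_{m^i})=o_\alpha(1)$. Because $\mloc(N,\alpha,\gamma)\to+\infty$ as $\alpha\to 0^+$ by Theorem~\ref{teo:minlocpalla}, for $\bar\alpha$ small enough the strict local-minimality estimate~\eqref{eq:quant} applies at $B_{m^i}$ in a neighborhood containing $F^i$, and forces $F^i=B_{m^i}$. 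Since $B_{m^i}$ is then a global minimizer at mass $m^i$, the very definition of $\mglob$ gives $m^i\leq\mglob$.

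\emph{Step~3 (conclusion).} Any actual minimizer $E$ at volume $m$ is connected and bounded by Theorem~\ref{teo:regmin}, hence the decomposition of the trivial minimizing sequence $E_n\equiv E$ contains only $k=1$ piece with $F^1=E$. Step~2 then forces $E=B_m$ (up to translation) and $m\leq\mglob$, proving simultaneously non-existence for $m>\mglob$ and uniqueness for $m\leq\mglob$ (including the critical mass $m=\mglob$, which is the only new point relative to Theorem~\ref{teo:minglobpalla}).

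\emph{Main obstacle.} The delicate part is Step~2: one needs the closeness $\alpha(F^i,B_{m^i})$ and the local-minimality radius $\delta$ from Theorem~\ref{teo:minl1} to be controlled uniformly in all admissible piece-masses $m^i$, possibly via an inductive procedure on the $m^i$. A further subtlety concerns the regime $m^i\geq\mloc$, where the local-minimality criterion alone is insufficient; here one must complement the argument with the energy upper bound $\f_{\alpha,\gamma}(E)\leq\inf_k\sum_i\f_{\alpha,\gamma}(B_{m_i})$ and with a diameter-versus-perimeter estimate for connected sets, in order to rule out large-mass connected minimizers and thereby close the argument.
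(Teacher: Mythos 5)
Your Steps~2--3 are in spirit the paper's own argument (show that any global minimizer is $L^1$-close to the ball of its volume, then invoke strict local minimality of the ball), but the proposal leaves unresolved precisely the point on which the proof hinges, and which you yourself flag as the ``main obstacle'': the radius $\delta$ of the local-minimality neighbourhood in Theorem~\ref{teo:minl1} depends on the critical set, hence on $\alpha$ and on the mass $m^i$ (equivalently, after rescaling to unit volume, on $\gamma$). Your closeness estimate only gives $\alpha(F^i,B_{m^i})=o_\alpha(1)$, so the conclusion $F^i=B_{m^i}$ does not follow if $\delta=\delta(\alpha,m^i)$ shrinks as $\alpha\to0^+$. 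The paper devotes Lemma~\ref{lemma:uniformity} to exactly this: it reruns the chain Theorem~\ref{teo:minw} $\to$ Theorem~\ref{teo:mininf} $\to$ Theorem~\ref{teo:minl1} at the unit ball with constants uniform in $\alpha\leq\bar{\alpha}$ and $\gamma\leq\bar{\gamma}$, which in turn requires the uniform coercivity of the second variation (the infimum \eqref{unifminglob3} over all small $\alpha$) and the $\alpha$-independence of the constants in Propositions~\ref{prop:ve} and \ref{prop:NLlipschitz} and Lemma~\ref{lemma:potenziale}. Without an argument of this type, Step~2 is an assertion, not a proof.

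A second concrete gap: to make ``uniformly in all admissible piece-masses'' meaningful you need an a priori bound, uniform in small $\alpha$, on the volumes for which minimizers can exist; otherwise the effective $\gamma$ after rescaling is unbounded and no single $\bar{\alpha}$ can work, and your fallback for the regime $m^i\geq\mloc$ remains a sketch. The paper supplies this as Proposition~\ref{prop:nonexist} (the non-existence threshold is bounded uniformly for $\alpha\in(0,1)$, via the estimate $\nl_\alpha(E)\gtrsim m\log m$ for connected minimizers); combined with $\mloc\to\infty$ as $\alpha\to0^+$ this removes the large-mass regime entirely. Finally, your Step~1 concentration-compactness decomposition is both unproved (quasi-minimality holds for minimizers, not for arbitrary elements of a minimizing sequence, so one would have to pass to constrained minimizers in large balls first) and unnecessary here: since the theorem only asks you to classify actual minimizers, which are connected and bounded by Theorem~\ref{teo:regmin}, the paper argues by contradiction directly with a putative non-spherical minimizer, rescales to unit volume, shows equiboundedness and $L^1$-closeness to $B_1$ via the quantitative isoperimetric inequality, and contradicts the uniform local minimality. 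The splitting machinery you describe is the content of Theorem~\ref{teo:apiccolo2}, not of this statement.
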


\begin{theorem}[Characterization of minimizing sequences for $\alpha$ small] \label{teo:apiccolo2}
Let $\alpha<\bar{\alpha}$ (where $\bar{\alpha}$ is given by Theorem~\ref{teo:apiccolo}) and let
$$
f_k(m) := \min_{\stackrel{\mu_1,\ldots,\mu_k\geq0}{\mu_1+\ldots+\mu_k=m}} \biggl\{\sum_{j=1}^k \f(B^i) : B^i\text{ \rm ball, }|B^i|=\mu_i \biggr\}.
$$
There exists an increasing sequence $(m_k)_k$, with $m_0=0$, $m_1=\mglob$, such that $\lim_{k}m_k=\infty$ and
\begin{equation} \label{eq:apiccolo2}
\inf_{|E|=m}\f(E) = f_k(m) \qquad\text{for every }m\in[m_{k-1},m_k], \text{ for all } k\in\N,
\end{equation}
that is, for every $m\in[m_{k-1},m_k]$ a minimizing sequence for the total energy is obtained by a configuration of at most $k$ disjoint balls with diverging mutual distance.
Moreover, the number of non-degenerate balls tends to $+\infty$ as $m\to+\infty$.
\end{theorem}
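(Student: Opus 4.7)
The plan is to prove that $\tilde{g}(m):=\inf_{|E|=m}\f(E)$ coincides with $\inf_{k\geq1}f_k(m)$ via a concentration-compactness argument, and then to extract the threshold sequence $(m_k)_k$ from the structure of the family $\{f_k\}$. The assumption $\alpha<\bar{\alpha}$ is used only through Theorem~\ref{teo:apiccolo}, which says that the ball is the unique global minimizer for $m\leq\mglob$ and that no minimizer exists for $m>\mglob$.

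\textbf{Step 1 (reduction to balls via concentration-compactness).} The inequality $\tilde{g}(m)\leq f_k(m)$ is immediate: given any partition $\sum\mu_i=m$ into $k$ balls $B^1,\ldots,B^k$, translating them along disjoint directions by a distance $T$ produces a competitor whose perimeter equals $\sum\p(B^i)$ and whose nonlocal energy, by the decay of the Riesz kernel, converges to $\sum\nl_\alpha(B^i)$ as $T\to\infty$. For the reverse direction take a minimizing sequence $E_n$, which by Theorem~\ref{teo:regmin} may be assumed bounded with finitely many components. A Lions-type dichotomy applied to $\chi_{E_n}$ (vanishing is excluded since $|E_n|=m$) either yields an $L^1$-precompact subsequence (hence a global minimizer, forced by Theorem~\ref{teo:apiccolo} to be a ball, whence $\tilde{g}(m)=f_1(m)$) or produces a splitting $E_n=E_n^1\sqcup E_n^2$ with $\dist(E_n^1,E_n^2)\to\infty$ and $|E_n^i|\to\mu_i\in(0,m)$, $\mu_1+\mu_2=m$. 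In the splitting case, perimeter additivity at positive distance together with the vanishing of the cross term $\int_{E_n^1}\!\int_{E_n^2}|x-y|^{-\alpha}\,\dd x\dd y=O(\dist^{-\alpha}m^2)$ gives
\begin{equation*}
\tilde{g}(m)=\lim_{n}\f(E_n)\geq\tilde{g}(\mu_1)+\tilde{g}(\mu_2),
\end{equation*}
which together with the opposite inequality from the upper-bound argument yields $\tilde{g}(m)=\tilde{g}(\mu_1)+\tilde{g}(\mu_2)$. Inducting on mass (each $\mu_i<m$) then gives $\tilde{g}(\mu_i)=f_{k_i}(\mu_i)$ for some $k_i$, so $\tilde{g}(m)=f_{k_1+k_2}(m)$, proving $\tilde{g}(m)=\min_k f_k(m)$.

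\textbf{Step 2 (threshold sequence).} Allowing one mass to vanish gives $f_{k+1}\leq f_k$, hence $A_k:=\{m>0:f_k(m)=\tilde{g}(m)\}$ is increasing in $k$; set $m_k:=\sup A_k$. Theorem~\ref{teo:apiccolo} identifies $A_1=(0,\mglob]$, so $m_1=\mglob$. Step~1 ensures that every $m$ belongs to some $A_k$, so $\bigcup_k A_k=(0,\infty)$ and $m_k\to\infty$. To conclude $\tilde{g}=f_k$ on the full interval $[m_{k-1},m_k]$ one must show each $A_k$ is an initial interval $(0,m_k]$, equivalently that $k^\ast(m):=\min\{k:m\in A_k\}$ is non-decreasing. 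This is where the explicit scaling $\f(B_\mu)=a\mu^{(N-1)/N}+b\mu^{(2N-\alpha)/N}$ enters: a direct computation shows that the continuous minimizer $k$ of $k\mapsto k\f(B_{m/k})$ is proportional to $m$, and a mass-rearrangement argument (legitimate because Step~1 confines every optimal partition to balls of mass $\leq\mglob$) transfers this monotonicity to the integer-valued $k^\ast$, yielding $A_k=(0,m_k]$. The divergence of the number of non-degenerate balls as $m\to\infty$ follows at once: since every optimal partition uses balls of mass $\leq\mglob$, at least $\lceil m/\mglob\rceil$ pieces are required.

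The principal obstacle is the concentration-compactness dichotomy in Step~1: one has to excise a thin spherical annulus on which $E_n$ has small trace while simultaneously controlling the perimeter loss (via coarea) and the nonlocal cross-term decay (via the Riesz kernel bound), so that the splitting estimate $\f(E_n)\geq\f(E_n^1)+\f(E_n^2)-o(1)$ is sufficiently quantitative to pass to the limit. A secondary and more combinatorial difficulty is showing that $A_k$ is an interval in Step~2, which rests on the non-convex scaling of $\mu\mapsto\f(B_\mu)$ and the uniform bound $\mu\leq\mglob$ on the optimal masses provided by Theorem~\ref{teo:apiccolo}.
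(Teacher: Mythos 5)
Your overall strategy (reduce to configurations of balls at infinity, then organize the thresholds) is in the right spirit, but there are two genuine gaps, and both are precisely the points where the paper has to work.

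First, in Step 1 you apply Theorem~\ref{teo:regmin} to a minimizing \emph{sequence} $E_n$, but that theorem concerns actual (local or global) minimizers: an arbitrary minimizing sequence need not be bounded, need not have finitely many components, and carries no density estimates. This is not cosmetic, because your entire splitting argument depends on it twice: the dichotomy produces pieces of masses $\mu_i\in(0,m)$ with no lower bound, so your ``induction on mass'' is not well-founded (the splitting could recur indefinitely with masses accumulating), and the exclusion of vanishing plus the quantitative control of the perimeter cut require exactly the uniform density estimates you have not secured. The paper's device is to replace the minimizing sequence by solutions $E^m_R$ of the \emph{constrained} problems on balls $B_R$ (which exist by the direct method and whose energies converge to the infimum as $R\to\infty$), to show these are $(\omega,r_0)$-minimizers of the perimeter with constants uniform in $R$ and in $m\leq\bar m$, and to deduce a uniform positive lower bound $M_{\bar m}$ on the volume of every connected component. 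That lower bound is what makes the recursion terminate and what later forces the number of balls to diverge; without it Step 1 does not close. If you want to keep the concentration-compactness language, you must first pass to such quasi-minimizers.

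Second, in Step 2 the statement you actually need --- that $A_k=\{m:f_k(m)=\inf_{|E|=m}\f(E)\}$ is an initial interval, equivalently that the optimal number of balls is non-decreasing in $m$ --- is asserted but not proved. The ``direct computation'' you invoke only concerns the equal-mass partition $k\mapsto k\,\f(B_{m/k})$, whereas $f_k$ is a minimum over \emph{all} partitions, which for the non-convex profile $\mu\mapsto a\mu^{(N-1)/N}+b\mu^{(2N-\alpha)/N}$ need not be attained at equal masses; and the ``mass-rearrangement argument'' transferring monotonicity to the integer $k^\ast(m)$ is exactly the combinatorial content of the theorem, not a routine step. The paper avoids having to prove that $A_k$ is an interval: it defines $m_{k+1}$ directly as the supremum of those $m$ for which $f_{k+1}$ equals the infimum on all of $[m_k,m)$, and proves $m_{k+1}>m_k$ inductively by showing that for $m$ slightly above $m_k$ the constrained minimizers $E^m_R$ must disconnect (else they would converge to a nonexistent global minimizer), that each component has mass in some $(m_{i-1},m_i]$ with $i\leq k$ and can be replaced by balls via the inductive hypothesis, and that after isolating one ball of volume at least $\e$ the remainder has mass below $m_k$ and so needs at most $k$ balls. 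You should either supply a genuine proof of the monotonicity of $k^\ast$ or restructure Step 2 along these inductive lines.
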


\begin{remark}
Since $\mloc(N,\alpha,\gamma)\to+\infty$ as $\alpha\to0^+$
and the non-existence threshold is uniformly bounded for $\alpha\in(0,1)$
(see Proposition~\ref{prop:nonexist}),
we immediately deduce that, for $\alpha$ small, $\mglob(N,\alpha,\gamma)<\mloc(N,\alpha,\gamma)$.
Moreover, by comparing the energy of a ball of volume $m$ with the energy of two disjoint balls of volume $\frac{m}{2}$,
and sending to infinity the distance between the balls,
we deduce after a straightforward computation
(and estimating $\nl_\alpha(B_1)\geq \omega_N^2 2^{-\alpha}$)
that the following upper bound for the global minimality threshold of the ball holds:
$$
\mglob(N,\alpha,\gamma) <
\omega_N \biggl( \frac{2^\alpha N (2^{\frac1N}-1)}{\omega_N\gamma (1-(\frac12)^{\frac{N-\alpha}{N}})} \biggr)^{\frac{N}{N+1-\alpha}}\,.
$$
Hence, by comparing this value with the explicit expression of $\mloc$ in the physical interesting case $N=3$, $\alpha=1$
(see Theorem~\ref{teo:minlocpalla}),
we deduce that $\mglob(3,1,\gamma)<\mloc(3,1,\gamma)$.
\end{remark}

\begin{remark} \label{rm:log}
In the planar case, one can also consider a Newtonian potential in the nonlocal term, \emph{i.e.}
$$
\int_E\int_E\log\frac{1}{|x-y|}\,\dd x\dd y\,.
$$
It is clear that the infimum of the corresponding functional on $\R^2$ is $-\infty$
(consider, for instance, a minimizing sequence obtained by sending to infinity the distance between the centers of two disjoint balls).
Moreover, also the notion of local minimality considered in Definition~\ref{def:minloc}
becomes meaningless in this situation, since, given any finite perimeter set $E$,
it is always possible to find sets with total energy arbitrarily close to $-\infty$ in every $L^1$-neighbourhood of $E$.
Nevertheless, by reproducing the arguments of this paper one can show that, given a bounded regular critical set $E$ with positive second variation, and a radius $R>0$ such that $E\subset B_R$,
there exists $\delta>0$ such that $E$ minimizes the energy with respect to competitors $F\subset B_R$ with $\alpha(F,E)<\delta$.
\end{remark}


\section{Second variation and $W^{2,p}$-local minimality} \label{sect:2var}

We start this section by introducing the notions of first and second variation of the functional $\f$
along families of deformations as in the following definition.

\begin{definition} \label{def:flow}
Let $X:\R^N\rightarrow\R^N$ be a $C^2$ vector field.
The \emph{admissible flow} associated with $X$ is the function $\Phi:\R^N\times(-1,1)\rightarrow\R^N$ defined by the equations
$$
\frac{\partial\Phi}{\partial t}=X(\Phi)\,,
\quad
\Phi(x,0)=x\,.
$$
\end{definition}

\begin{definition}
Let $E\subset\R^N$ be a set of class $C^2$, and let $\Phi$ be an admissible flow. We define the \emph{first and second variation of $\f$ at $E$ with respect to the flow $\Phi$} to be
$$
\frac{\dd}{\dd t}\f(E_t)_{|_{t=0}}
\qquad\mbox{and}\qquad
\frac{\dd^2}{\dd t^2}\f(E_t)_{|_{t=0}}
$$
respectively, where we set $E_{t}:=\Phi_t(E)$.
\end{definition}

Given a regular set $E$, we denote by $X_\tau:=X-\langle X,\nu_E\rangle\nu_E$
the tangential part to $\partial E$ of a vector field $X$.
We recall that the tangential gradient $D_\tau$ is defined by $D_\tau\vphi:=(D\vphi)_\tau$,
and that $B_{\partial E}:=D_\tau\nu_E$ is the second fundamental form of $\partial E$.

The following theorem contains the explicit formula for the first and second variation of $\f$.
The computation, which is postponed to the Appendix, is performed by a regularization approach
which is slightly different from the technique used, in the case $\alpha=N-2$,
in \cite{ChoSte} (for a critical set, see also \cite{Mur2}) and in \cite{AceFusMor} (for a general regular set):
here we introduce a family of regularized potentials (depending on a small parameter $\delta\in\R$)
to avoid the problems in the differentiation of the singularity in the nonlocal part,
recovering the result by letting the parameter tend to 0.

\begin{theorem} \label{teo:var}
Let $E\subset\R^N$ be a bounded set of class $C^2$,
and let $\Phi$ be the admissible flow associated with a $C^2$ vector field $X$.
Then the first variation of $\f$ at $E$ with respect to the flow $\Phi$ is
\begin{equation} \label{eq:firvar}
{\frac{\dd\f(E_t)}{\dd t}}_{|_{t=0}} = \int_{\partial E} (H_{\partial E} + 2\gamma v_{E})\langle X, \nu_{E} \rangle \,\dd\hn\,,
\end{equation}
and the second variation of $\f$ at $E$ with respect to the flow $\Phi$ is
\begin{align*}
{\frac{\dd^2 \f(E_t)}{\dd t^2}}_{|_{t=0}}
= &\int_{\partial E} \left( |D_{\tau}\langle X,\nu_{E}\rangle|^2-|B_{\partial E}|^2\langle X,\nu_{E}\rangle^2 \right) \,\dd\hn \\
& + 2\gamma \int_{\partial E}\int_{\partial E}G(x,y) \langle X(x),\nu_{E}(x) \rangle\langle X(y),\nu_{E}(y) \rangle \,\dd\hn(x)\dd\hn(y)  \\
& +2\gamma\int_{\partial E} \partial_{\nu_{E}}v_{E}\,\langle X,\nu_{E}\rangle^2\,\dd\hn
-\int_{\partial E} (2\gamma v_{E}+H_{\partial E})\,\div_{\tau}\bigl(X_{\tau}\langle X,\nu_{E}\rangle\bigr) \,\dd\hn \\
& +\int_{\partial E}(2\gamma v_{E}+H_{\partial E})(\div X)\langle X,\nu_{E}\rangle \,\dd\hn\,,
\end{align*}
where $G(x,y):=\frac{1}{|x-y|^\alpha}$ is the potential in the nonlocal part of the energy.
\end{theorem}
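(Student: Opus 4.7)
My plan is to split $\f = \p + \gamma\nl_\alpha$ and handle the two pieces separately. For the perimeter part the formulas are classical: combining $H_{\partial E} = \div_\tau \nu_E$ with the tangential divergence theorem and the standard evolution laws for the unit normal, the second fundamental form and the area element along the flow (see \cite{AFP}, and the analogous case $\alpha=N-2$ treated in \cite{AceFusMor}) yields both the first-variation term $\int_{\partial E} H_{\partial E}\langle X,\nu_E\rangle\,\dd\hn$ and, for the second variation, the Jacobi-form contribution $\int_{\partial E}\bigl(|D_\tau\langle X,\nu_E\rangle|^2 - |B_{\partial E}|^2\langle X,\nu_E\rangle^2\bigr)\,\dd\hn$ together with the two structural terms involving $H_{\partial E}$ in the statement.

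For the nonlocal term the singularity of $G(x,y)=|x-y|^{-\alpha}$ at $x=y$ blocks direct differentiation under the integral sign. As announced in the statement I regularize with
$$G_\delta(x,y) := (|x-y|^2+\delta^2)^{-\alpha/2}\qquad (\delta>0),$$
setting $\nl^\delta(E):=\int_E\!\int_E G_\delta(x,y)\,\dd x\,\dd y$ and $v_E^\delta(x):=\int_E G_\delta(x,y)\,\dd y$. The change of variables $x=\Phi_t(\xi)$, $y=\Phi_t(\eta)$ turns the integrand into a smooth function of $t$,
$$\nl^\delta(E_t) = \int_E\!\int_E G_\delta\bigl(\Phi_t(\xi),\Phi_t(\eta)\bigr)\, J\Phi_t(\xi)\,J\Phi_t(\eta)\,\dd\xi\,\dd\eta,$$
so that differentiation under the integral sign becomes legitimate. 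Using $\partial_t\Phi_t|_{t=0} = X$, $\partial_t J\Phi_t|_{t=0} = \div X$, symmetrizing in the two variables and applying the divergence theorem once, the first derivative at $t=0$ collapses to $2\int_{\partial E} v_E^\delta \langle X,\nu_E\rangle\,\dd\hn$; sending $\delta\to0^+$ (by Proposition~\ref{prop:ve} and dominated convergence) yields \eqref{eq:firvar}.

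For the second variation I differentiate the same expression once more. The bookkeeping separates into four groups: (i) the double boundary integral of $G_\delta$ with weights $\langle X,\nu_E\rangle$ at both points, coming from two derivatives hitting the kernel; (ii) a boundary integral of $\partial_{\nu_E} v_E^\delta \langle X,\nu_E\rangle^2$, produced by one derivative hitting $v_E^\delta$ and the surface transport formula moving the normal derivative out; and (iii)--(iv) the terms $(2\gamma v_E^\delta+H_{\partial E})\div_\tau(X_\tau\langle X,\nu_E\rangle)$ and $(2\gamma v_E^\delta+H_{\partial E})(\div X)\langle X,\nu_E\rangle$, coming respectively from the variation of the tangential part of the velocity field and from the deformation of the surface measure along $\Phi_t$. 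Groups (iii)--(iv) are shared with the perimeter computation, which is why they combine neatly into the expressions involving $2\gamma v_E + H_{\partial E}$ in the statement.

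The remaining step is to send $\delta\to0^+$ in each of the four groups. Uniform convergence $v_E^\delta\to v_E$ in $C^{1,\beta}(\R^N)$ follows from the proof of Proposition~\ref{prop:ve}, whose estimates hold with constants independent of $\delta$ because $G_\delta\leq G$ pointwise; this controls the limit in groups (ii)--(iv). The delicate passage is the double surface integral in (i): since $\partial E$ is of class $C^2$ and $\alpha<N-1$, the singular kernel $|x-y|^{-\alpha}$ is integrable on $\partial E\times\partial E$ with respect to $\hn\otimes\hn$, so monotone convergence $G_\delta\uparrow G$ delivers the stated limit. This integrability check is where the standing hypothesis $\alpha\in(0,N-1)$ plays its essential role, and I expect verifying it together with the uniform-in-$\delta$ estimates for the other boundary integrals to be the main technical obstacle.
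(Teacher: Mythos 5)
Your strategy coincides with the paper's own proof in the Appendix: the perimeter part is quoted from the classical computations, and the nonlocal part is handled by regularizing the kernel to $G_\delta(a,b)=(|a-b|^2+\delta^2)^{-\alpha/2}$, pulling back along the flow, differentiating, integrating by parts, and letting $\delta\to0^+$. Your identification of the four groups of terms in the second variation and your integrability check for $|x-y|^{-\alpha}$ on $\partial E\times\partial E$ (using $\alpha<N-1$ and the $C^2$ regularity of $\partial E$) are both consistent with what the paper does.

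There is, however, one genuine logical gap in the write-up. What you actually compute is $\lim_{\delta\to0}\frac{\dd}{\dd t}\nl^\delta(E_t)\big|_{t=0}$ (and likewise for the second derivative), and you then assert that this equals $\frac{\dd}{\dd t}\nl(E_t)\big|_{t=0}$. Convergence of the derivatives at the single point $t=0$ does not justify this interchange of $\lim_{\delta\to0}$ with $\frac{\dd}{\dd t}$; one needs the standard criterion that $\nl^\delta(E_t)\to\nl(E_t)$ (at least at one point, in fact uniformly) \emph{and} that $\frac{\partial}{\partial t}\nl^\delta(E_t)$ converges \emph{uniformly for $t$ in a neighbourhood of $0$} to some function $H(t)$, whence $\nl(E_t)$ is differentiable with derivative $H(t)$; the same scheme must then be repeated for the second derivative. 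The paper devotes explicit steps to exactly this point, writing $\frac{\partial}{\partial t}\nl_\delta(E_t)$ after integration by parts as integrals of $G_\delta(\Phi_t(x),\Phi_t(y))$ against coefficient functions that are bounded uniformly in $(t,x,y)$, which yields the required uniformity in $t$. Your argument would be complete once you add this uniform-in-$t$ convergence statement; without it, the passage from the $\delta$-regularized derivatives at $t=0$ to the derivatives of $\nl(E_t)$ at $t=0$ is unjustified.
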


If $E$ is a regular critical set (as in Definition \ref{def:crit}) it holds
$$
\int_{\partial E} (2\gamma v_{E}+H_{\partial E})\div_{\tau}\bigl(X_{\tau}\langle X,\nu_{E}\rangle\bigr) \,\dd\hn = 0\,.
$$
Moreover if the admissible flow $\Phi$ preserves the volume of $E$,
\emph{i.e.} if $|\Phi_t(E)|=|E|$ for all $t\in(-1,1)$,
then (see \cite[equation~(2.30)]{ChoSte})
$$
0=\frac{\dd^2}{\dd t^2}|E_t|_{|_{t=0}} = \int_{\partial E}(\div X)\langle X, \nu_E \rangle \,\dd\hn\,.
$$
Hence we obtain the following expression for the second variation at a regular critical set
with respect to a volume-preserving admissible flow:
\begin{align*}
{\frac{\dd^2 \f(E_t)}{\dd t^2}}_{|_{t=0}}
& =\int_{\partial E} \bigl( |D_{\tau}\langle X,\nu_{E}\rangle|^2-|B_{\partial E}|^2\langle X,\nu_{E}\rangle^2 \bigl) \,\dd\hn
    +2\gamma\int_{\partial E} \partial_{\nu_{E}}v_{E} \langle X,\nu_{E}\rangle^2\,\dd\hn \\
& \quad +2\gamma\int_{\partial E}\int_{\partial E}G(x,y) \langle X(x),\nu_{E}(x) \rangle\langle X(y),\nu_{E}(y) \rangle\,\dd\hn(x)\dd\hn(y)\,.
\end{align*}

Following \cite{AceFusMor}, we introduce the space
$$
\widetilde{H}^1(\partial E):=\left\{\vphi\in H^1(\partial E) \::\: \int_{\partial E}\vphi\,\dd\hn=0 \right\}
$$
endowed with the norm $\|\vphi\|_{\widetilde{H}^1(\partial E)} := \|\nabla\vphi\|_{L^2(\partial E)}$,
and we define on it the following quadratic form associated with the second variation.

\begin{definition} \label{def:fquad}
Let $E\subset\R^N$ be a regular critical set.
We define the quadratic form $\partial^2\f(E):\widetilde{H}^1(\partial E)\rightarrow\R$ by
\begin{equation} \label{eq:fquad}
\begin{split}
\partial^2\f(E)[\vphi]
& =\int_{\partial E} \bigl( |D_{\tau}\vphi|^2-|B_{\partial E}|^2\vphi^2 \bigl) \,\dd\hn
    +2\gamma\int_{\partial E} (\partial_{\nu_{E}}v_{E}) \vphi^2\,\dd\hn \\
& \quad +2\gamma\int_{\partial E}\int_{\partial E}G(x,y) \vphi(x) \vphi(y)\,\dd\hn(x)\dd\hn(y)\,.
\end{split}
\end{equation}
\end{definition}

Notice that if $E$ is a regular critical set and $\Phi$ preserves the volume of $E$, then
\begin{equation} \label{eq:fquadvar2}
\partial^2\f(E)[\langle X, \nu_E \rangle] = {\frac{\dd^2 \f(E_t)}{\dd t^2}}_{|_{t=0}}\,.
\end{equation}
We remark that the last integral in the expression of $\partial^2\f(E)$ is well defined for $\vphi\in\widetilde{H}^1(\partial E)$,
thanks to the following result.

\begin{lemma} \label{lemma:potenziale}
Let $E$ be a bounded set of class $C^1$.
There exists a constant $C>0$, depending only on $E,$ $N$ and $\alpha$, such that
for every $\vphi,\psi\in \widetilde{H}^1(\partial E)$
\begin{equation} \label{eq:potenziale}
\int_{\partial E}\int_{\partial E}G(x,y) \vphi(x)\psi(y) \,\dd\hn(x)\dd\hn(y)
\leq C \|\vphi\|_{L^2}\|\psi\|_{L^2}
\leq C \|\vphi\|_{\widetilde{H}^1}\|\psi\|_{\widetilde{H}^1}\,.
\end{equation}
\end{lemma}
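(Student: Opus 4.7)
The plan is to establish the $L^2$-bilinear bound directly from a Schur-type estimate, and then derive the $\widetilde{H}^1$ bound by invoking the Poincar\'e inequality on the compact manifold $\partial E$. Note that the zero-mean hypothesis plays no role in the $L^2$ bound; it enters only through Poincar\'e.

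The crux is the pointwise estimate
$$K(x) := \int_{\partial E}\frac{\dd\hn(y)}{|x-y|^\alpha} \leq C(E,N,\alpha) \qquad\text{uniformly in } x\in\partial E.$$
Since $\partial E$ is a compact $C^1$-manifold of dimension $N-1$, it satisfies an Ahlfors-type bound $\hn(\partial E\cap B_r(x))\le M r^{N-1}$ for all $x\in\partial E$ and all $r>0$ (locally this comes from the bi-Lipschitz equivalence of a $C^1$ graph to $\R^{N-1}$; globally it is just the finiteness of the total area). Decomposing the integral dyadically over the annuli $\{2^{-k-1}<|x-y|\le 2^{-k}\}$ yields contributions bounded by $M 2^{-k(N-1-\alpha)}$, which sum to a finite constant precisely because $\alpha<N-1$. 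Then Cauchy-Schwarz applied to $G=G^{1/2}\cdot G^{1/2}$ in the double integral gives
$$\int_{\partial E}\!\int_{\partial E}\! G(x,y)\vphi(x)\psi(y)\,\dd\hn(x)\dd\hn(y) \leq \Bigl(\int_{\partial E}\! K\vphi^2\,\dd\hn\Bigr)^{\!1/2}\!\Bigl(\int_{\partial E}\! K\psi^2\,\dd\hn\Bigr)^{\!1/2} \!\leq C\|\vphi\|_{L^2}\|\psi\|_{L^2},$$
which is the first inequality. For the second, the Poincar\'e inequality on the compact $C^1$-manifold $\partial E$, applied to zero-mean functions, yields $\|\vphi\|_{L^2(\partial E)}\le C\|\nabla_\tau\vphi\|_{L^2(\partial E)}=C\|\vphi\|_{\widetilde{H}^1}$ (and likewise for $\psi$), and composing gives the stated inequality.

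The only non-routine step is the uniform bound on $K(x)$, and the hypothesis $\alpha<N-1$ is exactly what is needed to make the dyadic tail summable; were $\alpha\ge N-1$, the kernel would fail to be locally integrable on the $(N-1)$-dimensional surface $\partial E$ and no such bound could possibly hold. Everything else is a straightforward application of Cauchy-Schwarz and Poincar\'e.
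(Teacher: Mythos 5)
Your proof is correct, and it takes a genuinely different route from the paper's. The paper reduces the double integral, via local charts and a partition of unity, to the mapping properties of the Riesz-type potential operator $V_\mu f(x)=\int_\Omega|x-y|^{(N-1)(\mu-1)}f(y)\,\dd y$ on bounded domains of $\R^{N-1}$ (Lemma~7.12 of Gilbarg--Trudinger), applied with $\mu=\frac{N-1-\alpha}{N-1}$ and $p=q=2$. You instead prove directly the uniform bound on $K(x)=\int_{\partial E}|x-y|^{-\alpha}\dd\hn(y)$ from the Ahlfors upper bound $\hn(\partial E\cap B_r(x))\le Mr^{N-1}$ and a dyadic decomposition, and then conclude by the Schur test (Cauchy--Schwarz with $G=G^{1/2}\cdot G^{1/2}$). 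Your version is more self-contained (no flattening by charts, no external potential estimate), makes completely transparent that $\alpha<N-1$ is exactly the summability threshold, and yields for free the uniformity of the constant for $\alpha\le\bar\alpha<N-1$ that the paper needs in Remark~\ref{rm:potenziale}; what the Gilbarg--Trudinger route buys is the full $L^p\to L^q$ scale, which is not used here. The final step (Poincar\'e on the compact manifold $\partial E$ for zero-mean functions, so that $\|\vphi\|_{L^2}\le C\|\vphi\|_{\widetilde{H}^1}$) is the same in both arguments, the paper phrasing it as a Sobolev embedding; both implicitly use that $\|\nabla_\tau\cdot\|_{L^2}$ controls the $L^2$ norm on $\widetilde H^1(\partial E)$, which is the paper's standing convention for this norm.
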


\begin{proof}
The proof lies on \cite[Lemma 7.12]{GT}, which states that if $\Omega\subset\R^n$ is a bounded domain and $\mu\in(0,1]$,
the operator $f \mapsto V_{\mu}f$ defined by
$$
(V_{\mu}f)(x):=\int_{\Omega} |x-y|^{n(\mu-1)}f(y)\,\dd y
$$
maps $L^p(\Omega)$ continuously into $L^{q}(\Omega)$ provided that
$0\leq \delta := p^{-1}-q^{-1}<\mu$, and
$$
\|V_\mu f\|_{L^q(\Omega)} \leq  \Bigl(\frac{1-\delta}{\mu-\delta}\Bigr)^{1-\delta}\omega_n^{1-\mu}|\Omega|^{\mu-\delta} \|f\|_{L^p(\Omega)}\,.
$$
In our case, from the fact that our set has compact boundary,
we can simply reduce to the above case using local charts and partition of unity
(notice that the hypothesis of compact boundary allows us to bound from above in the $L^{\infty}$-norm the area factor).
In particular we have that $\mu=\frac{N-1-\alpha}{N-1}$, and applying this result with $p=q=2$ we easily obtain the estimate in the statement by the Sobolev Embedding Theorem.
\end{proof}

\begin{remark} \label{rm:potenziale}
Using the estimate contained in the previous lemma it is easily seen that $\partial^2\f(E)$ is continuous with respect to the strong convergence in $\widetilde{H}^1(\partial E)$
and  lower semicontinuous with respect to the weak convergence in $\widetilde{H}^1(\partial E)$.
Moreover, it is also clear from the proof that, given $\bar{\alpha}<N-1$,
the constant $C$ in \eqref{eq:potenziale} can be chosen independently of $\alpha\in(0,\bar{\alpha})$.
\end{remark}

Equality \eqref{eq:fquadvar2} suggests that at a regular local minimizer the quadratic form \eqref{eq:fquad} must be nonnegative on the space $\widetilde{H}^1(\partial E)$.
This is the content of the following corollary, whose proof is analogous to \cite[Corollary 3.4]{AceFusMor}.

\begin{corollary} \label{cor:minloc}
Let $E$ be a local minimizer of $\f$ of class $C^2$. Then
$$
\partial^2 \f(E)[\vphi]\geq0\qquad\mbox{for all }\vphi\in\widetilde{H}^1(\partial E)\,.
$$
\end{corollary}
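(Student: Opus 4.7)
The plan is to derive the inequality by showing that $\partial^2\f(E)[\vphi]$ equals the second derivative at $t=0$ of $t\mapsto\f(E_t)$ for a suitably constructed volume-preserving admissible flow, and then to invoke local minimality of $E$. The identity \eqref{eq:fquadvar2} already bridges the abstract quadratic form with the second variation along a flow, provided we can realize any admissible $\vphi\in\widetilde{H}^1(\partial E)$ as the normal component $\langle X,\nu_E\rangle$ of the initial velocity of a volume-preserving flow.

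First I would reduce to the case $\vphi\in C^2(\partial E)$ with $\int_{\partial E}\vphi\,\dd\hn=0$, by density of smooth functions with zero mean in $\widetilde{H}^1(\partial E)$ combined with the continuity of $\partial^2\f(E)$ under strong $\widetilde{H}^1$-convergence recorded in Remark~\ref{rm:potenziale}. For such a smooth $\vphi$ I would extend it to a $C^2$ vector field $X$ on $\R^N$ satisfying $\langle X,\nu_E\rangle=\vphi$ on $\partial E$ (for instance $X=\widetilde\vphi\,\widetilde\nu$, where $\widetilde\vphi$ and $\widetilde\nu$ are smooth extensions to a tubular neighbourhood, cut off outside). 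The associated admissible flow $\Phi$ of Definition~\ref{def:flow} satisfies $\frac{\dd}{\dd t}|\Phi_t(E)|_{|t=0}=\int_{\partial E}\vphi\,\dd\hn=0$, so $|\Phi_t(E)|=|E|+O(t^2)$, but not necessarily exactly $|E|$.

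To restore the volume constraint I would compose with a small correction: define $\widetilde\Phi_t(x):=\lambda_t\Phi_t(x)$ with $\lambda_t:=(|E|/|\Phi_t(E)|)^{1/N}$, so $\lambda_0=1$ and $\lambda_t=1+O(t^2)$, giving $|\widetilde\Phi_t(E)|=|E|$ for all small $t$. An equivalent and cleaner route, exploiting the local invertibility of the volume map, is to build a two-parameter family $\Phi_{t,\sigma(t)}$ where the auxiliary parameter $\sigma$ is chosen by the implicit function theorem to enforce $|\Phi_{t,\sigma(t)}(E)|=|E|$; again $\sigma'(0)=0$ so the correction is second order. In either case the correction does not affect the initial normal velocity $\vphi$ and only contributes terms of order $o(t^2)$ at $t=0$, hence by \eqref{eq:fquadvar2}
\begin{equation*}
\partial^2\f(E)[\vphi]={\frac{\dd^2\f(\widetilde E_t)}{\dd t^2}}_{|_{t=0}}.
\end{equation*}

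Finally, since $E$ is a local minimizer and $|\widetilde E_t|=|E|$ for all small $t$ with $\alpha(E,\widetilde E_t)\to0$, the scalar function $t\mapsto\f(\widetilde E_t)$ has a local minimum at $t=0$, so its second derivative there is nonnegative, yielding $\partial^2\f(E)[\vphi]\geq0$ for smooth mean-zero $\vphi$. Passing to the $\widetilde{H}^1$-limit completes the proof. The main obstacle, and the only nontrivial point, is the construction and regularity of the volume-preserving correction and the verification that it contributes only $o(t^2)$: this requires $\vphi$ to be $C^2$ so that $X\in C^2$ and the admissible flow is well defined in the sense of Definition~\ref{def:flow}, which is precisely why the preliminary smooth approximation step is essential.
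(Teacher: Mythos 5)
Your overall strategy coincides with the paper's (which simply defers to \cite{AceFusMor}): reduce to smooth mean-zero $\vphi$ via density and Remark~\ref{rm:potenziale}, realize $\vphi$ as the initial normal velocity of a volume-preserving variation, identify $\partial^2\f(E)[\vphi]$ with the second derivative of $t\mapsto\f(\widetilde{E}_t)$ at $t=0$, and conclude by local minimality. The gap is in your treatment of the volume-restoring correction. It is false that the rescaling $\lambda_t=\bigl(|E|/|\Phi_t(E)|\bigr)^{1/N}$ (or the implicit-function-theorem parameter $\sigma(t)$) contributes only $o(t^2)$ to the energy: one has $\lambda_t-1=-\frac{t^2}{2N|E|}\frac{\dd^2}{\dd t^2}|E_t|_{|_{t=0}}+o(t^2)$, and $\frac{\dd^2}{\dd t^2}|E_t|_{|_{t=0}}=\int_{\partial E}(\div X)\langle X,\nu_E\rangle\,\dd\hn$ does not vanish for a generic extension $X$ of $\vphi\nu_E$, so the correction alters the second derivative of the energy at order exactly $t^2$. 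Were your claim correct, you would get $\frac{\dd^2}{\dd t^2}\f(\widetilde{E}_t)_{|_{t=0}}=\frac{\dd^2}{\dd t^2}\f(E_t)_{|_{t=0}}$, and by Theorem~\ref{teo:var} (with $H_{\partial E}+2\gamma v_E=\lambda$) the right-hand side equals $\partial^2\f(E)[\vphi]+\lambda\int_{\partial E}(\div X)\langle X,\nu_E\rangle\,\dd\hn$, not $\partial^2\f(E)[\vphi]$. Note also that \eqref{eq:fquadvar2} cannot be invoked literally for either flow: $\Phi_t$ is not volume preserving, and $\widetilde{\Phi}_t=\lambda_t\Phi_t$ is not an admissible flow in the sense of Definition~\ref{def:flow}, since it is not generated by an autonomous $C^2$ field.

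Your route can be repaired, but it requires the computation you skipped: the $t^2$-contribution of the rescaling equals $-\frac{1}{N|E|}\bigl((N-1)\p(E)+\gamma(2N-\alpha)\nl(E)\bigr)\frac{\dd^2}{\dd t^2}|E_t|_{|_{t=0}}$, and this cancels the extra term $\lambda\int_{\partial E}(\div X)\langle X,\nu_E\rangle\,\dd\hn$ precisely because the Lagrange multiplier satisfies $\lambda N|E|=(N-1)\p(E)+\gamma(2N-\alpha)\nl(E)$, as one sees by applying \eqref{eq:firvar} to the dilation field $X(x)=x$. The cleaner fix, and the one implicit in the paper, is to avoid the correction altogether: extend $\vphi$ to a field $X$ with $\div X=0$ in a neighbourhood of $\partial E$ and $\langle X,\nu_E\rangle$ close to $\vphi$ (as in Theorem~\ref{teo:field}), so that the associated admissible flow preserves the volume exactly and \eqref{eq:fquadvar2} applies verbatim. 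A last, minor point: with $E$ only of class $C^2$ the field $\widetilde{\vphi}\,\widetilde{\nu}$ is merely $C^1$; either upgrade the regularity of $E$ via Theorem~\ref{teo:regmin} and Remark~\ref{rm:regularitycrit}, or replace $\widetilde{\nu}$ by a smooth field transversal to $\partial E$.
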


Now we want to look for a sufficient condition for local minimality. First of all we notice that, since our functional is translation invariant, if we compute the second variation of
$\f$ at a regular set $E$ with respect to a flow of the form $\Phi(x,t):=x+t\eta e_i$, where $\eta\in\R$ and $e_i$ is an element of the canonical basis of $\R^N$,
setting $\nu_i:=\langle\nu_E,e_i\rangle$ we obtain that
$$
\partial^2 \f(E)[\eta\nu_i]= {\frac{\dd^2}{\dd t^2}\f(E_t)}_{|_{t=0}} = 0\,.
$$
Following \cite{AceFusMor},
since we aim to prove that the \emph{strict} positivity of the second variation is a sufficient condition for local minimality,
we shall exclude the finite dimensional subspace of $\widetilde{H}^1(\partial E)$ generated by the functions $\nu_i$, which we denote by $T(\partial E )$.
Hence we split
$$
\widetilde{H}^1(\partial E)=T^{\bot}(\partial E)\oplus T(\partial E)\,,
$$
where $T^{\bot}(\partial E)$ is the orthogonal complement to $T(\partial E)$ in the $L^2$-sense, \emph{i.e.},\emph{}
$$
T^{\bot}(\partial E):=\left\{  \vphi\in\widetilde{H}^1(\partial E) \,:\, \int_{\partial E}\vphi\nu_i\,\dd\hn=0\;\;\mbox{for each}\;i=1,\dots,N \right\}\,.
$$
It can be shown (see \cite[Equation~(3.7)]{AceFusMor}) that there exists an orthonormal frame $(\e_1,\dots,\e_N)$ such that
$$
\int_{\partial E}\langle \nu,\e_i \rangle \langle \nu,\e_j \rangle\,\dd\hn=0\qquad\mbox{for all }i\neq j\,,
$$
so that the projection on $T^{\bot}(\partial E)$ of a function $\vphi\in\widetilde{H}^1(\partial E)$ is
$$
{\pi}_{T^{\bot}(\partial E)}(\vphi)
= \vphi-\sum_{i=1}^N \left( \int_{\partial E} \vphi \langle\nu,\e_i\rangle\,\dd\hn \right)
\frac{\langle \nu, \e_i \rangle}{\| \langle\nu, \e_i\rangle\|^2_{L^2(\partial E)}}
$$
(notice that $\langle\nu,\e_i\rangle\not\equiv0$ for every $i$,
since on the contrary the set $E$ would be translation invariant in the direction $\e_i$).

\begin{definition} \label{def:varpos}
We say that $\f$ has \emph{positive second variation} at the regular critical set $E$ if
$$
\partial^2\f(E)[\vphi]>0
\qquad\text{for all }\vphi\in T^\bot(\partial E)\setmeno\{0\}.
$$
\end{definition}

One could expect that the positiveness of the second variation implies also a sort of coercivity;
this is shown in the following lemma.

\begin{lemma}\label{lemma:varpos}
Assume that $\f$ has positive second variation at a regular critical set $E$. Then
$$
m_{0}:=\inf\bigl\{ \partial^2\f(E)[\vphi] \,:\, \vphi\in T^{\bot}(\partial E), \, \|\vphi\|_{\widetilde{H}^1(\partial E)} =1 \bigr\} >0\,,
$$
and
$$
\partial^2\f(E)[\vphi]\geq m_0 \|\vphi\|^2_{\widetilde{H}^1(\partial E)} \qquad\mbox{for all }\vphi\in T^{\bot}(\partial E)\,.
$$
\end{lemma}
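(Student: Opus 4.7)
The plan is to argue by contradiction using the direct method: assume the infimum $m_0$ is not strictly positive and extract a minimizing sequence, then exploit weak $\widetilde H^1$-compactness together with Rellich compactness on the compact manifold $\partial E$ to produce a limiting function that violates the positivity hypothesis. The quadratic homogeneity of $\partial^2\f(E)$ will then automatically give the stated coercivity estimate from $m_0>0$ by the usual rescaling $\vphi\mapsto\vphi/\|\vphi\|_{\widetilde H^1}$ applied to any nonzero $\vphi\in T^\bot(\partial E)$.

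More concretely, I would take a sequence $\vphi_h\in T^\bot(\partial E)$ with $\|\vphi_h\|_{\widetilde H^1}=1$ and $\partial^2\f(E)[\vphi_h]\to m_0$. Up to a subsequence, $\vphi_h\rightharpoonup\vphi_0$ in $\widetilde H^1(\partial E)$ and, since $\partial E$ is a compact smooth manifold, $\vphi_h\to\vphi_0$ strongly in $L^2(\partial E)$. The orthogonality constraint $\int_{\partial E}\vphi_h\nu_i\,\dd\hn=0$ passes to the limit, so $\vphi_0\in T^\bot(\partial E)$. The crucial observation is that only the Dirichlet term $\int_{\partial E}|D_\tau\vphi|^2$ in the expression \eqref{eq:fquad} can fail to be continuous under weak convergence, while the curvature term, the term with $\partial_{\nu_E}v_E$, and the nonlocal double integral are all continuous for the strong $L^2$ convergence (the last one by Lemma~\ref{lemma:potenziale}, which gives a bound in terms of $L^2$-norms).

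The key identity is therefore
\begin{equation*}
\partial^2\f(E)[\vphi_h]=\|\vphi_h\|_{\widetilde H^1}^{2}+R(\vphi_h),
\end{equation*}
where $R$ denotes the sum of the three remaining terms and is continuous in $L^2$. Since $\|\vphi_h\|_{\widetilde H^1}^2=1$ for every $h$ and $R(\vphi_h)\to R(\vphi_0)$, we obtain $1+R(\vphi_0)=m_0$. On the other hand, $\partial^2\f(E)[\vphi_0]=\|\vphi_0\|_{\widetilde H^1}^{2}+R(\vphi_0)=m_0+\|\vphi_0\|_{\widetilde H^1}^{2}-1$, and weak lower semicontinuity of the norm yields $\|\vphi_0\|_{\widetilde H^1}\le 1$, so $\partial^2\f(E)[\vphi_0]\le m_0$.

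Now suppose by contradiction $m_0\le 0$. If $\vphi_0\neq 0$, the positivity assumption from Definition~\ref{def:varpos} gives $\partial^2\f(E)[\vphi_0]>0$, forcing $\|\vphi_0\|_{\widetilde H^1}^{2}>1$, which contradicts weak lower semicontinuity. If instead $\vphi_0=0$, then $R(\vphi_0)=0$ (each term is quadratic in $\vphi$), so the identity $1+R(\vphi_0)=m_0$ gives $m_0=1>0$, again a contradiction. Hence $m_0>0$. The coercivity inequality then follows for any nonzero $\vphi\in T^\bot(\partial E)$ by applying the definition of $m_0$ to $\vphi/\|\vphi\|_{\widetilde H^1(\partial E)}$ and using that $\partial^2\f(E)$ is a quadratic form. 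The only delicate point is confirming that every non-gradient term in \eqref{eq:fquad} is continuous under $L^2$-convergence on $\partial E$; this is where the estimate in Lemma~\ref{lemma:potenziale} and the boundedness of $|B_{\partial E}|$ and $\partial_{\nu_E}v_E$ (the latter by Proposition~\ref{prop:ve}) are essential, and it is the only real technical ingredient beyond a routine compactness argument.
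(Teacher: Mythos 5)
Your proof is correct and follows essentially the same route as the paper: a minimizing sequence, weak $\widetilde{H}^1$-compactness with strong $L^2$ convergence on the compact manifold $\partial E$, lower semicontinuity of the Dirichlet term together with $L^2$-continuity of the remaining terms (via Lemma~\ref{lemma:potenziale} and the boundedness of $|B_{\partial E}|$ and $\partial_{\nu_E}v_E$), the dichotomy $\vphi_0\neq0$ versus $\vphi_0=0$, and finally quadratic homogeneity for the coercivity estimate. Your explicit decomposition $\partial^2\f(E)[\vphi]=\|\vphi\|_{\widetilde H^1}^2+R(\vphi)$ merely makes transparent what the paper packages into Remark~\ref{rm:potenziale} and its treatment of the case $\vphi_0=0$.
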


\begin{proof}
Let $(\vphi_h)_h$ be a minimizing sequence for $m_0$.
Up to a subsequence we can suppose that $\vphi_h\wto \vphi_0$ weakly in $H^1(\partial E)$, with $\vphi_0\in T^{\bot}(\partial E)$.
By the lower semicontinuity of $\partial^2\f(E)$ with respect to the weak convergence in $H^1(\partial E)$
(see Remark~\ref{rm:potenziale}), we have that if $\varphi_0\neq 0$
$$
m_0=\lim_{h\rightarrow\infty}\partial^2\f(E)[\vphi_h]\geq\partial^2\f(E)[\vphi_0]>0\,,
$$
while if $\vphi_0=0$
$$
m_0=\lim_{h\rightarrow\infty}\partial^2\f(E)[\vphi_h]=\lim_{h\rightarrow\infty}\int_{\partial E}|D_{\tau}\vphi_h|^2\,\dd\hn=1\,.
$$
The second part of the statement follows from the fact that $\partial^2\f(E)$ is a quadratic form.
\end{proof}

We now come to the proof of the main result of the paper, namely that the positivity of the second variation at a critical set $E$
is a sufficient condition for local minimality (Theorem~\ref{teo:minl1}).
In the remaining part of this section we prove that a weaker minimality property holds,
that is minimality with respect to sets whose boundaries are graphs over the boundary of $E$
with sufficiently small $W^{2,p}$-norm (Theorem~\ref{teo:minw}).
In order to do this, we start by recalling a technical result needed in the proof, namely \cite[Theorem~3.7]{AceFusMor},
which provides a construction of an admissible flow connecting a regular set $E\subset\R^N$ with an arbitrary set sufficiently close in the $W^{2,p}$-sense.

\begin{theorem}\label{teo:field}
Let $E\subset\R^N$ be a bounded set of class $C^3$ and let $p>N-1$. For all $\e>0$ there exist a tubular neighbourhood $\mathcal{U}$ of $\partial E$ and two positive constants $\delta,$ $C$ with the following properties: if $\psi\in C^2(\partial E)$ and $\|\psi\|_{W^{2,p}(\partial E)}\leq\delta$ then there exists a field $X\in C^2$ with $\div X=0$ in $\mathcal{U}$ such that
$$
\|X-\psi\nu_E\|_{L^2(\partial E)}\leq\e\|\psi\|_{L^2(\partial E)}\,.
$$
Moreover the associated flow
$$
\Phi(x,0)=0,\;\;\;\frac{\partial\Phi}{\partial t}=X(\Phi)
$$
satisfies $\Phi(\partial E, 1)=\{x+\psi(x)\nu_E(x): x\in\partial E\}$, and for every $t\in[0,1]$
$$
\|\Phi(\cdot,t)-Id\|_{W^{2,p}}\leq C\|\psi\|_{W^{2,p}(\partial E)}\,,
$$
where $Id$ denotes the identity map. If in addition $E_1$ has the same volume as $E$, then for every $t$ we have $|E_t|=|E|$ and
$$
\int_{\partial E_t}\langle X,\nu_{E_t} \rangle\,\dd\hn=0\,.
$$
\end{theorem}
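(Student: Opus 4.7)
The plan is to work in a thin tubular neighbourhood $\mathcal{U}$ of $\partial E$, where the signed distance $d_E$ and the nearest-point projection $\pi\colon\mathcal{U}\to\partial E$ are of class $C^2$ (using $\partial E\in C^3$). Since $p>N-1$, Morrey's embedding yields $\|\psi\|_{C^{1,\beta}(\partial E)}\leq C\|\psi\|_{W^{2,p}(\partial E)}$ for some $\beta\in(0,1)$, so for $\delta$ small enough the target set $\partial E_1:=\{x+\psi(x)\nu_E(x):x\in\partial E\}$ is a $C^{1,\beta}$-small graph contained in $\mathcal{U}$, together with the whole interpolating family $\partial E_t:=\{x+t\psi(x)\nu_E(x)\}$ for $t\in[0,1]$.

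\smallskip

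I would construct $X$ as a divergence-corrected normal extension of $\psi\nu_E$. First define the preliminary field $X_0(y):=\psi(\pi(y))\,\nu_E(\pi(y))\,\chi(d_E(y))$, where $\chi\in C^\infty_c(\R)$ is a cutoff with $\chi(0)=1$ and $\chi'(0)=0$; this gives $X_0=\psi\nu_E$ on $\partial E$, $X_0$ compactly supported in $\mathcal{U}$, and $\div X_0=0$ on $\partial E$. The Neumann problem
\[
\Delta u=\div X_0\quad\text{in }\mathcal{U},\qquad \partial_\nu u=0\quad\text{on }\partial\mathcal{U},
\]
is then solvable (the compatibility condition $\int_\mathcal{U}\div X_0\,\dd x=0$ follows from the compact support of $X_0$ by the divergence theorem). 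Setting $X:=X_0-\nabla u$ yields a $C^2$ field with $\div X=0$ in $\mathcal{U}$, so that volume preservation $|E_t|=|E|$ and the identity $\int_{\partial E_t}\langle X,\nu_{E_t}\rangle\,\dd\hn=0$ are immediate consequences of the divergence theorem (using also $X\cdot\nu=0$ on $\partial\mathcal{U}$). Standard $W^{2,p}$ elliptic regularity gives $\|X\|_{W^{2,p}(\mathcal{U})}\leq C\|\psi\|_{W^{2,p}(\partial E)}$, whence a Gronwall argument applied to $\partial_t\Phi=X\circ\Phi$ yields the flow estimate $\|\Phi(\cdot,t)-Id\|_{W^{2,p}}\leq C\|\psi\|_{W^{2,p}(\partial E)}$.

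\smallskip

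Two technical points constitute the main obstacles. First, the time-$1$ flow of $X$ does not in general land exactly on $\partial E_1$, but on a graph $\{x+\tilde\psi(x)\nu_E(x):x\in\partial E\}$ with $\|\tilde\psi-\psi\|_{W^{2,p}(\partial E)}\leq C\|\psi\|_{W^{2,p}(\partial E)}^2$; this mismatch is removed by a Newton/fixed-point iteration on the input datum, which converges geometrically once $\delta$ is small enough. Second, the key $L^2$-estimate $\|X-\psi\nu_E\|_{L^2(\partial E)}\leq\e\|\psi\|_{L^2(\partial E)}$ reduces, since $X_0=\psi\nu_E$ on $\partial E$, to showing $\|\nabla u\|_{L^2(\partial E)}\leq\e\|\psi\|_{L^2(\partial E)}$; this is achieved by choosing the thickness of $\mathcal{U}$ sufficiently small in relation to $\e$ and exploiting that $\div X_0$ vanishes on $\partial E$, so that its $L^p$-norm over $\mathcal{U}$ becomes arbitrarily small (compared to $\|\psi\|_{L^p}$) as the tube shrinks, combined with an elliptic trace estimate for $u$. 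This careful bookkeeping, together with the Newton iteration that restores the exact boundary condition $\Phi(\partial E,1)=\partial E_1$, is the most delicate ingredient of the construction.
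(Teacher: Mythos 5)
The paper does not prove this statement: it is quoted verbatim from Acerbi--Fusco--Morini \cite[Theorem~3.7]{AceFusMor}, where the field is built explicitly in normal coordinates as $X=g\nabla d$, with $g$ solving the transport equation $\partial_t g+g\,\Delta d=0$ along normal lines and with initial datum $h(x)$ chosen so that the time-one flow along the normal line through $x$ lands exactly at $x+\psi(x)\nu_E(x)$; divergence-freeness and the exact endpoint condition then hold by construction, and the $\e$-estimate is the statement that $h\to\psi$. Your Neumann/Hodge-correction route is genuinely different, and it contains gaps that I do not think are repairable as written.

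First, the claim that $\div X_0=0$ on $\partial E$ is false. Writing $X_0=f\,\nabla d$ with $f=(\psi\circ\pi)\,(\chi\circ d)$, one has $\div X_0=\partial_\nu f+f\,\Delta d$; your choices $\chi'(0)=0$ and $\partial_\nu(\psi\circ\pi)=0$ kill the first term, but the second equals $\psi H_{\partial E}$ on $\partial E$, which is nonzero unless $\partial E$ is minimal. Since the entire mechanism for the key estimate $\|\nabla u\|_{L^2(\partial E)}\leq\e\|\psi\|_{L^2(\partial E)}$ is ``$\div X_0$ vanishes on $\partial E$, so its $L^p$-norm over the shrinking tube is negligible,'' this estimate is unsupported. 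Even granting a small source term, the argument would still need quantitative control of the trace of $\nabla u$ on the mid-surface of a tube of thickness $\eta\to0$, and the elliptic and trace constants for the Neumann problem on such thin shells degenerate as $\eta\to0$; you cannot simply shrink $\mathcal{U}$ ``in relation to $\e$'' without tracking this. Second, and more structurally: imposing $\partial_\nu u=0$ on $\partial\mathcal{U}$ together with $X_0=0$ near $\partial\mathcal{U}$ gives $\langle X,\nu\rangle=0$ on $\partial\mathcal{U}$, and then the divergence theorem applied in $E\cap\mathcal{U}$ forces $\int_{\partial E}\langle X,\nu_E\rangle=0$, hence $\frac{\dd}{\dd t}|E_t|=\int_{\partial E_t}\langle X,\nu_{E_t}\rangle=\int_{\partial E}\langle X,\nu_E\rangle=0$ for all $t$. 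Your flow is therefore \emph{always} volume preserving and can never reach the graph of a $\psi$ with $|E_1|\neq|E|$, whereas the theorem asserts $\Phi(\partial E,1)=\{x+\psi(x)\nu_E(x)\}$ for arbitrary small $\psi$ and makes volume preservation a \emph{conditional} extra conclusion. Finally, the Newton iteration that is supposed to restore the exact endpoint condition is only asserted: each correction changes $X$, so convergence in $W^{2,p}$, preservation of $\div X=0$, and stability of the $L^2$-estimate under the iteration would all have to be verified. These points are precisely what the explicit normal-line construction of \cite{AceFusMor} is designed to avoid.
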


%

We are now in position to prove the following $W^{2,p}$-local minimality theorem, analogous to \cite[Theorem~3.9]{AceFusMor}.
The proof contained in \cite{AceFusMor} can be repeated here with minor changes,
and we will only give a sketch of it for the reader's convenience.

\begin{theorem} \label{teo:minw}
Let $p>\max\{2,N-1\}$ and let $E$ be a regular critical set for $\f$ with positive second variation, according to Definition~\ref{def:varpos}.
Then there exist $\delta,$ $C_0>0$ such that
$$
\f(F)\geq \f(E)+C_0(\alpha(E,F))^2\,,
$$
for each $F\subset\R^N$ such that $|F|=|E|$ and $\partial F=\{x+\psi(x)\nu_E(x): x\in\partial E\}$
with $\|\psi\|_{W^{2,p}(\partial E)}\leq~\delta$.
\end{theorem}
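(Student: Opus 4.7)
The plan is to proceed by contradiction, following the Taylor-expansion scheme of \cite[Theorem~3.9]{AceFusMor}. Suppose the statement fails; then there exists a sequence of admissible competitors $F_h$, each a normal graph over $\partial E$ of height $\psi_h$ with $\|\psi_h\|_{W^{2,p}(\partial E)}\to 0$ and $|F_h|=|E|$, such that
$$
\f(F_h)-\f(E)<\frac{1}{h}\,\alpha(E,F_h)^2.
$$
Exploiting the translation invariance of $\f$, I would first replace each $F_h$ by a suitable translate so that the $T(\partial E)$-components of $\psi_h$, namely the projections $\int_{\partial E}\psi_h\langle\nu_E,\e_i\rangle\,\dd\hn$, are of order $o(\|\psi_h\|_{L^2})$. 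This can be arranged by an implicit-function-theorem argument once $\|\psi_h\|_{W^{2,p}}$ is sufficiently small, and after the adjustment one also has $|E\triangle F_h|=\alpha(E,F_h)+o(\|\psi_h\|_{L^2})$.

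Next, fixing a small parameter $\e$, Theorem~\ref{teo:field} produces for $h$ large a divergence-free $C^2$ vector field $X_h$ and an associated admissible flow $\Phi_h$ satisfying $\Phi_h(\partial E,1)=\partial F_h$, $\|X_h-\psi_h\nu_E\|_{L^2(\partial E)}\le\e\|\psi_h\|_{L^2}$, and the volume-preservation identity $\int_{\partial E_h^t}\langle X_h,\nu_{E_h^t}\rangle\,\dd\hn=0$ at every $t\in[0,1]$, where $E_h^t:=\Phi_h(E,t)$. Setting $f_h(t):=\f(E_h^t)$, criticality of $E$ and volume preservation give $f_h'(0)=0$, so Taylor's formula yields
$$
\f(F_h)-\f(E)=\int_0^1(1-t)f_h''(t)\,\dd t.
$$
By Theorem~\ref{teo:var}, $f_h''(t)$ coincides with $\partial^2\f(E_h^t)[\langle X_h,\nu_{E_h^t}\rangle]$ up to remainder terms involving the Euler--Lagrange quantity $2\gamma v_{E_h^t}+H_{\partial E_h^t}$ multiplied by $\div_\tau(X_{h,\tau}\langle X_h,\nu\rangle)$ or $(\div X_h)\langle X_h,\nu\rangle$; because $\div X_h\equiv 0$ in a neighbourhood of $\partial E$ and $E_h^t\to E$ in $C^{1,\beta}$ (so that $2\gamma v_{E_h^t}+H_{\partial E_h^t}$ converges uniformly to the Lagrange constant $\lambda$ along $\partial E_h^t$), these remainders are $o(1)\|\langle X_h,\nu_{E_h^t}\rangle\|_{L^2}^2$ uniformly in $t$.

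The crucial step is the coercive bound $f_h''(t)\ge\tfrac{m_0}{2}\|\langle X_h,\nu_{E_h^t}\rangle\|_{\widetilde{H}^1}^2$, which I would obtain by combining Lemma~\ref{lemma:varpos} with a continuity estimate for the quadratic form $\partial^2\f$ under $W^{2,p}$-perturbations of the underlying set (all quantities transferred to $\partial E$ via the diffeomorphism $\Phi_h(\cdot,t)$), after verifying that the translation adjustment of the first step makes $\langle X_h,\nu_{E_h^t}\rangle$ essentially orthogonal to $T(\partial E_h^t)$ for every $t\in[0,1]$. Theorem~\ref{teo:field} then gives $\|\langle X_h,\nu_{E_h^t}\rangle\|_{L^2(\partial E_h^t)}\ge c\|\psi_h\|_{L^2}$, whence integration yields $\f(F_h)-\f(E)\ge c'\|\psi_h\|_{L^2}^2$; Cauchy--Schwarz on $\partial E$ combined with $|E\triangle F_h|\le C\|\psi_h\|_{L^1}$ then delivers $\|\psi_h\|_{L^2}^2\ge c''\alpha(E,F_h)^2$, contradicting the assumed upper bound. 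The main obstacle is precisely this uniform coercivity along the full family $\{E_h^t\}$: it requires continuity of the nonlocal quadratic form in the $W^{2,p}$-topology of the set (non-trivial because of the singular double integral with kernel $G$, but handled via Lemma~\ref{lemma:potenziale} and the uniformity noted in Remark~\ref{rm:potenziale}) together with propagation of the orthogonality to $T$ through the flow.
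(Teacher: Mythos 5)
Your proposal follows essentially the same two-step strategy as the paper: (i) a uniform coercivity estimate for the quadratic form $\partial^2\f$ on sets that are $W^{2,p}$-close to $E$, valid for test functions whose projection onto the translation directions $T$ is small (this is exactly the paper's Step 1, proved by the same compactness/continuity argument based on Lemma~\ref{lemma:potenziale} and Remark~\ref{rm:potenziale}), and (ii) the flow of Theorem~\ref{teo:field} together with criticality, $\div X=0$ and a Taylor expansion, as in Step 2. The only organizational differences are that you wrap everything in a contradiction argument and normalize the translation degeneracy by translating $F_h$ at the outset, whereas the paper builds the same slack directly into the coercivity statement via the constraint $\bigl|\int_{\partial F}\vphi\,\nu_F\,\dd\hn\bigr|\leq\delta_1$; these are equivalent ways of handling the same degeneracy.

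One justification in your sketch is not correct as stated: from $E_h^t\to E$ in $C^{1,\beta}$ you cannot conclude that $2\gamma v_{E_h^t}+H_{\partial E_h^t}$ converges \emph{uniformly} to the Lagrange constant, since $C^{1,\beta}$ convergence gives no control on the curvatures. What is available (from $\|\psi_h\|_{W^{2,p}}\to0$ and the bound $\|\Phi_h(\cdot,t)-Id\|_{W^{2,p}}\leq C\|\psi_h\|_{W^{2,p}}$ in Theorem~\ref{teo:field}) is only $L^p$ convergence of $H_{\partial E_h^t}$, so the remainder term $\int_{\partial E_h^t}(2\gamma v+H)\div_{\tau}\bigl(X_{\tau}\langle X,\nu\rangle\bigr)\,\dd\hn$ must be estimated by H\"older, subtracting the constant $\lambda$ (whose contribution vanishes by the tangential divergence theorem) and exploiting the specific structure of the field $X$ to control $\div_{\tau}\bigl(X_{\tau}\langle X,\nu\rangle\bigr)$ in terms of $\|\langle X,\nu\rangle\|_{H^1}^2$; this is precisely the content of Step 2 of \cite[Theorem~3.9]{AceFusMor}, to which the paper also defers. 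With that repair your argument matches the paper's proof.
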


\begin{proof}[Proof (sketch)]
We just describe the strategy of the proof, which is divided into two steps.

\smallskip
\noindent
{\it Step 1.}
There exists $\delta_1>0$ such that if $\partial F=\{x+\psi(x)\nu_E(x)\;:\; x\in\partial E\}$
with $|F|=|E|$ and $\|\psi\|_{W^{2,p}(\partial E)}\leq\delta_1$, then
$$
\inf\left\{ \partial^2\f(F)[\vphi]\;:\;\vphi\in\widetilde{H}^1(\partial F),\,
\|\vphi\|_{\widetilde{H}^1(\partial F)}=1,\,
\Big| \int_{\partial F}\vphi\nu_F\,\dd\hn \Big| \leq\delta_1  \right\}
\geq\frac{m_0}{2},
$$
where $m_0$ is defined in Lemma~\ref{lemma:varpos}.
To prove this we suppose by contradiction that there exist a sequence $(F_n)_n$ of subsets of $\R^N$ such that $\partial F_n=\{x+\psi_n(x)\nu_{E}(x):x\in\partial E\}$, $|F_n|=~|E|$, $\|\psi_n\|_{W^{2,p}(\partial E)}\rightarrow0$, and a sequence of functions $\vphi_n\in\widetilde{H}^1(\partial F_n)$ with $\|\vphi_n\|_{\widetilde{H}^1(\partial F_n)}=1$, $|\int_{\partial F_n}\vphi_n\nu_{F_n}\,\dd\hn|\rightarrow0$, such that
$$
\partial^2\f(F_n)[\vphi_n]<\frac{m_0}{2}.
$$
We consider a sequence of diffeomorphisms $\Phi_n:E\rightarrow F_n$, with $\Phi_n\rightarrow Id$ in $W^{2,p}$, and we set
$$
\tilde{\vphi}_n:=\vphi_n\circ\Phi_n-a_n,\;\;\;\;\;\;a_n:=\med_{\partial E}\vphi_n\circ\Phi_n\,\dd\hn.
$$
Hence $\tilde{\vphi}_n\in\widetilde{H}^1(\partial E)$, $a_n\rightarrow0$, and since $\nu_{F_n}\circ\Phi_n-\nu_E\rightarrow0$ in $C^{0,\beta}$ for some $\beta\in(0,1)$ and a similar convergence holds for the tangential vectors, we have that
$$
\int_{\partial E}\tilde{\vphi}_n\langle \nu_E, \e_i \rangle\,\dd\hn\rightarrow0
$$
for every $i=1,\ldots,N$,
so that $\|\pi_{T^{\bot}(\partial E)}(\tilde{\vphi}_n)\|_{\widetilde{H}^1(\partial E)}\rightarrow1$.
Moreover it can be proved that
$$
\left|\partial^2\f(F_n)[\vphi_n]-\partial^2\f(E)[\tilde{\vphi}_n]\right|\rightarrow0.
$$
Indeed, the convergence of the first integral in the expression of the quadratic form
follows easily from the fact that $B_{\partial F_n}\circ\Phi_n - B_{\partial E}\to0$ in $L^p(\partial E)$,
and from the Sobolev Embedding Theorem (recall that $p>\max\{2,N-1\}$).
For the second integral, it is sufficient to observe that, as a consequence of Proposition~\ref{prop:ve},
the functions $v_{F_h}$ are uniformly bounded in $C^{1,\beta}(\R^N)$ for some $\beta\in(0,1)$
and hence they converge to $v_E$ in $C^{1,\gamma}(B_R)$ for all $\gamma<\beta$ and $R>0$.
Finally, the difference of the last integrals can be written as
\begin{align*}
\int_{\partial F_n}\int_{\partial F_n} &G(x,y)\vphi_n(x)\vphi_n(y)\,\dd\hn\dd\hn
    - \int_{\partial E}\int_{\partial E} G(x,y)\tilde{\vphi}_n(x)\tilde{\vphi}_n(y)\,\dd\hn\dd\hn \\
&= \int_{\partial E}\int_{\partial E} g_n(x,y) G(x,y)\tilde{\vphi}_n(x)\tilde{\vphi}_n(y)\,\dd\hn\dd\hn \\
&    \qquad + a_n \int_{\partial E}\int_{\partial E} G(\Phi_n(x),\Phi_n(y))J_n(x)J_n(y) \bigl( \tilde{\vphi}_n(x) + \tilde{\vphi}_n(y) + a_n \bigr)\,\dd\hn\dd\hn
\end{align*}
where $J_n(z):=J^{N-1}_{\partial E}\Phi_n(z)$ is the $(N-1)$-dimensional jacobian of $\Phi_n$ on $\partial E$, and
$$
g_n(x,y):= \frac{|x-y|^\alpha}{|\Phi_n(x)-\Phi_n(y)|^\alpha}\, J_n(x)J_n(y) - 1\,.
$$
Thus the desired convergence follows from the fact that $g_n\to0$ uniformly, $a_n\to0$,
and from the estimate provided by Lemma~\ref{lemma:potenziale}.

Hence
\begin{align*}
\frac{m_0}{2} \geq \lim_{n\rightarrow\infty}\partial^2\f(F_n)[\vphi_n] &= \lim_{n\rightarrow\infty}\partial^2\f(E)[\tilde{\vphi}_n]
= \lim_{n\rightarrow\infty}\partial^2\f(E)[\pi_{T^{\bot}(\partial E)}(\tilde{\vphi}_n)]\\
&\geq m_0\lim_{n\rightarrow\infty}\|\pi_{T^{\bot}(\partial E)}(\tilde{\vphi}_n)\|_{\widetilde{H}^1(\partial E)}  = m_0,
\end{align*}
which is a contradiction.

\smallskip
\noindent
{\it Step 2.}
If $F$ is as in the statement of the theorem, we can use the vector field $X$ provided by Theorem~\ref{teo:field}
to generate a flow connecting $E$ to $F$ by a family of sets $E_t$, $t\in[0,1]$.
Recalling that $E$ is critical and that $X$ is divergence free, we can write
\begin{align*}
\f(F)&-\f(E)
= \f(E_1) - \f(E_0)
= \int_{0}^{1}(1-t)\frac{\dd^2}{\dd t^2}\f(E_t)\,\dd t  \\
&= \int_{0}^{1}(1-t)\Big( \partial^2\f(E_t)[\langle X,\nu_{E_t} \rangle] - \int_{\partial E_t} (2\gamma v_{E_t} + H_{\partial E_t}) \div_{\tau_t}(X_{\tau_t}\langle X,\nu_{E_t}\rangle) \,\dd\hn \Big)\,\dd t,
\end{align*}
where $\div_{\tau_t}$ stands for the tangential divergence of $\partial E_t$.
It is now possible to bound from below the previous integral in a quantitative fashion:
to do this we use, in particular, the result proved in Step 1 for the first term,
and we proceed as in Step $2$ of \cite[Theorem~3.9]{AceFusMor} for the second one.
In this way we obtain the desired estimate.
\end{proof}


\section{$L^1$-local minimality} \label{sect:l1min}

In this section we complete the proof of the main result of the paper (Theorem~\ref{teo:minl1}),
started in the previous section.
The main argument of the proof relies on a regularity property of sequences of \emph{quasi-minimizers} of the area functional,
which has been observed by White in \cite{Whi} and was implicitly contained in \cite{Alm} (see also \cite{SchSim}, \cite{Tam}).

\begin{definition} \label{def:quamin}
A set $E\subset\R^N$ is said to be an \emph{$(\omega,r_0)$-minimizer} for the area functional, with $\omega>0$ and $r_0>0$,
if for every ball $B_r(x)$ with $r\leq r_0$
and for every finite perimeter set $F\subset\R^N$ such that $E\triangle F\subset\subset B_r(x)$ we have
$$
\p(E) \leq \p(F) + \omega |E\triangle F|.
$$
\end{definition}

\begin{theorem} \label{teo:quamin}
Let $E_n\subset\R^N$ be a sequence of $(\omega,r_0)$-minimizers of the area functional such that
$$
\sup_n\p(E_n)<+\infty
\quad\text{and}\quad
\chi_{E_n}\to\chi_E\text{ in }L^1(\R^N)
$$
for some bounded set $E$ of class $C^2$. Then for $n$ large enough $E_n$ is of class $C^{1,\frac12}$ and
$$
\partial E_n = \{x+\psi_n(x)\nu_E(x) \,:\, x\in\partial E \},
$$
with $\psi_n\to0$ in $C^{1,\beta}(\partial E)$ for all $\beta\in(0,\frac12)$.
\end{theorem}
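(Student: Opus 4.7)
The plan is to combine the classical regularity theory of $(\omega,r_0)$-minimizers with the $L^1$ convergence to a smooth limit set, upgrading convergence to $C^{1,\beta}$-graph convergence. The overall strategy follows the paradigm of White and Tamanini: once one knows that sequences of quasi-minimizers satisfy uniform density and uniform $\varepsilon$-regularity estimates, $L^1$ convergence to a smooth set propagates to a much stronger convergence of the boundaries.

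First, I would establish Hausdorff convergence $\partial E_n \to \partial E$. Classical density estimates for $(\omega,r_0)$-minimizers (cf.\ \cite[Theorem~21.11]{Mag}) yield a universal constant $\vartheta_0 = \vartheta_0(N) > 0$ such that
$$
|E_n \cap B_r(y)| \geq \vartheta_0 r^N \quad\text{and}\quad |B_r(y) \setminus E_n| \geq \vartheta_0 r^N
$$
for every $y \in \partial E_n$ and every sufficiently small $r$. A standard argument combining these estimates with $\chi_{E_n} \to \chi_E$ in $L^1$ rules out both the loss of boundary points near $\partial E$ and the appearance of spurious boundary points away from $\partial E$, yielding Hausdorff convergence of $\partial E_n$ to $\partial E$. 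In particular, $\partial E_n$ lies inside any prescribed tubular neighbourhood of $\partial E$ for $n$ large.

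Next, I would apply the Almgren--Tamanini $\varepsilon$-regularity theorem (cf.\ \cite{Tam}, \cite{Alm}, \cite{SchSim}): there exist $\varepsilon_0, r_1 > 0$, depending only on $N, \omega, r_0$, such that whenever the cylindrical excess of $E_n$ at $y \in \partial E_n$ on a ball of radius $r < r_1$ is below $\varepsilon_0$, the boundary $\partial E_n$ is a $C^{1,1/2}$-graph on $B_{r/2}(y)$ with estimates depending only on $N, \omega, r_0$. Since $E$ is of class $C^2$, its excess at each boundary point can be made as small as desired by shrinking $r$. The crucial step is to transfer this smallness to $E_n$: one exploits the uniform perimeter bounds and the Hausdorff convergence of boundaries from Step~1, which together imply weak convergence of the perimeter measures of $E_n$ to that of $E$, so that the excess of $E_n$ on fixed small balls is forced to be small for $n$ large. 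Covering $\partial E$ by finitely many such balls, the local graphs provided by the $\varepsilon$-regularity theorem glue into a global representation $\partial E_n = \{x + \psi_n(x)\nu_E(x) : x \in \partial E\}$ with $\|\psi_n\|_{C^{1,1/2}(\partial E)}$ uniformly bounded. By Ascoli--Arzel\`a any subsequence of $(\psi_n)$ admits a further subsequence converging in $C^{1,\beta}(\partial E)$ for every $\beta < \tfrac12$; the $L^1$ convergence identifies the limit as $0$, forcing the full sequence to converge.

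The \emph{main obstacle} is the quantitative flatness transfer in the second step: $L^1$ convergence alone does not control excess or perimeters, so one must exploit the quasi-minimality of $E_n$ together with the smoothness of the limit $E$ to upgrade the convergence of characteristic functions to convergence of perimeter measures, and ultimately to smallness of the excess on fixed balls. Once this step is settled, the gluing of local graphs and the convergence of $\psi_n$ are routine consequences of the uniform $C^{1,1/2}$-estimates and Ascoli--Arzel\`a.
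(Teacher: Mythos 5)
The paper itself offers no proof of this theorem: it is quoted from the literature (White, Almgren, Schoen--Simon, Tamanini), so the only meaningful comparison is with the standard argument behind those references, and your sketch does reproduce it in outline --- uniform density estimates plus $L^1$ convergence giving Hausdorff convergence of the boundaries, transfer of excess smallness from the $C^2$ limit to $E_n$, the $\varepsilon$-regularity theorem with uniform $C^{1,\frac12}$ bounds, gluing into a normal graph over $\partial E$, and Ascoli--Arzel\`a. One assertion in your second step is however false as stated: uniform perimeter bounds together with Hausdorff convergence of the boundaries do \emph{not} imply weak convergence of the perimeter measures (a boundary can oscillate at small scales, carrying extra area while remaining Hausdorff-close to $\partial E$, so in general one only gets the lower semicontinuity inequality). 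What actually yields $|D\chi_{E_n}|\rightharpoonup|D\chi_E|$ is the $(\omega,r_0)$-minimality itself: comparing $E_n$ with $(E_n\setminus B_r(x))\cup(E\cap B_r(x))$, for radii $r$ chosen so that the symmetric-difference terms on $\partial B_r(x)$ vanish along the sequence, gives $\limsup_n\p(E_n;B_r(x))\leq\p(E;B_r(x))+\omega\,|E_n\triangle E|$, and combined with lower semicontinuity and the distributional convergence $D\chi_{E_n}\rightharpoonup D\chi_E$ this forces convergence of the perimeter measures and hence smallness of the excess of $E_n$ on fixed small balls. You do identify the correct mechanism in your closing paragraph (``one must exploit the quasi-minimality''), but the earlier sentence attributing the perimeter-measure convergence to Hausdorff convergence and perimeter bounds should be replaced by this comparison argument; with that correction your sketch is the standard proof that the paper cites.
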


Another useful result is the following consequence of the classical elliptic regularity theory
(see \cite[Lemma~7.2]{AceFusMor} for a proof).

\begin{lemma} \label{lemma:curvature}
Let $E$ be a bounded set of class $C^2$ and let $E_n$ be a sequence of sets of class $C^{1,\beta}$ for some $\beta\in(0,1)$ such that
$\partial E_n = \{x+\psi_n(x)\nu_E(x) \,:\, x\in\partial E \}$, with $\psi_n\to0$ in $C^{1,\beta}(\partial E)$.
Assume also that $H_{\partial E_n}\in L^p(\partial E_n)$ for some $p\geq1$.
If
$$
H_{\partial E_n}(\cdot + \psi_n(\cdot)\nu_E(\cdot))\to H_{\partial E} \qquad\text{in }L^p(\partial E),
$$
then $\psi_n\to0$ in $W^{2,p}(\partial E)$.
\end{lemma}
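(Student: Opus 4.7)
The idea is to interpret the mean curvature of the graph $\{x+\psi(x)\nu_E(x):x\in\partial E\}$ as a quasilinear second--order elliptic operator $\mathcal{H}$ acting on the normal displacement $\psi$, and to extract a uniform $L^p$--elliptic regularity estimate that upgrades the $C^{1,\beta}$--convergence $\psi_n\to 0$ to $W^{2,p}$--convergence, using the hypothesis that $\mathcal{H}(\psi_n)=H_{\partial E_n}(\cdot+\psi_n\nu_E)\to H_{\partial E}=\mathcal{H}(0)$ in $L^p(\partial E)$.

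First I would cover $\partial E$ with finitely many $C^2$ charts and reduce the problem to a local statement in each chart via a subordinate partition of unity. In such local coordinates, the mean curvature of the perturbed graph admits the classical expression
\begin{equation*}
\mathcal{H}(\psi)(x)=a^{ij}(x,\psi(x),\nabla\psi(x))\,\partial_i\partial_j\psi(x)+b(x,\psi(x),\nabla\psi(x)),
\end{equation*}
where $a^{ij},b$ are smooth functions of their arguments (the $C^2$--regularity of $E$ enters through the second fundamental form appearing in $b$, and through the metric in $a^{ij}$). Subtracting the analogous expression for $\mathcal{H}(0)=H_{\partial E}$ and integrating along the segment $[0,\psi_n]$, one obtains a representation of the form
\begin{equation*}
\mathcal{H}(\psi_n)-\mathcal{H}(0)=\mathcal{L}_n\psi_n:=a^{ij}_n(x)\,\partial_i\partial_j\psi_n+b^i_n(x)\,\partial_i\psi_n+c_n(x)\psi_n,
\end{equation*}
with $a^{ij}_n\to g^{ij}$ (the inverse metric on $\partial E$) uniformly as $\psi_n\to 0$ in $C^{1,\beta}$, while $b^i_n$ and $c_n$ remain bounded in $L^\infty$. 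In particular the operators $\mathcal{L}_n$ are uniformly elliptic for $n$ large, with uniformly continuous leading coefficients.

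Next I would apply the Calder\'on--Zygmund $W^{2,p}$--estimate to the equation $\mathcal{L}_n\psi_n=\mathcal{H}(\psi_n)-\mathcal{H}(0)$ in each chart and patch via the partition of unity. This yields a constant $C$, independent of $n$ for $n$ sufficiently large, such that
\begin{equation*}
\|\psi_n\|_{W^{2,p}(\partial E)}\leq C\bigl(\|\mathcal{L}_n\psi_n\|_{L^p(\partial E)}+\|\psi_n\|_{L^p(\partial E)}\bigr).
\end{equation*}
By assumption $\|\mathcal{L}_n\psi_n\|_{L^p(\partial E)}=\|H_{\partial E_n}(\cdot+\psi_n\nu_E)-H_{\partial E}\|_{L^p(\partial E)}\to 0$, while $\|\psi_n\|_{L^p(\partial E)}\to 0$ follows from $\psi_n\to 0$ in $C^{1,\beta}\subset C^0$ on the compact manifold $\partial E$. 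The desired convergence $\psi_n\to 0$ in $W^{2,p}(\partial E)$ is then immediate.

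The main technical obstacle is guaranteeing that the constant $C$ in the Calder\'on--Zygmund estimate can be chosen uniformly in $n$. This reduces to controlling the modulus of continuity of the leading coefficients $a^{ij}_n$ independently of $n$, which in turn follows from the $C^{1,\beta}$--convergence $\psi_n\to 0$ together with the continuous dependence of the coefficients on $(x,\psi,\nabla\psi)$ through the $C^1$ geometry of $\partial E$. Since $E$ is assumed only to be $C^2$, the limit leading coefficients are merely continuous, which is precisely the minimal regularity needed for the standard $L^p$--theory.
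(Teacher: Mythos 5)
Your argument is essentially the proof the paper has in mind: for this lemma the text simply defers to \cite[Lemma~7.2]{AceFusMor}, whose proof is exactly your local-chart representation of the mean curvature as a uniformly elliptic quasilinear second-order operator in $\psi_n$, followed by $L^p$ elliptic estimates whose constants are uniform in $n$ because the $C^{1,\beta}$-convergence $\psi_n\to0$ gives uniform control of the ellipticity and of the modulus of continuity of the leading coefficients. One point deserves more care than you give it: since $E_n$ is a priori only $C^{1,\beta}$, you cannot write $H_{\partial E_n}=a^{ij}_n\partial_i\partial_j\psi_n+\dots$ pointwise from the outset (that already presupposes $\psi_n\in W^{2,p}$); the hypothesis $H_{\partial E_n}\in L^p$ means that $\psi_n$ is a $C^{1,\beta}$ \emph{weak} solution of the divergence-form equation $\div\bigl(A(x,\psi_n,\nabla\psi_n)\bigr)=f_n$ with $f_n\in L^p$, and one must first invoke the $W^{2,p}$-regularity theory for divergence-form equations with H\"older-continuous coefficients (or a difference-quotient/approximation argument) to conclude that $\psi_n\in W^{2,p}$ at all, after which your a priori estimate and the passage to the limit are correct.
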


We recall also the following simple lemma from \cite[Lemma~4.1]{AceFusMor}.

\begin{lemma} \label{lemma:stimaper}
Let $E\subset\R^N$ be a bounded set of class $C^2$.
Then there exists a constant $C_E>0$, depending only on $E$, such that for every finite perimeter set $F\subset\R^N$
$$
\p(E)\leq \p(F) + C_E|E\triangle F|.
$$
\end{lemma}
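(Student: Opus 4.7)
The plan is to construct a smooth compactly supported vector field $X$ on $\R^N$ which realizes $\nu_E$ on $\partial E$, has $\|X\|_\infty \leq 1$, and whose divergence is bounded. Once such a field is in hand, the inequality follows immediately from two applications of the divergence theorem for sets of finite perimeter.

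More precisely, since $E$ is bounded and of class $C^2$, the signed distance function $d_E$ (negative inside $E$, positive outside) is of class $C^2$ in a tubular neighbourhood $\mathcal{U}$ of $\partial E$, and $\nabla d_E = \nu_E$ on $\partial E$. I would take a cut-off function $\eta\in C^\infty_c(\mathcal{U})$ with $\eta\equiv 1$ in a smaller neighbourhood of $\partial E$, and set $X := \eta\,\nabla d_E$. Then $X\in C^1_c(\R^N;\R^N)$, $|X|\leq 1$ everywhere (shrinking $\mathcal{U}$ if needed so that $|\nabla d_E|=1$ on its support), $X=\nu_E$ on $\partial E$, and $\|\div X\|_{L^\infty(\R^N)}\leq C_E$ for some constant depending only on $E$.

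Applying the generalized Gauss--Green formula (see, e.g., \cite[Theorem~5.16]{Mag}) to the set $E$ and to any finite perimeter set $F\subset\R^N$, I obtain
\begin{align*}
\p(E) &= \int_{\partial E}\langle X,\nu_E\rangle\,\dd\hn = \int_E \div X\,\dd x,\\
\int_F \div X\,\dd x &= \int_{\partial^* F}\langle X,\nu_F\rangle\,\dd\hn \leq \|X\|_\infty\,\p(F)\leq \p(F).
\end{align*}
Subtracting these two identities gives
\begin{align*}
\p(E) - \p(F) &\leq \int_E \div X\,\dd x - \int_F \div X\,\dd x = \int_{\R^N}(\chi_E-\chi_F)\,\div X\,\dd x \\
&\leq \|\div X\|_{L^\infty(\R^N)}\,|E\triangle F|,
\end{align*}
which is the claimed inequality with $C_E:=\|\div X\|_{L^\infty(\R^N)}$.

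The argument is essentially routine once the vector field is constructed; no step looks like a serious obstacle, the only point deserving care being to choose the tubular neighbourhood small enough so that $d_E$ is $C^2$ there and $|\nabla d_E|=1$ (a standard consequence of the $C^2$ regularity and boundedness of $\partial E$, via the tubular neighbourhood theorem). Note in particular that the construction of $X$ depends only on $E$, so the constant $C_E$ is intrinsic to $E$, as required.
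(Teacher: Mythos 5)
Your proof is correct and is essentially the standard argument behind this lemma: the paper does not prove it but cites \cite[Lemma~4.1]{AceFusMor}, whose proof is exactly this construction of a compactly supported $C^1$ field $X$ with $X=\nu_E$ on $\partial E$, $\|X\|_\infty\leq 1$ and bounded divergence, followed by two applications of the Gauss--Green formula. Nothing to add.
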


An intermediate step in the proof of Theorem~\ref{teo:minl1}
consists in showing that the $W^{2,p}$-local minimality proved in Theorem~\ref{teo:minw}
implies local minimality with respect to competing sets which are sufficiently close in the Hausdorff distance.
We omit the proof of this result, since it can be easily adapted from \cite[Theorem~4.3]{AceFusMor}
(notice, indeed, that the difficulties coming from the fact of working in the whole space $\R^N$ are not present,
due to the constraint $F\subset\mathcal{I}_{\delta_0}(E)$).

\begin{theorem} \label{teo:mininf}
Let $E\subset\R^N$ be a bounded regular set, and assume that there exists $\delta>0$ such that
\begin{equation} \label{eq:mininf}
\f(E)\leq\f(F)
\end{equation}
for every set $F\subset\R^N$ with $|F|=|E|$ and $\partial F = \{x+\psi(x)\nu_E(x):x\in\partial E\}$,
for some function $\psi$ with $\|\psi\|_{W^{2,p}(\partial E)}\leq\delta$.

Then there exists $\delta_0>0$ such that \eqref{eq:mininf} holds for every finite perimeter set $F$ with $|F|=|E|$
and such that $\I_{-\delta_0}(E)\subset F \subset\I_{\delta_0}(E)$,
where for $\delta\in\R$ we set ($d$ denoting the signed distance to $E$)
$$
\I_\delta(E):=\{x:d(x)<\delta\}\,.
$$
\end{theorem}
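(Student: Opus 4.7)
The plan is to argue by contradiction, adapting the strategy of \cite[Theorem~4.3]{AceFusMor}. If the conclusion failed, we would obtain for every $n\in\N$ a finite perimeter set $F_n$ with $|F_n|=|E|$, $\I_{-1/n}(E)\subset F_n\subset \I_{1/n}(E)$, and $\f(F_n)<\f(E)$. The idea is to replace the $F_n$ with a regularized sequence $\tilde F_n\to E$ which still satisfies $\f(\tilde F_n)<\f(E)$ but converges to $E$ strongly enough (namely, as a $W^{2,p}$-graph over $\partial E$) to contradict the hypothesis \eqref{eq:mininf}.

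To produce $\tilde F_n$, I would fix a large $\Lambda>0$ (depending only on $E$, to be chosen below) and, for each $n$, minimize the penalized functional $F\mapsto\f(F)+\Lambda\big||F|-|E|\big|$ over the admissible class $\{F\subset\R^N : \I_{-1/n}(E)\subset F\subset \I_{1/n}(E)\}$. The direct method delivers a minimizer $\tilde F_n$, and since $F_n$ is admissible we have $\f(\tilde F_n)+\Lambda\big||\tilde F_n|-|E|\big|\leq\f(F_n)<\f(E)$, together with the trivial bound $|\tilde F_n\triangle E|\leq|\I_{1/n}(E)\setminus\I_{-1/n}(E)|\to 0$, which forces $\tilde F_n\to E$ in $L^1$. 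A local volume-adjusting perturbation supported in the open region $\I_{1/n}(E)\setminus\overline{\I_{-1/n}(E)}$, which always contains a tubular neighbourhood of $\partial E$, and whose cost is controlled by Lemma~\ref{lemma:stimaper} and Proposition~\ref{prop:NLlipschitz}, shows that for $\Lambda$ sufficiently large any such minimizer automatically satisfies $|\tilde F_n|=|E|$. Analogous local comparisons then give, uniformly in $n$, that $\tilde F_n$ is an $(\omega,r_0)$-minimizer of the area functional with $\omega=\gamma c_0+\Lambda$.

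Quasi-minimality and the convergence $\tilde F_n\to E$ in $L^1$ allow us to apply Theorem~\ref{teo:quamin}, obtaining a representation $\partial\tilde F_n=\{x+\psi_n(x)\nu_E(x):x\in\partial E\}$ with $\psi_n\to 0$ in $C^{1,\beta}(\partial E)$; in particular $\partial\tilde F_n$ lies strictly inside the obstacle for $n$ large, so $\tilde F_n$ satisfies the Euler-Lagrange equation $H_{\partial\tilde F_n}+2\gamma v_{\tilde F_n}=\lambda_n$ in the classical sense. Proposition~\ref{prop:ve} gives $v_{\tilde F_n}\to v_E$ uniformly, a routine identification (testing the equation against a fixed function) yields $\lambda_n\to\lambda$, and consequently the mean curvatures $H_{\partial\tilde F_n}(\cdot+\psi_n(\cdot)\nu_E(\cdot))$ converge to $H_{\partial E}$ in $L^p(\partial E)$; Lemma~\ref{lemma:curvature} then boosts the convergence to $\psi_n\to 0$ in $W^{2,p}(\partial E)$. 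For $n$ large we have $\|\psi_n\|_{W^{2,p}}\leq\delta$, so \eqref{eq:mininf} delivers $\f(\tilde F_n)\geq\f(E)$, contradicting the construction.

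The main obstacle is the bookkeeping around the obstacle constraint: one must tune $\Lambda$ so that volume-adjusting perturbations stay inside $\I_{1/n}(E)\setminus\overline{\I_{-1/n}(E)}$, and one must verify that the Hausdorff obstacle becomes inactive along $\partial\tilde F_n$ for $n$ large (otherwise the Euler-Lagrange equation would carry an extra Lagrange multiplier supported on the contact set, and Lemma~\ref{lemma:curvature} would not apply directly). Once these technicalities are in place, the remaining steps are formally identical to the periodic setting of \cite{AceFusMor}, which is why its proof can be adapted here with only minor changes.
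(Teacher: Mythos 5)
Your skeleton is the intended one: the paper omits this proof precisely because it is meant to be the obstacle-problem adaptation of \cite[Theorem~4.3]{AceFusMor}, i.e.\ contradiction, replacement of the competitors by minimizers of a volume-penalized problem in the class $\{\I_{-1/n}(E)\subset F\subset\I_{1/n}(E)\}$, uniform $(\omega,r_0)$-minimality, Theorem~\ref{teo:quamin}, convergence of the curvatures, Lemma~\ref{lemma:curvature}, and a final contradiction with the $W^{2,p}$-minimality. The gap is in the step you yourself single out, and your proposed resolution does not work. You claim that $\psi_n\to0$ in $C^{1,\beta}(\partial E)$ implies that $\partial\tilde F_n$ lies \emph{strictly} inside the obstacle region for $n$ large, so that the Euler--Lagrange equation holds with no contact term. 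This cannot be concluded: the obstacles $\partial\I_{\pm1/n}(E)$ themselves collapse onto $\partial E$ at rate $1/n$, the constraint already gives $\|\psi_n\|_{L^\infty}\le 1/n$ for free, and the qualitative convergence $\psi_n\to0$ carries no rate, so it cannot beat the shrinking obstacle. In general the constrained minimizer \emph{does} develop a contact set (take $E=B_R$ with $R$ above the threshold of Theorem~\ref{teo:minlocpalla}: the energy strictly decreases along some direction in $T^\bot(\partial B_R)$, and the minimizer in the thin shell saturates the constraint there). The correct route is to keep the contact set and still prove $H_{\partial\tilde F_n}(\cdot+\psi_n\nu_E)\to H_{\partial E}$ in $L^\infty(\partial E)$: off the contact set the Euler--Lagrange equation holds by two-sided variations; on the contact set with $\partial\I_{1/n}(E)$ inward variations give $H_{\partial\tilde F_n}+2\gamma v_{\tilde F_n}\le\lambda_n$, while tangency from inside gives $H_{\partial\tilde F_n}\ge H_{\partial\I_{1/n}(E)}$, and symmetrically on the inner obstacle; both bounds converge uniformly to $H_{\partial E}$ because the level sets of the signed distance converge to $\partial E$ in $C^2$ and because $\lambda_n\to\lambda$ and $v_{\tilde F_n}\to v_E$. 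This uses the $C^{1,1}$ (equivalently $W^{2,p}$ for all $p$) regularity of solutions of the two-obstacle problem so that $H_{\partial\tilde F_n}$ makes sense pointwise, and it is what allows Lemma~\ref{lemma:curvature} to be applied.

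Two further points are only waved at. For the uniform quasi-minimality, a local competitor $F$ with $F\triangle\tilde F_n\subset\subset B_r(x)$ need not respect the obstacle; you must pass to $(F\cup\I_{-1/n}(E))\cap\I_{1/n}(E)$ and control the perimeter increase via the submodularity of the perimeter together with Lemma~\ref{lemma:stimaper} applied to the uniformly $C^2$ sets $\I_{\pm1/n}(E)$. For the volume adjustment, the region $\I_{1/n}(E)\setminus\overline{\I_{-1/n}(E)}$ has width $2/n\to0$, so the Esposito--Fusco bilipschitz deformation in a fixed ball does not fit inside it; it is cleaner to note that quasi-minimality of the penalized minimizers holds regardless of their volume, run the regularity step first, and then exclude $|\tilde F_n|\neq|E|$ because the one-sided first-variation inequality would force $|H_{\partial\tilde F_n}+2\gamma v_{\tilde F_n}|\ge\Lambda$ on the non-contact part of $\partial\tilde F_n$, which is incompatible with the uniform curvature bounds once $\Lambda$ is fixed large enough depending only on $E$.
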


We are finally ready to complete the proof of the main result of the paper.
The strategy follows closely \cite[Theorem~1.1]{AceFusMor},
with the necessary technical modifications due to the fact that
here we have to deal with a more general exponent $\alpha$
and with the lack of compactness of the ambient space.

\begin{proof}[Proof of Theorem~\ref{teo:minl1}]
We assume by contradiction that there exists a sequence of sets $E_h\subset\R^N$, with $|E_h|=|E|$ and $\alpha(E_h,E)>0$,
such that $\e_h:=\alpha(E_h,E)\to0$ and
\begin{equation} \label{contraminL1}
\f(E_h) < \f(E) + \frac{C_0}{4} \bigl( \alpha(E_h,E) \bigr)^2,
\end{equation}
where $C_0$ is the constant provided by Theorem~\ref{teo:minw}.
By approximation we can assume without loss of generality that each set of the sequence is bounded,
that is, there exist $R_h>0$ (which we can also take satisfying $R_h\to+\infty$)
such that $E_h\subset B_{R_h}$.

We now define $F_h\subset\R^N$ as a solution to the penalization problem
\begin{equation} \label{penalizzatoL1}
\min \left\{
\mathcal{J}_h(F) := \f(F) + \Lambda_1\sqrt{\bigl( \alpha(F,E)-\e_h \bigr)^2+\e_h} + \Lambda_2\big||F|-|E|\big|
\,:\, F\subset B_{R_h}
\right\},
\end{equation}
where $\Lambda_1$ and $\Lambda_2$ are positive constant, to be chosen
(notice that the constraint $F\subset B_{R_h}$ guarantees the existence of a solution).
We first fix
\begin{equation} \label{eq:lambda1}
\Lambda_1> C_E + c_0\gamma\,.
\end{equation}
Here $C_E$ is as in Lemma~\ref{lemma:stimaper},
while $c_0$ is the constant provided by Proposition~\ref{prop:NLlipschitz}
corresponding to the fixed values of $N$ and $\alpha$ and to $m:=|E|+1$.
We remark that with this choice $\Lambda_1$ depends only on the set $E$.
We will consider also the sets $\ftil$ obtained by translating $F_h$ in such a way that $\alpha(F_h,E)=|\ftil\triangle E|$
(clearly $\mathcal{J}_h(\ftil)=\mathcal{J}_h(F_h)$).

\smallskip
\noindent
{\it Step 1.}
We claim that, if $\Lambda_2$ is sufficiently large (depending on $\Lambda_1$, but not on $h$),
then $|F_h|=|E|$ for every $h$ large enough.
This can be deduced by adapting an argument from \cite[Section~2]{EspFus} (see also \cite[Proposition~2.7]{AceFusMor}).
Indeed, assume by contradiction that there exist $\Lambda_h\to\infty$
and $F_h$ solution to the minimum problem \eqref{penalizzatoL1} with $\Lambda_2$ replaced by $\Lambda_h$
such that $|F_h|<|E|$ (a similar argument can be performed in the case $|F_h|>|E|$).
Up to subsequences, we have that $F_h\to F_0$ in $L^1_{\rm loc}$ and $|F_h|\to|E|$.

As each set $F_h$ minimizes the functional
$$
\f(F) + \Lambda_1\sqrt{\bigl( \alpha(F,E)-\e_h \bigr)^2+\e_h}
$$
in $B_{R_h}$ under the constraint $|F|=|F_h|$,
it is easily seen that $F_h$ is a quasi-minimizer of the perimeter with volume constraint,
so that by the regularity result contained in \cite[Theorem~1.4.4]{Rig} we have that
the $(N-1)$-dimensional density of $\partial^*F_h$ is uniformly bounded from below by a constant
independent of $h$.
This observation implies that we can assume without loss of generality that the limit set $F_0$ is not empty
and that there exists a point $x_0\in\partial^*F_0$, so that, by repeating an argument contained in \cite{EspFus},
we obtain that given $\e>0$ we can find $r>0$ and $\bar{x}\in\R^N$ such that
$$
|F_h\cap B_{r/2}(\bar{x})| < \e r^N,
\quad
|F_h\cap B_r(\bar{x})| > \frac{\omega_Nr^N}{2^{N+2}}
$$
for every $h$ sufficiently large (and we assume $\bar{x}=0$ for simplicity).

Now we modify $F_h$ in $B_r$ by setting $G_h:=\Phi_h(F_h)$, where $\Phi_h$ is the bilipschitz map
$$
\Phi_h(x):=
\left\{
  \begin{array}{ll}
    \bigl( 1-\sigma_h(2^N-1) \bigr)x                & \hbox{if }|x|\leq\frac{r}{2}, \\
    x+\sigma_h\bigl( 1-\frac{r^N}{|x|^N} \bigr)x    & \hbox{if } \frac{r}{2} < x < r,\\
    x                                               & \hbox{if } |x|\geq r,
  \end{array}
\right.
$$
and $\sigma_h\in(0,\frac{1}{2^N})$.
It can be shown (see \cite[Section~2]{EspFus}, \cite[Proposition~2.7]{AceFusMor} for details)
that $\e$ and $\sigma_h$ can be chosen in such a way that $|G_h|=|E|$, and moreover
there exists a dimensional constant $C>0$ such that
\begin{align*}
J_{\Lambda_h}(F_h)-J_{\Lambda_h}(G_h) \geq
\sigma_h\Bigl( C\Lambda_hr^N - (2^NN+C\gamma+C\Lambda_1)\p(F_h;B_r)\Bigr)
\end{align*}
(where $J_{\Lambda_h}$ denotes the functional in \eqref{penalizzatoL1} with $\Lambda_2$ replaced by $\Lambda_h$).
This contradicts the minimality of $F_h$ for $h$ sufficiently large.

\smallskip
\noindent
{\it Step 2.}
We now show that
\begin{equation} \label{claim1minL1}
\lim_{h\to+\infty}\alpha(F_h,E)=0.
\end{equation}
Indeed, by Lemma~\ref{lemma:stimaper} we have that
$$
\p(E) \leq \p(\ftil) + C_E|\ftil\triangle E|,
$$
while by Proposition~\ref{prop:NLlipschitz}
$$
|\nl(E)-\nl(\ftil)| \leq c_0 |\ftil\triangle E|.
$$
Combining the two estimates above, using the minimality of $F_h$ and recalling that $|F_h|=|E|$ we deduce
\begin{align*}
\p(\ftil) + \gamma\nl(\ftil) &+ \Lambda_1\sqrt{\bigl( |\ftil\triangle E|-\e_h \bigr)^2+\e_h}
    = \mathcal{J}_h(F_h)
    \leq \mathcal{J}_h(E) \\
&   = \p(E) + \gamma\nl(E) + \Lambda_1\sqrt{\e_h^2+\e_h} \\
&   \leq \p(\ftil) + \gamma\nl(\ftil) + (C_E+c_0\gamma)|\ftil\triangle E| + \Lambda_1\sqrt{\e_h^2+\e_h},
\end{align*}
which yields
$$
\Lambda_1\sqrt{\bigl( |\ftil\triangle E|-\e_h \bigr)^2+\e_h}
\leq
(C_E+c_0\gamma)|\ftil\triangle E| + \Lambda_1\sqrt{\e_h^2+\e_h}.
$$
Passing to the limit as $h\to+\infty$, we conclude that
$$
\Lambda_1\limsup_{h\to+\infty}|\ftil\triangle E| \leq (C_E+c_0\gamma)\limsup_{h\to+\infty}|\ftil\triangle E|,
$$
which implies $|\ftil\triangle E|\to0$ by the choice of $\Lambda_1$ in \eqref{eq:lambda1}.
Hence \eqref{claim1minL1} is proved,
and this shows in particular that $\chi_{\ftil}\to\chi_E$ in $L^1(\R^N)$.

\smallskip
\noindent
{\it Step 3.}
Each set $F_h$ is an $(\omega,r_0)$-minimizer of the area functional (see Definition~\ref{def:quamin}),
for suitable $\omega>0$ and $r_0>0$ independent of $h$.
Indeed, choose $r_0$ such that $\omega_N{r_0}^N\leq 1$,
and consider any ball $B_r(x)$ with $r\leq r_0$
and any finite perimeter set $F$ such that $F\triangle F_h\subset\subset B_r(x)$.
We have
$$
|\nl(F)-\nl(F_h)|\leq c_0|F\triangle F_h|
$$
by Proposition~\ref{prop:NLlipschitz}, where $c_0$ is the same constant as before
since we can bound the volume of $F$ by
$|F|\leq |F_h|+\omega_N{r_0}^N\leq |E| + 1$.
Moreover
\begin{align*}
\p(F)-\p(F&\cap B_{R_h})
    = \int_{\partial^*F\setminus B_{R_h}}1\,\dd\hn(x) - \int_{\partial^*(F\cap B_{R_h})\cap\partial B_{R_h}}1\,\dd\hn(x)\\
&   \geq \int_{\partial^*F\setminus B_{R_h}}\frac{x}{|x|}\cdot\nu_F\,\dd\hn(x)
    - \int_{\partial^*(F\cap B_{R_h})\cap\partial B_{R_h}}\frac{x}{|x|}\cdot\nu_{F\cap B_{R_h}}\,\dd\hn(x)\\
&   = \int_{\partial^*(F\setminus B_{R_h})}\frac{x}{|x|}\cdot\nu_{F\setminus B_{R_h}}\,\dd\hn(x)
    = \int_{F\setminus B_{R_h}} \div\,\frac{x}{|x|}\,\dd x \geq0.
\end{align*}
Hence, as $F_h$ is a minimizer of $\mathcal{J}_h$ among sets contained in $B_{R_h}$, we deduce
\begin{align*}
\p(F_h)
&   \leq \p(F\cap B_{R_h}) + \gamma\bigl(\nl(F\cap B_{R_h})-\nl(F_h)\bigr) + \Lambda_2 \big||F\cap B_{R_h}|-|E|\big| \\
&\hspace{.5cm}
    + \Lambda_1 \sqrt{\bigl( \alpha(F\cap B_{R_h},E)-\e_h \bigr)^2+\e_h}
    - \Lambda_1 \sqrt{\bigl( \alpha(F_h,E)-\e_h \bigr)^2+\e_h}\\
&   \leq \p(F) + \bigl(c_0\gamma + \Lambda_1 + \Lambda_2\bigr)|(F\cap B_{R_h})\triangle F_h|\\
&   \leq \p(F) + \bigl(c_0\gamma + \Lambda_1 + \Lambda_2\bigr)|F\triangle F_h|
\end{align*}
for $h$ large enough.
This shows that $F_h$ is an $(\omega,r_0)$-minimizer of the area functional
with $\omega=c_0\gamma+\Lambda_1+\Lambda_2$
(and the same holds obviously also for $\ftil$).

Hence, by Theorem~\ref{teo:quamin} and recalling that $\chi_{\ftil}\to\chi_E$ in $L^1$,
we deduce that for $h$ sufficiently large $\ftil$ is a set of class $C^{1,\frac12}$ and
$$
\partial\ftil = \{ x+\psi_h(x)\nu_E(x) \,:\, x\in\partial E \}
$$
for some $\psi_h$ such that $\psi_h\to0$ in $C^{1,\beta}(\partial E)$ for every $\beta\in(0,\frac12)$.
We remark also that the sets $\ftil$ are uniformly bounded, and for $h$ large enough $\ftil\subset\subset B_{R_h}$:
in particular, $\ftil$ solves the minimum problem \eqref{penalizzatoL1}.

\smallskip
\noindent
{\it Step 4.}
We now claim that
\begin{equation} \label{claim2minL1}
\lim_{h\to+\infty}\frac{\alpha(F_h,E)}{\e_h}=1.
\end{equation}
Indeed, assuming by contradiction that $|\alpha(F_h,E)-\e_h|\geq\sigma\e_h$ for some $\sigma>0$ and for infinitely many $h$,
we would obtain
\begin{align*}
\f(F_h) + \Lambda_1\sqrt{\sigma^2\e_h^2+\e_h}
& \leq \f(F_h) + \Lambda_1\sqrt{\bigl(\alpha(F_h,E)-\e_h\bigr)^2+\e_h} \\
& \leq \f(E_h) + \Lambda_1\sqrt{\e_h}
< \f(E) + \frac{C_0}{4}\,\e_h^2 + \Lambda_1\sqrt{\e_h} \\
& \leq \f(\ftil) + \frac{C_0}{4}\,\e_h^2 + \Lambda_1\sqrt{\e_h}
\end{align*}
where the second inequality follows from the minimality of $F_h$,
the third one from \eqref{contraminL1}
and the last one from Theorem~\ref{teo:mininf}.
This shows that
$$
\Lambda_1\sqrt{\sigma^2\e_h^2+\e_h} \leq \frac{C_0}{4}\,\e_h^2 + \Lambda_1\sqrt{\e_h}\,,
$$
which is a contradiction for $h$ large enough.

\smallskip
\noindent
{\it Step 5.}
We now show the existence of constants $\lambda_h\in\R$ such that
\begin{equation}\label{claim3minL1}
\| H_{\partial\ftil} + 2\gamma v_{\ftil} - \lambda_h \|_{L^\infty(\partial\ftil)}\leq 4\Lambda_1\sqrt{\e_h}\to0.
\end{equation}
We first observe that the function $f_h(t):=\sqrt{(t-\e_h)^2+\e_h}$ satisfies
\begin{equation} \label{claim4minL1}
|f_h(t_1)-f_h(t_2)| \leq 2\sqrt{\e_h}\,|t_1-t_2|
\qquad\text{if}\quad |t_i-\e_h|\leq\e_h.
\end{equation}
Hence for every set $F\subset\R^N$ with $|F|=|E|$, $F\subset B_{R_h}$ and $|\alpha(F,E)-\e_h|\leq\e_h$
we have
\begin{align}\label{claim5minL1}
\f(\ftil)
& \leq \f(F) + \Lambda_1
    \Bigl( \sqrt{\bigl(\alpha(F,E)-\e_h\bigr)^2+\e_h} - \sqrt{\bigl(\alpha(\ftil,E)-\e_h\bigr)^2+\e_h} \Bigr) \nonumber\\
& \leq \f(F) + 2\Lambda_1\sqrt{\e_h}\,|\alpha(F,E)-\alpha(\ftil,E)| \\
& \leq \f(F) + 2\Lambda_1\sqrt{\e_h}\,|F\triangle\ftil| \nonumber
\end{align}
where we used the minimality of $\ftil$ in the first inequality,
and \eqref{claim4minL1} combined with the fact that $|\alpha(\ftil,E)-\e_h|\leq\e_h$ for $h$ large (which, in turn, follows by \eqref{claim2minL1}) in the second one.

Consider now any variation $\Phi_t$, as in Definition~\ref{def:flow},
preserving the volume of the set $\ftil$,
associated with a vector field $X$.
For $|t|$ sufficiently small we can plug the set $\Phi_t(\ftil)$ in the inequality \eqref{claim5minL1}:
$$
\f(\ftil) \leq \f(\Phi_t(\ftil)) + 2\Lambda_1\sqrt{\e_h}\,|\Phi_t(\ftil)\triangle\ftil|,
$$
which gives
$$
\f(\Phi_t(\ftil)) - \f(\ftil) + 2\Lambda_1\sqrt{\e_h}\,|t|\int_{\partial\ftil}|X\cdot\nu_{\ftil}|\,\dd\hn + o(t)\geq0
$$
for $|t|$ sufficiently small.
Hence, dividing by $t$ and letting $t\to0^+$ and $t\to0^-$, we get
$$
\bigg|\int_{\partial\ftil} \bigl( H_{\partial\ftil} + 2\gamma v_{\ftil} \bigr) X\cdot\nu_{\ftil}\,\dd\hn\bigg|
\leq
2\Lambda_1\sqrt{\e_h} \int_{\partial\ftil}|X\cdot\nu_{\ftil}|\,\dd\hn,
$$
and by density
$$
\bigg|\int_{\partial\ftil} \bigl( H_{\partial\ftil} + 2\gamma v_{\ftil} \bigr) \vphi\,\dd\hn\bigg|
\leq
2\Lambda_1\sqrt{\e_h} \int_{\partial\ftil}|\vphi|\,\dd\hn
$$
for every $\vphi\in C^\infty(\partial\ftil)$ with $\int_{\partial\ftil}\vphi\,\dd\hn=0$.
In turn, this implies \eqref{claim3minL1} by a simple functional analysis argument.

\smallskip
\noindent
{\it Step 6.}
We are now close to the end of the proof.
Recall that on $\partial E$
\begin{equation}\label{ELL11}
H_{\partial E} = \lambda - 2\gamma v_E
\end{equation}
for some constant $\lambda$, while by \eqref{claim3minL1}
\begin{equation}\label{ELL12}
H_{\partial \ftil} = \lambda_h - 2\gamma v_{\ftil} + \rho_h,
\qquad\text{with }\rho_h\to0\text{ uniformly.}
\end{equation}
Observe now that, since the functions $v_{\ftil}$ are equibounded in $C^{1,\beta}(\R^N)$ for some $\beta\in(0,1)$
(see Proposition~\ref{prop:ve}) and they converge pointwise to $v_E$ since $\chi_{\ftil}\to\chi_E$ in $L^1$, we have that
\begin{equation} \label{ELL13}
v_{\ftil}\to v_E
\qquad\text{in }C^1(\overline{B}_R)\text{ for every }R>0.
\end{equation}

We consider a cylinder $C=B'\times]-L,L[$, where $B'\subset\R^{N-1}$ is a ball centered at the origin,
such that in a suitable coordinate system we have
\begin{align*}
\ftil\cap C   &= \{ (x',x_N)\in C : x'\in B',\, x_N<g_h(x') \},\\
E\cap C     &= \{ (x',x_N)\in C : x'\in B',\, x_N<g(x') \}
\end{align*}
for some functions $g_h\to g$ in $C^{1,\beta}(\overline{B'})$ for every $\beta\in(0,\frac12)$.
By integrating \eqref{ELL12} on $B'$ we obtain
\begin{align*}
\lambda_h&\LL^{N-1}(B') - 2\gamma\int_{B'} v_{\ftil}(x',g_h(x'))\,\dd\LL^{N-1}(x') + \int_{B'}\rho_h(x',g_h(x'))\,\dd\LL^{N-1}(x')\\
&   = - \int_{B'} \div \biggl( \frac{\nabla g_h}{\sqrt{1+|\nabla g_h|^2}} \biggr) \,\dd\LL^{N-1}(x')
    = -\int_{\partial B'} \frac{\nabla g_h}{\sqrt{1+|\nabla g_h|^2}} \cdot \frac{x'}{|x'|} \,\dd\mathcal{H}^{N-2}\,,
\end{align*}
and the last integral in the previous expression converges as $h\to0$ to
\begin{align*}
-\int_{\partial B'} \frac{\nabla g}{\sqrt{1+|\nabla g|^2}} &\cdot \frac{x'}{|x'|} \,\dd\mathcal{H}^{N-2}
    = - \int_{B'} \div \biggl( \frac{\nabla g}{\sqrt{1+|\nabla g|^2}} \biggr) \,\dd\LL^{N-1}(x') \\
&   = \lambda\LL^{N-1}(B') - 2\gamma\int_{B'} v_{E}(x',g(x'))\,\dd\LL^{N-1}(x')\,,
\end{align*}
where the last equality follows by \eqref{ELL11}.
This shows, recalling \eqref{ELL13} and that $\rho_h$ tends to 0 uniformly, that $\lambda_h\to\lambda$,
which in turn implies, by \eqref{ELL11}, \eqref{ELL12} and \eqref{ELL13},
$$
H_{\partial\ftil}(\cdot+\psi_h(\cdot)\nu_E(\cdot))\to H_{\partial E}
\qquad\text{in }L^\infty(\partial E).
$$
By Lemma~\ref{lemma:curvature} we conclude that
$\psi_h\in W^{2,p}(\partial E)$ for every $p\geq1$ and $\psi_h\to0$ in $W^{2,p}(\partial E)$.

Finally, by minimality of $\ftil$ we have
\begin{align*}
\f(\ftil)
& \leq \f(\ftil) + \Lambda_1\sqrt{\bigl( \alpha(\ftil,E)-\e_h \bigr)^2+\e_h} - \Lambda_1\sqrt{\e_h} \\
& \leq \f(E_h)
< \f(E) + \frac{C_0}{4}\,\e_h^2
\leq \f(E) + \frac{C_0}{2}\,\bigl(\alpha(\ftil,E)\bigr)^2
\end{align*}
where we used \eqref{contraminL1} in the third inequality
and \eqref{claim2minL1} in the last one.
This is the desired contradiction with the conclusion of Theorem~\ref{teo:minw}.
\end{proof}

\begin{remark}
It is important to remark that in the arguments of this section
we have not made use of the assumption of strict positivity of the second variation:
the quantitative $L^1$-local minimality follows in fact just from the $W^{2,p}$-local minimality.
\end{remark}


\section{Local minimality of the ball} \label{sect:ball}

In this section we will obtain Theorem~\ref{teo:minlocpalla} as a consequence of Theorem~\ref{teo:minl1},
by computing the second variation of the ball and studying the sign of the associated quadratic form.

\subsection{Recalls on spherical harmonics}

We first recall some basic facts about spherical harmonics, which are needed in our calculation.
We refer to \cite{Gro} for an account on this topic.

\begin{definition}
A \emph{spherical harmonic of dimension} $N$ is the restriction to $S^{N-1}$ of a \emph{harmonic polynomial} in $N$ variables,
\emph{i.e.} a homogeneous polynomial $p$ with $\Delta p=0$.
\end{definition}

We will denote by $\mathcal{H}^{N}_{d}$ the set of all spherical harmonics of dimension $N$
that are obtained as restrictions to $S^{N-1}$ of homogeneous polynomials of degree $d$.
In particular $\mathcal{H}^{N}_{0}$ is the space of constant functions, and $\mathcal{H}^{N}_{1}$ is generated by the coordinate functions. The basic properties of spherical harmonics that we need are listed in the following theorem.

\begin{theorem} \label{teo:sphar}
It holds:

\begin{enumerate}
\item for each $d\in \N$, $\mathcal{H}^{N}_{d}$ is a finite dimensional vector space.

\item If $F\in \mathcal{H}^{N}_{d}$, $G\in \mathcal{H}^{N}_{e}$ and $d\neq e$, then $F$ and $G$ are orthogonal (in the $L^2$-sense).

\item If $F\in \mathcal{H}^{N}_{d}$ and $d\neq 0$, then
			$$
				\int_{S^{N-1}} F\,\dd \hn=0.
			$$
			
\item If $( H_{d}^{1},\dots, H_{d}^{\dim(\mathcal{H}^{N}_{d})})$ is an orthonormal basis of $\mathcal{H}^{N}_{d}$ for every $d\geq0$, then this sequence is complete, \emph{i.e.} every $F\in L^2(S^{N-1})$ can be written in the form
			\begin{equation} \label{eq:sph}
				F=\sum_{d=0}^{\infty} \sum_{i=1}^{\dim(\mathcal{H}^{N}_{d})} c_{d}^{i} H_{d}^{i}\,,
			\end{equation}
    where $c_{d}^{i}:=\langle F, H_{d}^{i} \rangle_{L^2}$.

\item If $H_{d}^{i}$ are as in (4) and $F, G\in L^2(S^{N-1})$ are such that
			$$
			F=\sum_{d=0}^{\infty} \sum_{i=1}^{\dim(\mathcal{H}^{N}_{d})} c_{d}^{i} H_{d}^{i} \,,
            \qquad
            G=\sum_{d=0}^{\infty} \sum_{i=1}^{\dim(\mathcal{H}^{N}_{d})} e_{d}^{i} H_{d}^{i}\,,
			$$
            then
			$$
			\langle F, G \rangle_{L^{2}} = \sum_{d=0}^{\infty} \sum_{i=1}^{\dim(\mathcal{H}^{N}_{d})} c_{d}^{i}e_{d}^{i}.
			$$

\item Spherical harmonics are eigenfunctions of the \emph{Laplace-Beltrami operator} $\Delta_{S^{N-1}}$. More precisely, if $H\in\mathcal{H}^N_d$ then
            $$
            -\Delta_{S^{N-1}}H = d(d+N-2)H.
            $$

\item If $F$ is a $C^2$ function on $S^{N-1}$ represented as in \eqref{eq:sph}, then
			$$
			\int_{S^{N-1}} |D_{\tau}F|^2\,\dd \hn(x) \,=\, \sum_{d=0}^{\infty}\sum_{i=1}^{\dim(\mathcal{H}^{N}_{d})}d(d+N-2)(c_{d}^{i})^2.
			$$
\end{enumerate}
\end{theorem}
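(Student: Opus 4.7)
The plan is to derive all seven items from two ingredients: the spherical decomposition of the Euclidean Laplacian, and standard Hilbert--space completeness arguments. I would organize the argument in the order (1), (6), (2), (3), (4), (5), (7), since later items rely on earlier ones.

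Item (1) is immediate: $\mathcal{H}^N_d$ is the kernel of the restriction of $\Delta$ to the finite-dimensional space of homogeneous polynomials of degree $d$ in $N$ variables, whose dimension is $\binom{N+d-1}{d}$. For (6), writing $x\in\R^N\setminus\{0\}$ in polar form $x=r\theta$ with $r=|x|$ and $\theta\in S^{N-1}$, one has the well-known identity
\[
\Delta u = \partial_r^2 u + \frac{N-1}{r}\partial_r u + \frac{1}{r^2}\Delta_{S^{N-1}}u.
\]
Applied to $u(x)=r^d H(\theta)$ with $H\in\mathcal{H}^N_d$, the equation $\Delta u = 0$ in $\R^N\setminus\{0\}$ yields $\Delta_{S^{N-1}}H = -d(d+N-2)H$. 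Green's identity on the closed manifold $S^{N-1}$ gives $\int_{S^{N-1}}(F\,\Delta_{S^{N-1}}G - G\,\Delta_{S^{N-1}}F)\,\dd\hn = 0$; combining this with (6) when $F\in\mathcal{H}^N_d$, $G\in\mathcal{H}^N_e$, and $d\neq e$, the distinctness of the eigenvalues forces $\int_{S^{N-1}}FG\,\dd\hn=0$, proving (2). Part (3) is the special case $e=0$ of (2), since constants lie in $\mathcal{H}^N_0$.

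The core of the proof is (4). The key algebraic lemma is that every homogeneous polynomial $p$ of degree $d$ in $N$ variables decomposes uniquely as $p = h + |x|^2 q$ with $h\in\mathcal{H}^N_d$ and $q$ a homogeneous polynomial of degree $d-2$; iterating, the restriction of $p$ to $S^{N-1}$ (where $|x|^2 = 1$) becomes a finite sum of spherical harmonics of degrees $d, d-2,\dots$. This decomposition reduces to the surjectivity of $\Delta$ as a map between the corresponding spaces of homogeneous polynomials, which can be established by showing that its transpose with respect to the Fischer inner product $\langle p, q\rangle := [p(\partial)\bar q](0)$ is injective. Combining this polynomial decomposition with the density of polynomials in $C(S^{N-1})$ (Stone--Weierstrass, since polynomials separate points and contain the constants) and the density of $C(S^{N-1})$ in $L^2(S^{N-1})$, one obtains that finite linear combinations of spherical harmonics are dense in $L^2(S^{N-1})$; completeness of the orthonormal system $\{H^i_d\}$ then follows from general Hilbert-space theory.

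Finally, (5) is Parseval's identity, an immediate consequence of orthonormality and (4). For (7), integration by parts on the closed manifold $S^{N-1}$ gives
\[
\int_{S^{N-1}}|D_\tau F|^2\,\dd\hn = -\int_{S^{N-1}} F\,\Delta_{S^{N-1}}F\,\dd\hn,
\]
and expanding $F$ and $-\Delta_{S^{N-1}}F = \sum d(d+N-2)c^i_d H^i_d$ in the basis of (4), then applying Parseval, produces the stated formula; the termwise manipulation is justified for $F\in C^2(S^{N-1})$ by the fact that $\Delta_{S^{N-1}}F\in L^2(S^{N-1})$. I expect the principal obstacle to be the polynomial decomposition lemma underlying (4); once it is in hand, all remaining items follow from the polar form of $\Delta$ and elementary Hilbert-space arguments.
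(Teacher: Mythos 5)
The paper does not prove this theorem at all: it is stated as a collection of classical facts with a pointer to the reference \cite{Gro}, so there is no in-paper argument to compare against. Your outline is the standard textbook proof and is correct in all seven items. The logical ordering (1), (6), (2), (3), (4), (5), (7) is sound: the polar decomposition of $\Delta$ gives the eigenvalue identity, Green's identity on the closed manifold $S^{N-1}$ then yields orthogonality of distinct eigenspaces, and the decomposition $p = h + |x|^2 q$ (with surjectivity of $\Delta$ on homogeneous polynomials obtained via the Fischer pairing $\langle \Delta p, q\rangle = \langle p, |x|^2 q\rangle$ and injectivity of multiplication by $|x|^2$) combined with Stone--Weierstrass gives completeness. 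Two minor points worth making explicit if you were to write this out in full: for (1) you should note that the restriction map from homogeneous polynomials of degree $d$ to functions on $S^{N-1}$ is injective (a homogeneous polynomial vanishing on the sphere vanishes identically), so that $\mathcal{H}^N_d$ really inherits the finite dimension of the polynomial space; and in (7) the identification of the Fourier coefficients of $-\Delta_{S^{N-1}}F$ as $d(d+N-2)c^i_d$ should be justified by the self-adjointness of $\Delta_{S^{N-1}}$ together with item (6), exactly as you indicate. Neither is a gap, just a detail to record.
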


We recall also the following important result in the theory of spherical harmonics.

\begin{theorem}[Funk-Hecke Formula] \label{teo:fuhe}
Let $f: (-1,1)\rightarrow \R$ such that
$$
\int_{-1}^{1} |f(t)|(1-t^2)^\frac{N-3}{2} \,\dd t < \infty\,.
$$
Then if $H\in \mathcal{H}^{N}_{d}$ and $x_{0}\in S^{N-1}$ it holds
$$
\int_{S^{N-1}} f(\langle x_{0}, x \rangle) H(x) \,\dd\hn(x) \,=\, \mu_{d}H(x_{0}) \,,
$$
where the coefficient $\mu_{d}$ is given by
$$
\mu_{d} = (N-1)\omega_{N-1}\int_{-1}^{1} P_{N,d}(t)f(t)(1-t^2)^{\frac{N-3}{2}}\,\dd t.
$$
Here $P_{N,d}$ is the \emph{Legendre polynomial} of dimension $N$ and degree $d$ given by
$$
P_{N,d}(t) = (-1)^{d}\frac{\Gamma(\frac{N-1}{2})}{2^{d}\Gamma(d+\frac{N-1}{2})}(1-t^2)^{-\frac{N-3}{2}}\Big( \frac{\dd}{\dd t} \Big)^d(1-t^2)^{d+\frac{N-3}{2}}\,,
$$
where $\Gamma(x):=\int_{0}^{\infty}t^{x-1}e^{-t}\dd t$ is the Gamma function.
\end{theorem}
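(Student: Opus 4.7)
The plan is to recognize the integral on the left as the value at $x_0$ of a bounded linear operator $T_f : L^2(S^{N-1}) \to L^2(S^{N-1})$ defined by
\begin{equation*}
(T_f H)(x_0) \,:=\, \int_{S^{N-1}} f(\langle x_0, x\rangle)\, H(x)\,\dd\hn(x),
\end{equation*}
whose kernel depends only on the inner product $\langle x_0, x\rangle$ and is therefore bi-invariant under the action of $SO(N)$. A direct change of variables then shows $T_f(H \circ R) = (T_f H) \circ R$ for every $R \in SO(N)$. Since each space $\mathcal{H}^N_d$ is $SO(N)$-invariant (being an eigenspace of the rotation-invariant operator $\Delta_{S^{N-1}}$) and is well known to be \emph{irreducible} as an $SO(N)$-representation, Schur's lemma forces $T_f$ to act as a scalar on $\mathcal{H}^N_d$. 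This yields a real number $\mu_d$ with $T_f H = \mu_d H$ for every $H \in \mathcal{H}^N_d$, which is the qualitative content of the formula.

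To pin down $\mu_d$, I would evaluate both sides on the \emph{zonal} spherical harmonic of degree $d$ based at the north pole, $H(x) := P_{N,d}(\langle e_N, x\rangle)$, which belongs to $\mathcal{H}^N_d$ by the classical characterization of $P_{N,d}$ as (a constant multiple of) the zonal harmonic, with the normalization $P_{N,d}(1) = 1$. Setting $x_0 := e_N$ and using $H(e_N) = 1$ in the identity $T_f H(e_N) = \mu_d H(e_N)$ gives
\begin{equation*}
\mu_d \,=\, \int_{S^{N-1}} f(\langle e_N, x\rangle)\, P_{N,d}(\langle e_N, x\rangle)\,\dd\hn(x).
\end{equation*}

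The announced closed form for $\mu_d$ follows by slicing $S^{N-1}$ along the $x_N$-axis. Parametrizing a generic point as $x = (\sqrt{1-t^2}\,\omega, t)$ with $t \in (-1,1)$ and $\omega \in S^{N-2}$, the surface measure decomposes as $(1-t^2)^{(N-3)/2}\,\dd t\, \dd\mathcal{H}^{N-2}(\omega)$. Since the integrand depends only on $t$, integrating out $\omega$ produces the total $(N-2)$-dimensional area $(N-1)\omega_{N-1}$ of $S^{N-2}$, leaving exactly the one-dimensional integral in the statement. The integrability hypothesis on $f$ is used precisely to make the resulting $t$-integral absolutely convergent, which also justifies all the Fubini switches along the way.

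The main obstacle in this proposal is structural rather than computational: the argument relies on two non-elementary inputs from the theory of spherical harmonics, namely the irreducibility of each $\mathcal{H}^N_d$ under $SO(N)$ (which is what collapses the operator $T_f$ on each eigenspace to a single real number via Schur's lemma) and the identification of the zonal harmonic at $e_N$ with the Legendre polynomial $P_{N,d}$ together with the normalization $P_{N,d}(1) = 1$. Both ingredients are standard and available in \cite{Gro}; once they are accepted, everything else is a routine slicing and change of variables on the sphere.
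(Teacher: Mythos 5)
The paper itself contains no proof of this statement: the Funk--Hecke formula is recalled from the reference \cite{Gro} and used as a black box in Section~5, where the only computation actually performed is the evaluation of the coefficients $\mu_d^{N,\alpha}$ for the particular kernel $f(t)=(2(1-t))^{-\alpha/2}$ ``by direct computation just integrating by parts''. So there is no argument of the authors to compare yours with; what I can say is that your proof is the standard representation-theoretic one and is essentially correct. The slicing of the surface measure as $(1-t^2)^{(N-3)/2}\,\dd t\,\dd\mathcal{H}^{N-2}(\omega)$, the identity $\mathcal{H}^{N-2}(S^{N-2})=(N-1)\omega_{N-1}$, and the evaluation on the zonal harmonic $P_{N,d}(\langle e_N,\cdot\rangle)$ with $P_{N,d}(1)=1$ are all right; and the integrability hypothesis does exactly what you say, since $\sup_{x_0}\int_{S^{N-1}}|f(\langle x_0,x\rangle)|\,\dd\hn(x)=(N-1)\omega_{N-1}\int_{-1}^1|f|(1-t^2)^{\frac{N-3}{2}}\,\dd t<\infty$ even makes $T_f$ bounded on $L^2$ by Schur's test, although boundedness on the finite-dimensional space $\mathcal{H}^N_d$ is all you need.

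One step deserves a touch-up. Schur's lemma for \emph{real} representations only says that the commutant of an irreducible representation is a division algebra over $\R$; it forces scalars only when the representation is absolutely irreducible. This holds for $\mathcal{H}^N_d$ when $N\geq3$, but fails for $N=2$ and $d\geq1$: there $\mathcal{H}^2_d$ is two-dimensional with commutant isomorphic to $\C$, so an operator commuting with the $SO(2)$-action need not be a multiple of the identity. Since the theorem is stated (and used in the paper) for all $N\geq2$, the cleanest uniform fix is to note that $T_f$ restricted to $\mathcal{H}^N_d$ is self-adjoint, because its kernel $f(\langle x_0,x\rangle)$ is real and symmetric in $(x_0,x)$; hence it is diagonalizable with real eigenvalues, its eigenspaces are $SO(N)$-invariant by the intertwining property, and irreducibility then forces a single eigenvalue $\mu_d$. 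With that one-line adjustment, and granting the two classical inputs you explicitly flag (irreducibility of $\mathcal{H}^N_d$ and the identification of the zonal harmonic with $P_{N,d}$, both in \cite{Gro}), the proof is complete.
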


\subsection{Second variation of the ball}

The quadratic form \eqref{eq:fquad} associated with the second variation of $\f$ at the ball $B_{R}$, computed at a function $\tilde{\vphi}\in \widetilde{H}^{1}(\partial B_{R})$ is
\begin{align*}
\partial^{2}\f(B_{R})[\tilde{\vphi}] & =
    \int_{\partial B_{R}}\Bigl( |D_{\tau}\tilde{\vphi}(x)|^2- \frac{N-1}{R^2}\,\tilde{\vphi}^2(x) \Bigr)\,\dd \hn(x) \\
													   &\hspace{0.5cm} +\; 2\gamma\,\int_{\partial B_{R}}\int_{\partial B_{R}} \frac{1}{|x-y|^\alpha}\tilde{\vphi}(x)\tilde{\vphi}(y)\,\dd \hn(x)\,\dd \hn(y) \\
														&\hspace{0.5cm} +\; 2\gamma\,\int_{\partial B_{R}}\Big( \int_{B_{R}} -\alpha\frac{\langle x-y, \frac{x}{|x|}\rangle}{|x-y|^{\alpha+2}} \,\dd y \Big)\tilde{\vphi}^2(x)\,\dd
																	\hn(x).
\end{align*}
Since we want to obtain a sign condition of $\partial^{2}\f(B_{R})[\tilde{\vphi}]$ in terms of the radius $R$, we first make a change of variable:
\begin{align}\label{eq:secvarball}
\partial^{2}\f(B_{R})[\tilde{\vphi}] & = R^{N-3}\int_{\partial B_{1}}(|D_{\tau}\vphi(x)|^2-(N-1)\vphi^2(x))\,\dd \hn(x) \nonumber \\
														&\hspace{0.5cm} +\; 2\gamma R^{2N-2-\alpha}\,\int_{\partial B_{1}}\int_{\partial B_{1}} \frac{1}{|x-y|^\alpha}\vphi(x)\vphi(x)\,\dd \hn(x)\,\dd \hn(y) \\
														&\hspace{0.5cm} +\; 2\gamma R^{2N-2-\alpha}\,\int_{\partial B_{1}}\Big( \int_{B_{1}} -\alpha\frac{\langle x-y, x\rangle}{|x-y|^{\alpha+2}} \,\dd y \Big)\vphi^2(x)	
																		\,\dd \hn(x), \nonumber
\end{align}
where the function $\vphi\in\widetilde{H}^1(S^{N-1})$ is defined as $\vphi(x):=\tilde{\vphi}(Rx)$.
Since we are only interested in the sign of the second variation, which is continuous with respect to the strong convergence in $\widetilde{H}^{1}(S^{N-1})$, we can assume $\vphi \in C^{2}(S^{N-1})\cap T^\bot(S^{N-1})$.

The idea to compute the second variation at the ball is to expand $\vphi$ with respect to an orthonormal basis of spherical harmonics, as in \eqref{eq:sph}. First of all we notice that if $\vphi \in T^\bot(S^{N-1})$, then its harmonic expansion does not contain spherical harmonics of order $0$ and $1$. Indeed, harmonics of order $0$ are constant functions, that are not allowed by the null average condition. Moreover $\mathcal{H}^{N}_{1} = T(S^{N-1})$, because $\nu_{S^{N-1}}(x)=x$, and the functions $x_{i}$ form an orthonormal basis of $\mathcal{H}_{1}^{N}$.
Hence we can write the harmonic expansion of $\vphi\in C^{2}(S^{N-1})\cap T^\bot(S^{N-1})$ as follows:
$$
\vphi = \sum_{d=2}^{\infty} \sum_{i=1}^{\dim(\mathcal{H}^{N}_{d})} c_{d}^{i} H_{d}^{i}\,,
$$
where $( H_{d}^{1},\dots, H_{d}^{\dim(\mathcal{H}^{N}_{d})})$ is an orthonormal basis of $\mathcal{H}^{N}_{d}$ for each $d\in \N$.
We can now compute each term appearing in \eqref{eq:secvarball} as follows:
the first term, by property (7) of Theorem~\ref{teo:sphar}, is
$$
\int_{\partial B_{1}}(|D_{\tau}\vphi|^2-(N-1)\vphi^2)\,\dd \hn = \sum_{d=2}^{\infty} \sum_{i=1}^{\dim(\mathcal{H}^{N}_{d})} \bigl(d(d+N-2) - (N-1)\bigr)(c_{d}^{i})^2.
$$
For the second term we want to use the Funk-Hecke Formula to compute the inner integral; so we define the function
$$
f(t):=\Big( 2(1-t) \Big)^{-\frac{\alpha}{2}}
$$
and we notice that
$$
|x-y|^{-\alpha}= f(\langle x,y \rangle) \qquad\mbox{for } x,y\in S^{N-1}\,,
$$
and that, for $\alpha\in(0,N-1)$, $f$ satisfies the integrability assumptions of Theorem \ref{teo:fuhe}. Hence for each $y\in S^{N-1}$
$$
\int_{\partial B_{1}} \frac{1}{|x-y|^\alpha}\vphi(x)\,\dd \hn(x) \,=\,  \sum_{d=2}^{\infty} \sum_{i=1}^{\dim(\mathcal{H}^{N}_{d})} \mu_{d}^{N,\alpha} c_d^i H_{d}^{i}(y)\,,
$$
where the coefficient
\begin{equation} \label{eq:cinesi}
\mu_{d}^{N,\alpha}\,:=\,2^{N-1-\alpha}\frac{(N-1)\omega_{N-1}}{2}\biggl( \prod_{i=0}^{d-1}\Big(\frac{\alpha}{2}+i\Big) \biggr)\frac{\Gamma(\frac{N-1-\alpha}{2})\Gamma(\frac{N-1}{2})}
{\Gamma(N-1-\frac{\alpha}{2}+d)}
\end{equation}
is obtained by direct computation just integrating by parts. Therefore
$$
\int_{\partial B_{1}}\int_{\partial B_{1}} \frac{1}{|x-y|^\alpha}\vphi(x)\vphi(y)\,\dd \hn(x)\,\dd \hn(y) = \sum_{d=2}^{\infty} \sum_{i=1}^{\dim(\mathcal{H}^{N}_{d})} \mu_{d}^{N,\alpha}
(c_{d}^{i})^2.
$$
For the last term of \eqref{eq:secvarball}, noticing that the integral
$$
\mathcal{I}^{N,\alpha} \,:=\, \int_{B_{1}} \frac{\langle x-y, x\rangle}{|x-y|^{\alpha+2}} \,\dd y
$$
is independent of $x\in S^{N-1}$, we get
$$
\int_{\partial B_{1}}\Big( \int_{B_{1}} -\alpha\frac{\langle x-y, x\rangle}{|x-y|^{\alpha+2}} \,\dd y \Big)\vphi^2(x)\,\dd \hn(x) = -\alpha\mathcal{I}^{N,\alpha}\sum_{d=2}^{\infty}
\sum_{i=1}^{\dim(\mathcal{H}^{N}_{d})} (c_{d}^{i})^2.
$$
Combining all the previous equalities with \eqref{eq:secvarball} we obtain
$$
\partial^{2}\f(B_{R})[\tilde{\vphi}]  =  \sum_{d=2}^{\infty} \sum_{i=1}^{\dim(\mathcal{H}^{N}_{d})} R^{N-3}(c_{d}^{i})^2\Big[ d(d+N-2) - (N-1) + 2\gamma R^{N+1-\alpha}\Big(\mu_{d}^{N,\alpha} -
			\alpha\mathcal{I}^{N,\alpha}\Big)\Big].
$$

\subsection{Local minimality of the ball}

From the above expression we deduce that the quadratic form $\partial^{2}\f(B_{R})$ is strictly positive on $T^\bot(\partial B_R)$,
that is, the second variation of $\f$ at $B_R$ is positive according to Definition~\ref{def:varpos},
if and only if
\begin{equation} \label{eq:condamm}
d(d+N-2) - (N-1) + 2\gamma R^{N+1-\alpha}\Big(\mu_{d}^{N,\alpha} - \alpha\mathcal{I}^{N,\alpha}\Big) > 0
\end{equation}
for all $d\geq 2$,
where the ``only if'' part is due to the fact that $\mathcal{H}^{N}_{d}\subset T^\bot(S^{N-1})$ for each $d\geq2$.
On the contrary, $\partial^{2}\f(B_{R})[\tilde{\vphi}]<0$ for some $\tilde{\vphi}\in T^\bot(\partial B_R)$
if and only if there exists $d\geq2$ such that the left-hand side of \eqref{eq:condamm} is negative.

We want to write \eqref{eq:condamm} as a condition on $R$.
Since $d(d+N-2) - (N-1)>0$ for $d\geq 2$, we have that \eqref{eq:condamm} is certainly satisfied if $\mu_{d}^{N,\alpha} - \alpha\mathcal{I}^{N,\alpha}>0$.
But this is not always the case, as the following lemma shows.

\begin{lemma}
The sequence $\mu_{d}^{N,\alpha}$ strictly decreases to 0 as $d\rightarrow\infty$.
\end{lemma}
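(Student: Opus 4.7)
The plan is to rewrite $\mu_d^{N,\alpha}$ in a form where the dependence on $d$ is isolated in a single ratio of Gamma functions, and then study that ratio directly. Using the identity $\prod_{i=0}^{d-1}\bigl(\tfrac{\alpha}{2}+i\bigr) = \Gamma(\tfrac{\alpha}{2}+d)/\Gamma(\tfrac{\alpha}{2})$, one can write
\begin{equation*}
\mu_d^{N,\alpha} = K_{N,\alpha}\,\frac{\Gamma\bigl(\tfrac{\alpha}{2}+d\bigr)}{\Gamma\bigl(N-1-\tfrac{\alpha}{2}+d\bigr)},
\end{equation*}
where $K_{N,\alpha}>0$ is a constant independent of $d$.

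For strict monotonicity I would compute the ratio of consecutive terms. Using $\Gamma(z+1)=z\Gamma(z)$ we get
\begin{equation*}
\frac{\mu_{d+1}^{N,\alpha}}{\mu_d^{N,\alpha}} = \frac{\tfrac{\alpha}{2}+d}{N-1-\tfrac{\alpha}{2}+d}\,.
\end{equation*}
Since $\alpha<N-1$ by hypothesis, the numerator is strictly smaller than the denominator for every $d\in\N$, so this ratio is strictly less than $1$, proving strict decrease. (Positivity of the terms is immediate because each factor $\tfrac{\alpha}{2}+i$ and each Gamma value in play is positive.)

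For the limit, I would invoke the standard asymptotic
\begin{equation*}
\frac{\Gamma(d+a)}{\Gamma(d+b)} = d^{a-b}\bigl(1+o(1)\bigr) \qquad\text{as }d\to\infty,
\end{equation*}
applied with $a=\tfrac{\alpha}{2}$ and $b=N-1-\tfrac{\alpha}{2}$, so that $a-b=\alpha-(N-1)<0$. This gives $\mu_d^{N,\alpha}\sim K_{N,\alpha}\,d^{\alpha-(N-1)}\to0$.

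There is no real obstacle here: once the product is rewritten as a Gamma ratio, both monotonicity and the vanishing limit are elementary consequences of the assumption $\alpha<N-1$, which is precisely what forces the exponent $a-b$ in the Gamma asymptotics to be negative. The only minor care needed is to keep track of positivity of $K_{N,\alpha}$ (which follows from $\alpha<N-1<2(N-1)$, so all Gamma arguments in the constant are positive) in order to conclude strict, rather than merely weak, monotonicity.
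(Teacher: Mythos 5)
Your proof is correct and follows essentially the same route as the paper: both derive the ratio $\mu_{d+1}^{N,\alpha}/\mu_d^{N,\alpha}=\bigl(\tfrac{\alpha}{2}+d\bigr)/\bigl(N-1-\tfrac{\alpha}{2}+d\bigr)<1$ for strict monotonicity, and both reduce the limit to a ratio of Gamma functions, with the paper writing out Stirling's formula where you invoke the equivalent standard asymptotic $\Gamma(d+a)/\Gamma(d+b)\sim d^{a-b}$.
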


\begin{proof}
First of all we note that
\begin{equation}\label{eq:coeff}
\mu_{d+1}^{N,\alpha} = \frac{\frac{\alpha}{2}+d}{N-1-\frac{\alpha}{2}+d} \, \mu_{d}^{N,\alpha}\,,
\end{equation}
hence the sequence $(\mu_{d}^{N,\alpha})_{d\in\N}$ is decreasing since $\alpha<N-1$. Now
\begin{align*}
\mu_{d+1}^{N,\alpha} & = \Big( \prod_{k=1}^{d}\frac{\frac{\alpha}{2}+k}{N-1-\frac{\alpha}{2}+k} \Big)\mu_{1}^{N,\alpha}
=  \frac{\Gamma(N-\frac{\alpha}{2})\Gamma(1+\frac{\alpha}{2}+d)}{\Gamma(1+\frac{\alpha}{2})\Gamma(N-\frac{\alpha}{2}+d)} \mu_{1}^{N,\alpha}\\
& \sim_{d\rightarrow\infty} \frac{\Gamma(N-\frac{\alpha}{2})}{\Gamma(1+\frac{\alpha}{2})}\mu_{1}^{N,\alpha}
\sqrt{\frac{\frac{\alpha}{2}+d}{N-1-\frac{\alpha}{2}+d}}\,
\frac{e^{(\frac{\alpha}{2}+d)[\log(\frac{\alpha}{2}+d)-1]}}{e^{(N-1-\frac{\alpha}{2}+d) [\log( N-1-\frac{\alpha}{2}+d) -1]}}\,,
\end{align*}
where in the second equality we used the well known property $\Gamma(x+1)=x\Gamma(x)$,
and in the last step we used the Stirling's formula.
Since the previous quantity is infinitesimal as $d\to\infty$, we conclude the proof of the lemma.
\end{proof}

As a consequence of this lemma and of the fact that $\mathcal{I}^{N,\alpha}>0$, we have that the number
$$
d_{A}^{N,\alpha}:= \min\{ d\geq2 \,:\, \mu_{d}^{N,\alpha} < \alpha\mathcal{I}^{N,\alpha} \}
$$
\noindent
is well defined. This tells us that \eqref{eq:condamm} is satisfied for every $R>0$ if $d< d_A^{N,\alpha}$,
and for
$$
R < \biggl( \frac{d(d+N-2)-(N-1)}{2\gamma\big( \alpha\mathcal{I}^{N,\alpha}-\mu_{d}^{N,\alpha} \big)} \biggr)^{\frac{1}{N+1-\alpha}}=:g^{N,\alpha}(d).
$$
if $d\geq d_{A}^{N,\alpha}$.
Moreover, by the previous lemma we get that $g^{N,\alpha}(d)\rightarrow\infty$ as $d\rightarrow\infty$.
The following lemma tells us something more about the behaviour of the function $g^{N,\alpha}$.

\begin{lemma}
There exists a natural number $d_{I}^{N,\alpha}$ such that for $d<d_{I}^{N,\alpha}$ the function $g^{N,\alpha}$ is decreasing, while for $d>d_{I}^{N,\alpha}$ is increasing.
\end{lemma}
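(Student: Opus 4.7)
Since $x\mapsto x^{1/(N+1-\alpha)}$ is strictly increasing on $(0,\infty)$, the monotonicity of $g^{N,\alpha}$ is the same as that of the discrete function
\[
h(d):=\frac{A_d}{B_d},\qquad A_d:=(d-1)(d+N-1),\qquad B_d:=\alpha\mathcal{I}^{N,\alpha}-\mu_d^{N,\alpha},
\]
where I used the factorization $d(d+N-2)-(N-1)=(d-1)(d+N-1)$, and $B_d>0$ for $d\geq d_A^{N,\alpha}$. Thus the lemma reduces to showing that the sign of $h(d+1)-h(d)$, which coincides with that of $A_{d+1}B_d-A_dB_{d+1}$, is negative for $d<d_I^{N,\alpha}$ and positive for $d>d_I^{N,\alpha}$, for some integer $d_I^{N,\alpha}$.

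Setting $a:=\alpha\mathcal{I}^{N,\alpha}$, $m_d:=\mu_d^{N,\alpha}$ and using the recurrence $m_{d+1}=\tfrac{d+\alpha/2}{d+N-1-\alpha/2}\,m_d$ from \eqref{eq:coeff} to eliminate $m_{d+1}$, a direct algebraic manipulation yields
\[
A_{d+1}B_d-A_dB_{d+1}=a(2d+N-1)-\frac{m_d\,\bigl[(N+1-\alpha)\,d(d+N-1)+\tfrac{\alpha(N-1)}{2}\bigr]}{d+N-1-\tfrac{\alpha}{2}}.
\]
From the asymptotic $m_d\sim C\,d^{-(N-1-\alpha)}$ (which follows from Stirling's formula as in the proof of the preceding lemma), the first term grows linearly in $d$ while the second is of order $m_d\cdot d\sim d^{-(N-2-\alpha)}$, which is of strictly lower order since $\alpha<N-1$. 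Hence $A_{d+1}B_d-A_dB_{d+1}>0$ for all $d$ sufficiently large, showing that $g^{N,\alpha}$ is strictly increasing past some threshold.

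The remaining task is to rule out multiple transitions. Rewriting the condition $h(d+1)>h(d)$ equivalently as
\[
\frac{m_d}{a}<F(d):=\frac{(d+N-1-\tfrac{\alpha}{2})(2d+N-1)}{(N+1-\alpha)\,d(d+N-1)+\tfrac{\alpha(N-1)}{2}},
\]
I observe that $m_d/a$ is strictly decreasing on $\{d\geq d_A^{N,\alpha}\}$, with values in $(0,1)$ tending to $0$, while $F(d)$ is a rational function with limit $2/(N+1-\alpha)>0$ at $\infty$. I expect this uniqueness of the crossing to be the main obstacle: a leading-coefficient analysis of $F'$ reveals that $F$ is not globally monotone (indeed $F'(d)<0$ for large $d$), so the naive comparison of two monotone curves does not apply. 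To conclude I would exploit the log-convexity of the sequence $(m_d)$, which follows directly from the recurrence since $\alpha/2<N-1-\alpha/2$, together with a careful sign analysis of the explicit expression displayed above, in order to show that $A_{d+1}B_d-A_dB_{d+1}$ changes sign at most once on $\{d\geq d_A^{N,\alpha}\}$; the desired $d_I^{N,\alpha}$ is then the unique such transition point.
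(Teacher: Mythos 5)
Your reduction is the same as the paper's: after using the recurrence \eqref{eq:coeff} to eliminate $\mu_{d+1}^{N,\alpha}$, you arrive at exactly the inequality \eqref{eq:condcresc} (your bracket $(N+1-\alpha)d(d+N-1)+\tfrac{\alpha(N-1)}{2}$ agrees with the paper's quadratic), and your asymptotic argument correctly shows that $g^{N,\alpha}$ is eventually increasing. However, the proof is not complete: the single-crossing property, which is the entire content of the lemma, is left as a declaration of intent (``To conclude I would exploit the log-convexity \dots together with a careful sign analysis \dots''). As it stands this is a gap, and your chosen normalization actually makes the gap harder to close: by dividing through by $\mu_d^{N,\alpha}$ and comparing the decreasing sequence $\mu_d^{N,\alpha}/a$ with $F(d)=1/R(d)$, you are comparing two quantities neither of which is monotone in the useful direction (as you yourself observe, $F$ is not monotone), and log-convexity of $(\mu_d^{N,\alpha})_d$ by itself does not obviously rescue the argument.

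The paper's resolution is to keep the inequality in the form \eqref{eq:condcresc}, i.e.\ constant $>$ $R(d)\,\mu_d^{N,\alpha}$, and to show that the \emph{product} $R(d)\,\mu_d^{N,\alpha}$ is decreasing in $d$ and tends to $0$; single crossing against the constant $\alpha\mathcal{I}^{N,\alpha}$ is then immediate, and $d_I^{N,\alpha}$ is the first index where \eqref{eq:condcresc} holds. The monotonicity of $R(d)\,\mu_d^{N,\alpha}$ is checked directly from \eqref{eq:coeff}: writing $R=P/Q$ with $P(d)=(N+1-\alpha)d(d+N-1)+\tfrac{\alpha}{2}(N-1)$ and $Q(d)=(d+N-1-\tfrac{\alpha}{2})(2d+N-1)$, one has
\begin{equation*}
\frac{R(d+1)\,\mu_{d+1}^{N,\alpha}}{R(d)\,\mu_{d}^{N,\alpha}}
=\frac{P(d+1)}{P(d)}\cdot\frac{(2d+N-1)\bigl(d+\tfrac{\alpha}{2}\bigr)}{\bigl(d+N-\tfrac{\alpha}{2}\bigr)(2d+N+1)}<1,
\end{equation*}
an elementary (if slightly tedious) polynomial comparison using $\alpha<N-1$. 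If you replace your final paragraph with this computation, your argument becomes a complete proof along the paper's lines; without it, the lemma is not established.
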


\begin{proof}
The condition $g^{N,\alpha}(d+1)>g^{N,\alpha}(d)$ is equivalent to
$$
\frac{(d+1)(d+1+N-2)-(N-1)}{2\gamma\big( \alpha\mathcal{I}^{N,\alpha}-\mu_{d+1}^{N,\alpha} \big)}>\frac{d(d+N-2)-(N-1)}{2\gamma\big( \alpha\mathcal{I}^{N,\alpha}-\mu_{d}^{N,\alpha} \big)}.
$$
Recalling \eqref{eq:coeff},
the above inequality can be rewritten, after some algebraic steps, as follows:
\begin{equation}\label{eq:condcresc}
\alpha\mathcal{I}^{N,\alpha}>\frac{d^2(N-\alpha+1)+d(N^2-\alpha N+\alpha-1)+\frac{\alpha}{2}(N-1)}{(N-1-\frac{\alpha}{2}+d)(2d+N-1)}\,\mu_{d}^{N,\alpha}.
\end{equation}
Using \eqref{eq:coeff}, it is easily seen that the right-hand side of the above inequality is decreasing
and converges to 0 as $d\to\infty$.
Hence the number
$$
d_{I}^{N,\alpha} := \min\{ d\in\N \,:\, \eqref{eq:condcresc} \mbox{ is satisfied} \}
$$
is well defined and satisfies the requirement of the lemma.
\end{proof}

We are now in position to prove Theorem~\ref{teo:minlocpalla}.

\begin{proof}[Proof of Theorem~\ref{teo:minlocpalla}]
Define
$$
\overline{R}(N,\alpha,\gamma):=\min_{d\geq d_{A}^{N,\alpha}}g^{N,\alpha}(d)\,,
$$
which can be characterized, by the previous lemmas, as
$$
\overline{R}(N,\alpha,\gamma):=\left\{
			\begin{array}{lr}
			 g^{N,\alpha}(d_{A}^{N,\alpha}) &\mbox{if } d_{A}^{N,\alpha}>d_{I}^{N,\alpha},\\
			 &\\
			 g^{N,\alpha}(d_{I}^{N,\alpha}) &\mbox{if } d_{A}^{N,\alpha}\leq d_{I}^{N,\alpha}.\\
			\end{array}
			\right.
$$
Now, from \eqref{eq:condamm}, we have that
$$
\partial^{2}\f(B_{R})[\tilde{\vphi}]>0 \text{ for every }\tilde{\vphi}\in T^\bot(\partial B_R)
\quad\Longleftrightarrow\quad R<\overline{R}(N,\alpha,\gamma),
$$
while
$$
\partial^{2}\f(B_{R})[\tilde{\vphi}]<0 \text{ for some }\tilde{\vphi}\in T^\bot(\partial B_R)
\quad\Longleftrightarrow\quad R>\overline{R}(N,\alpha,\gamma).
$$
By virtue of Theorem~\ref{teo:minl1} and Corollary~\ref{cor:minloc}, we obtain the first part of the theorem,
where $\mloc(N,\alpha,\gamma)$ is the volume of the ball of radius $\overline{R}(N,\alpha,\gamma)$.

In order to show that the critical radius tends to $\infty$ as $\alpha\to0$,
we notice that
$$
\partial^{2}\f(B_{R})[\tilde{\vphi}]\geq\sum_{d=2}^{\infty} \sum_{i=1}^{\dim(\mathcal{H}^{N}_{d})} (c_{d}^{i})^2R^{N-3}\bigl(N+1-2\gamma\alpha\mathcal{I}^{N,\alpha}R^{N+1-\alpha}\bigr).
$$
Since
$$
\mathcal{I}^{N,\alpha}\stackrel{\alpha\rightarrow0^+}{\longrightarrow}\int_{B_{1}} \frac{\langle x-y, x\rangle}{|x-y|^{2}} \,\dd y<\infty\,,
$$
we have that for each $R>0$ there exists $\overline{\alpha}(N,\gamma,R)>0$ such that for each $\alpha<\overline{\alpha}(N,\gamma,R)$
$$
\alpha\mathcal{I}^{N,\alpha}<\frac{N+1}{2\gamma R^{N+1-\alpha}},
$$
which immediately implies the claim.
To conclude the proof we examine in more details the special case $N=3$,
determining explicitly the critical mass $\mloc$.
From \eqref{eq:cinesi} we have that
\begin{eqnarray*}
\mu_{d}^{3,\alpha}
= 2^{2-\alpha}\pi\Big( \prod_{j=0}^{d-1}\Big( \frac{\alpha}{2}+j \Big) \Big)\frac{\Gamma(1-\frac{\alpha}{2})}{\Gamma(2+d-\frac{\alpha}{2})} 
= 2^{2-\alpha}\pi\alpha\frac{\Big( \prod_{j=1}^{d-1}\Big( \frac{\alpha}{2}+j \Big) \Big)}{\prod_{j=1}^{d-1}\Big(1-\frac{\alpha}{2}+j\Big)}\frac{1}{d+1-\frac{\alpha}{2}}\frac{1}{2-\alpha} \,,
\end{eqnarray*}
where we used the property $\Gamma(x+1)=x\Gamma(x)$.
Moreover, we compute explicitly in the Appendix the integral $\mathcal{I}^{3,\alpha}$, obtaining (see \eqref{eq:integral})
$$
\mathcal{I}^{3,\alpha} = 2\pi\frac{2^{2-\alpha}}{(4-\alpha)(2-\alpha)}.
$$
It is now easily seen that $d^{3,\alpha}_I=d^{3,\alpha}_A=2$ for every $\alpha\in(0,2)$.
Hence
$$
\overline{R}(3,\alpha,\gamma) = \biggl( \frac{(6-\alpha)(4-\alpha)}{2^{3-\alpha}\gamma\alpha\pi} \biggr)^{\frac{1}{4-\alpha}}\,,
$$
which completes the proof of the theorem.
\end{proof}


\section{Global minimality} \label{sect:global}

This section is devoted to the proof of the results concerning global minimality issues.
We start by showing how the information gained in Theorem~\ref{teo:minlocpalla}
can be used to prove the global minimality of the ball for small volumes.

\begin{proof}[Proof of Theorem~\ref{teo:minglobpalla}]
By scaling, we can equivalently prove that given $N\geq2$ and $\alpha\in(0,N-1)$ and setting
$$
\bar{\gamma} := \sup \bigl\{ \gamma>0 : B_1 \text{ is a global minimizer of } \f_{\alpha,\gamma} \text{ in }\R^N \text{ under volume constraint}\bigr\},
$$
we have that $\bar{\gamma}\in(0,\infty)$ and
$B_1$ is the unique global minimizer of $\f_{\alpha,\gamma}$ for every $\gamma<\bar{\gamma}$.

We start assuming by contradiction that there exist a sequence $\gamma_n\to0$
and a sequence of sets $E_n$, with $|E_n|=|B_1|$ and $\alpha(E_n,B_1)>0$, such that
\begin{equation} \label{eq:contraminglob}
\f_{\alpha,\gamma_n}(E_n)\leq\f_{\alpha,\gamma_n}(B_1).
\end{equation}
By translating $E_n$ so that $\alpha(E_n,B_1)=|E_n\triangle B_1|$, from \eqref{eq:contraminglob} one immediately gets
$$
C(N)\,|E_n\triangle B_1|^2
\leq \p(E_n)-\p(B_1)
\leq \gamma_n \bigl( \nl_\alpha(B_1)-\nl_\alpha(E_n) \bigr)
\leq \gamma_n c_0 |E_n\triangle B_1|
$$
where the first inequality follows from the quantitative isoperimetric inequality
and the last one from Proposition~\ref{prop:NLlipschitz}.
Hence, as $\gamma_n\to0$, we deduce that $\alpha(E_n,B_1)\to0$.

From the results of Section~\ref{sect:ball} it follows that
if $\gamma_0>0$ is sufficiently small then the functional $\f_{\alpha,\gamma_0}$ has positive second variation at $B_1$:
by Theorem~\ref{teo:minl1}, this implies the existence of a positive $\delta$ such that
\begin{equation}\label{eq:unifminglob}
\f_{\alpha,\gamma_0}(B_1)<\f_{\alpha,\gamma_0}(E)
\qquad\text{for every }E\text{ with }|E|=|B_1|\text{ and }0<\alpha(E,B_1)<\delta.
\end{equation}
We now want to show that \eqref{eq:unifminglob} holds for every $\gamma<\gamma_0$, with the same $\delta$.
Indeed, assuming by contradiction the existence of $\gamma<\gamma_0$
and $E\subset\R^N$ such that $|E|=|B_1|$, $0<\alpha(E,B_1)<\delta$ and
\begin{equation} \label{eq:unifminglob2}
\f_{\alpha,\gamma}(E)\leq\f_{\alpha,\gamma}(B_1),
\end{equation}
since $\p(B_1)<\p(E)$ we necessarily have $\nl_\alpha(E)<\nl_\alpha(B_1)$. Hence by \eqref{eq:unifminglob2}
\begin{equation}\label{eq:contr}
\p(E)-\p(B_1) \leq \gamma \bigl( \nl_\alpha(B_1)-\nl_\alpha(E) \bigr) < \gamma_0 \bigl( \nl_\alpha(B_1)-\nl_\alpha(E) \bigr)\,,
\end{equation}
that is, $\f_{\alpha,\gamma_0}(E)<\f_{\alpha,\gamma_0}(B_1)$, which contradicts \eqref{eq:unifminglob}.

Now, since for $n$ large enough we have that $\gamma_n<\gamma_0$ and $0<\alpha(E_n,B_1)<\delta$,
the previous property is in contradiction with \eqref{eq:contraminglob}.
This shows in particular that $\bar{\gamma}>0$.

The fact that $\bar{\gamma}$ is finite follows from Theorem~\ref{teo:minlocpalla},
which shows that for large masses the ball is not a local minimizer
(and obviously not even a global minimizer).

Finally, assume by contradiction that for some $\gamma<\bar{\gamma}$ the ball is not the unique global minimizer,
that is there exists a set $E$, with $|E|=|B_1|$ and $\alpha(E,B_1)>0$, such that $\f_{\alpha,\gamma}(E)\leq\f_{\alpha,\gamma}(B_1)$.
By definition of $\bar{\gamma}$, we can find $\gamma'\in(\gamma,\bar{\gamma})$ such that $B_1$ is a global minimizer of $\f_{\alpha,\gamma'}$.
Arguing as before, we have that by the isoperimetric inequality $\p(B_1)<\p(E)$,
which by our contradiction assumption implies that $\nl_\alpha(E)<\nl_\alpha(B_1)$;
this yields
$$
\p(E)-\p(B_1) \leq \gamma \bigl( \nl_\alpha(B_1)-\nl_\alpha(E) \bigr) < \gamma' \bigl( \nl_\alpha(B_1)-\nl_\alpha(E) \bigr)\,,
$$
which contradicts the fact that $B_1$ is a global minimizer for $\f_{\alpha,\gamma'}$.
\end{proof}

We now want to analyze what happens for small exponents $\alpha$.
Since for $\alpha=0$ the functional is just the perimeter, which is uniquely minimized by the ball,
the intuition suggests that the unique minimizer of $\f_{\alpha,\gamma}$, for $\alpha$ close to 0,
is the ball itself, as long as a minimizer exists.
In order to prove the theorem, we need an auxiliary result:
the non-existence volume threshold is uniformly bounded for $\alpha\in(0,1)$.
The proof is a simple adaptation of the argument contained in \cite[Section~2]{LuOtt},
where just the three-dimensional case with $\alpha=1$ is considered.

\begin{proposition}\label{prop:nonexist}
There exists $\bar{m}=\bar{m}(N,\gamma)<+\infty$ such that
for every $m>\bar{m}$ the minimum problem
\begin{equation} \label{minapiccolo}
I_m^\alpha:=\inf\left\{\f_{\alpha,\gamma}(E) \,:\, E\subset\R^N, \, |E|=m\right\}
\end{equation}
does not have a solution for every $\alpha\in(0,1)$.
\end{proposition}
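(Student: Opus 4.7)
The strategy follows \cite[Section~2]{LuOtt}, which treats the case $N=3$, $\alpha=1$. Suppose by contradiction that for some $\alpha\in(0,1)$ the problem \eqref{minapiccolo} admits a minimizer $E$ with $|E|=m$ arbitrarily large; by Theorem~\ref{teo:regmin}, such an $E$ is essentially bounded and connected. The plan is to split $E$ along a hyperplane into two pieces of comparable mass, separated by a slice of small area, and then to move one piece far away: for the resulting competitor, the perimeter grows by a bounded amount while the nonlocal term decreases by a quantity that grows with $m$, contradicting minimality for $m$ large.

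The first step is to establish a linear-in-$m$ upper bound on $\f_{\alpha,\gamma}(E)$ uniform in $\alpha\in(0,1)$. Testing with $k$ disjoint balls of mass $m/k$ sent to infinite mutual distance,
\[
\f_{\alpha,\gamma}(E)\leq k\,\f_{\alpha,\gamma}(B_{m/k})=A\,k^{1/N}m^{\frac{N-1}{N}}+\gamma B(\alpha)\,k^{-\frac{N-\alpha}{N}}m^{\frac{2N-\alpha}{N}},
\]
where $A$ and $B(\alpha)$ are explicit constants and $B$ is continuous on $[0,1]$. Optimization at $k\sim m$ yields $\f_{\alpha,\gamma}(E)\leq C(N,\gamma)\,m$ for $m\geq 1$, uniformly in $\alpha\in(0,1)$; in particular $\p(E)\leq Cm$. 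Combined with the connectedness of $E$, a classical slicing argument shows that $\mathrm{diam}(E)\leq C'm$ with $C'=C'(N,\gamma)$ uniform in $\alpha$.

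The second step is the splitting. Orienting the axis $e_1$ along a direction of maximal extent of $E$ and using the identity
\[
\p(E\cap\{x_1<t\})+\p(E\cap\{x_1>t\})=\p(E)+2\hn(E\cap H_t),\qquad\int_{\R}\hn(E\cap H_t)\,\dd t=m,
\]
an averaging argument over the interval of $t$ for which $|E\cap\{x_1<t\}|\in[m/3,2m/3]$ produces a value $t_0$ such that both pieces $E^{\pm}:=E\cap\{\pm(x_1-t_0)>0\}$ have mass at least $cm$ and $\hn(E\cap H_{t_0})\leq\bar C$, with $c,\bar C$ depending only on $N$ and $\gamma$. For $s\to+\infty$ the set $F_s:=E^-\cup(s\,e_1+E^+)$ satisfies $|F_s|=m$, $\p(F_s)\leq\p(E)+2\bar C$, and $\nl_\alpha(F_s)-\nl_\alpha(E)\to-2\int_{E^-}\int_{E^+}|x-y|^{-\alpha}\,\dd x\,\dd y$. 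Passing to the limit in $\f_{\alpha,\gamma}(E)\leq\f_{\alpha,\gamma}(F_s)$ gives $\gamma\int_{E^-}\int_{E^+}|x-y|^{-\alpha}\,\dd x\,\dd y\leq\bar C$. Since $E^{\pm}$ both have mass $\geq cm$ and lie in a common ball of radius $\leq C'm$,
\[
\int_{E^-}\int_{E^+}\frac{\dd x\,\dd y}{|x-y|^\alpha}\geq\frac{(cm)^2}{(2C'm)^\alpha}\geq c''\,m^{2-\alpha}
\]
uniformly in $\alpha\in(0,1)$. Because $2-\alpha>1$ on this range, we obtain $m^{2-\alpha}\leq\bar C/(c''\gamma)$, forcing $m\leq\bar m(N,\gamma)$ and contradicting the hypothesis.

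The main obstacle is the second step: producing a hyperplane that simultaneously splits $E$ into two pieces of comparable mass while keeping the slice area bounded by a constant depending only on $N$ and $\gamma$ (not on $\alpha$ or $m$). This requires combining the diameter-to-perimeter estimate for connected sets of finite perimeter with a careful analysis of the slicing identity, and is the main point at which the dimensional and $\alpha$-dependent constants of \cite{LuOtt} must be tracked. Once the splitting is secured, the remainder of the argument proceeds essentially verbatim for any $N\geq 2$ and $\alpha\in(0,1)$.
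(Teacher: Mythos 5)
Your Step 1 (the linear upper bound $I_m^\alpha\leq Cm$, uniform in $\alpha$) is correct and coincides with the paper's first step. The decisive gap is your Step 2. The averaging argument over the interval $I:=\{t:\,|E\cap\{x_1<t\}|\in[m/3,2m/3]\}$ only produces a slice of area at most $\tfrac{m}{3|I|}$, and you have no lower bound on $|I|$: nothing you have established prevents the middle third of the mass from lying in an arbitrarily thin slab. Worse, the claim of a slice bound $\bar C=\bar C(N,\gamma)$ \emph{independent of $m$} cannot be obtained this way even in principle: for a ball of volume $m$ every mass-balancing hyperplane meets it in a disc of area of order $m^{(N-1)/N}\to\infty$. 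The uniformity in $\alpha\in(0,1)$ is exactly what makes a weaker bound insufficient: with the trivial estimate $\hn(E\cap\{x_1=t\})\leq\p(E)\leq Cm$ your final inequality reads $m^{2-\alpha}\leq Cm$, i.e. $m\leq C^{1/(1-\alpha)}$, which blows up as $\alpha\to1^-$ and does not yield a single $\bar m(N,\gamma)$. A secondary but real issue: for $N\geq3$ the bound ${\rm diam}(E)\leq C'm$ does \emph{not} follow from connectedness plus $\p(E)\leq Cm$ (thin tubes have small perimeter and large diameter); it requires a uniform volume density estimate at unit scale, which must itself be extracted from minimality and is absent from your argument. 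So Step 2 is a missing idea, not a technicality, as you yourself suspect.

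For comparison, the paper's proof avoids any splitting. After the linear upper bound, it compares $E$ with $\lambda(E\setminus B_R(x))$ to get the density estimate $|E\cap B_R(x)|\geq C_1R^N$ for $R\leq1$ (with constants depending only on $N,\gamma$), upgrades it via connectedness to $|E\cap B_R(x)|\geq CR$ for $1<R<\tfrac12{\rm diam}(E)$ (which also gives your diameter bound), and then, setting $E_R:=\{(x,y)\in E\times E:|x-y|<R\}$, writes $\nl_\alpha(E)=\tfrac12\int_0^\infty R^{-\alpha}\,\tfrac{\dd}{\dd R}\LL^{2N}(E_R)\,\dd R$. Since $\LL^{2N}(E_R)\geq CmR$ and $R^{-\alpha}\geq R^{-1}$ for $R\geq1$ (this is precisely where $\alpha<1$ enters), an integration by parts gives $\nl_\alpha(E)\geq C_2m\log m-C_2m$, which contradicts $\f_{\alpha,\gamma}(E)\leq C_0m$ for $m$ large, uniformly in $\alpha$. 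If you wish to salvage a splitting-type argument, you would in any case first need those density estimates; as written, the proposal does not prove the proposition.
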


\begin{proof}
During the proof we will denote by $C$ a generic constant, depending only on $N$ and $\gamma$,
which may change from line to line.

\smallskip
\noindent
{\it Step 1.}
We claim that there exists a constant $C_0$, depending only on $N$ and $\gamma$, such that
\begin{equation} \label{claim1apiccolo}
I_m^\alpha \leq C_0\,m
\qquad
\text{for every }0<\alpha<N-1\text{ and }m\geq1.
\end{equation}
Indeed, if $B$ is a ball of volume $m$, then
$$
\f_{\alpha,\gamma}(B) = N{\omega_N}^{1/N}m^{{(N-1)}/{N}}
+ \gamma \, c_\alpha \left(\frac{m}{\omega_N}\right)^{\frac{2N-\alpha}{N}},
\qquad
c_\alpha := \int_{B_1}\int_{B_1}\frac{1}{|x-y|^\alpha}\,\dd x\dd y\,.
$$
It follows that for every $1\leq m<2$ we have $I_m^\alpha\leq C_0$, for some constant $C_0$ depending only on $N$ and $\gamma$.
It is now easily seen that $I_m^\alpha\leq I_{m_1}^\alpha + I_{m_2}^\alpha$ if $m=m_1+m_2$
(see the proof of \cite[Lemma~3]{LuOtt}): hence by induction on $k$
we obtain $I_m^\alpha\leq C_0k$ for every $m\in[k,k+1)$.

\smallskip
\noindent
{\it Step 2.}
We claim that there exists a constant $C_1$, depending only on $N$ and $\gamma$, such that
for every $0<\alpha<N-1$ and $m\geq1$, if $E$ is a solution to \eqref{minapiccolo} then
\begin{equation} \label{claim2apiccolo}
|E\cap B_R(x)|\geq C_1 R^N
\end{equation}
for every $R\leq1$ and for every $x\in E$ such that $|E\cap B_r(x)|>0$ for all $r>0$.

To prove the claim, assume without loss of generality that $x=0$.
It is clearly sufficient to show \eqref{claim2apiccolo} for $\mathcal{L}^1$-a.e. $R<\e_0$, where $\e_0$ will be fixed later in the proof.
In particular, from now on we can assume without loss of generality that $R$ is such that $\mathcal{H}^{N-1}(\partial E\cap\partial B_R)=0$.
We compare the energies of $E$ and $E':=\lambda(E\setminus B_R)$, where $\lambda>1$ is such that $|E'|=m$:
by minimality of $E$ we have $\f_{\alpha,\gamma}(E)\leq\f_{\alpha,\gamma}(E')$,
which gives after a direct computation
$$
\hn(\partial E\cap B_R) \leq (\lambda^{2N-\alpha}-1)\f_{\alpha,\gamma}(E) + \lambda^{N-1}\hn(\partial B_R\cap E).
$$
In turn this implies, by using $\hn(\partial(E\cap B_R))=\hn(\partial E\cap B_R)+\hn(\partial B_R\cap E)$
(recall that $\mathcal{H}^{N-1}(\partial E\cap\partial B_R)=0$),
$$
\hn(\partial(E\cap B_R)) \leq (\lambda^{2N-\alpha}-1)\f_{\alpha,\gamma}(E) + (\lambda^{N-1}+1)\hn(\partial B_R\cap E).
$$
Now, choosing $\e_0>0$ so small that $|E\setmeno B_R|\geq \frac12 m$, we obtain the following estimates:
$$
\lambda^{2N-\alpha}-1 = \Biggl( \frac{m}{|E\setmeno B_R|} \Biggr)^{\frac{2N-\alpha}{N}} - 1
\leq C\Biggl( \frac{m}{|E\setmeno B_R|} -1 \Biggr)
\leq C \frac{|E\cap B_R|}{m}\,,
\quad
\lambda^{N-1}\leq C\,.
$$
Hence from the isoperimetric inequality, \eqref{claim1apiccolo}, and from the above estimates we deduce that
\begin{align*}
|E\cap B_R|^{\frac{N-1}{N}} \leq C |E\cap B_R| + C \hn(\partial B_R\cap E).
\end{align*}
Finally, observe that if $\e_0$ is sufficiently small we also have $|E\cap B_R|\leq \frac{1}{2C}|E\cap B_R|^{\frac{N-1}{N}}$,
hence we obtain
$$
|E\cap B_R|^{\frac{N-1}{N}} \leq C\hn(\partial B_R\cap E) = C \frac{\dd}{\dd R}|E\cap B_R|,
$$
which yields
$$
\frac{\dd}{\dd R}|E\cap B_R|^{\frac1N}\geq C
\qquad\text{for }\mathcal{L}^1\text{-a.e. }R<\e_0.
$$
By integrating the previous inequality we conclude the proof of the claim.

\smallskip
\noindent
{\it Step 3.}
We claim that there exists a constant $C_2$, depending only on $N$ and $\gamma$, such that
for every $0<\alpha<1$ and $m\geq1$, if $E$ is a solution to \eqref{minapiccolo} then
\begin{equation} \label{claim3apiccolo}
\nl_\alpha(E)\geq C_2\,m\log m - C_2\,m
\end{equation}
(notice that the conclusion of the proposition follows immediately from \eqref{claim1apiccolo} and \eqref{claim3apiccolo}).

In order to prove the claim, we first observe that
\begin{equation}\label{claim4apiccolo}
|E\cap B_R(x)|\geq CR
\qquad\text{for every }x\in E\text{ and }1<R<\textstyle{\frac12}{\rm diam}(E).
\end{equation}
Indeed, as $E$ is not contained in $B_R(x)$ and $E$ is connected (see Theorem~\ref{teo:regmin}),
we can find points $x_i\in E\cap\partial B_{R-i}(x)$ for $i=1,\ldots,\lfloor R \rfloor$
such that $|E\cap B_r(x_i)|>0$ for every $r>0$. Then by \eqref{claim2apiccolo}
$$
|E\cap B_R(x)| \geq \sum_{i=1}^{\lfloor R \rfloor} |E\cap B_{\frac12}(x_i)| \geq C_1 \left(\frac12\right)^N\lfloor R \rfloor\,.
$$

Observe now that, if we set $E_R:=\{(x,y)\in E\times E : |x-y|<R\}$,
we have by \eqref{claim4apiccolo} that for every $1<R<\frac12{\rm diam}(E)$
\begin{align}\label{claim5apiccolo}
\LL^{2N}(E_R)
= \int_{E} |E\cap B_R(x)|\,\dd x
\geq C |E| R.
\end{align}
Hence
\begin{align*}
\nl_\alpha(E)
&   = \int_E\int_E\frac{1}{|x-y|^\alpha}\,\dd x\dd y
    = \frac{1}{\sqrt{2}}\int_0^{+\infty} \frac{1}{R^\alpha} \, \mathcal{H}^{2N-1}(\partial E_R)\,\dd R \\
&   = \frac12 \int_0^{+\infty} \frac{1}{R^\alpha} \, \frac{\dd}{\dd R}\LL^{2N}(E_R)\,\dd R
    \geq \frac12 \int_1^{+\infty} \frac{1}{R} \, \frac{\dd}{\dd R}\LL^{2N}(E_R)\,\dd R \\
&   = - \frac12\,\LL^{2N}(E_1) + \frac12\int_1^{+\infty} \frac{1}{R^2} \, \LL^{2N}(E_R)\,\dd R \\
&   \geq -Cm + Cm\int_{1}^{\frac12{\rm diam}(E)} \frac{1}{R}\,\dd R\,,
\end{align*}
where in the first inequality we used the fact that $\alpha<1$,
while the second one follows from \eqref{claim5apiccolo}.
This completes the proof of the proposition.
\end{proof}

An essential remark for the proof of Theorem~\ref{teo:apiccolo} is contained in the following lemma,
where it is shown that the local minimality neighbourhood of the ball is in fact \emph{uniform}
with respect to $\gamma$ and $\alpha$.

\begin{lemma} \label{lemma:uniformity}
Given $\bar{\gamma}>0$, there exist $\bar{\alpha}>0$ and $\delta>0$ such that
$$
\f_{\alpha,\gamma}(B_1) < \f_{\alpha,\gamma}(E)
$$
for every $\alpha\leq\bar{\alpha}$, for every $\gamma\leq\bar{\gamma}$
and for every set $E$ with $|E|=|B_1|$ and $0<\alpha(E,B_1)<\delta$.
\end{lemma}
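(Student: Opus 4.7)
The plan is to first reduce to the case $\gamma=\bar\gamma$ fixed, via the same monotonicity trick employed in the proof of Theorem~\ref{teo:minglobpalla}, and then establish the $L^1$-local minimality of $B_1$ for $\f_{\alpha,\bar\gamma}$ with parameters that do not deteriorate as $\alpha\to 0^+$. For the reduction: if we can find $\bar\alpha,\delta>0$ such that $\f_{\alpha,\bar\gamma}(B_1)<\f_{\alpha,\bar\gamma}(E)$ for every $\alpha\leq\bar\alpha$, $|E|=|B_1|$ and $0<\alpha(E,B_1)<\delta$, then the same $\bar\alpha,\delta$ serve every $\gamma\leq\bar\gamma$. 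Indeed, if at some $\gamma<\bar\gamma$ a set $E$ satisfied $\f_{\alpha,\gamma}(E)\leq\f_{\alpha,\gamma}(B_1)$, the strict isoperimetric inequality and the Riesz rearrangement inequality would give $\p(E)>\p(B_1)$ and $\nl_\alpha(E)<\nl_\alpha(B_1)$ (strictly, since $\alpha(E,B_1)>0$), whence
\[
\p(E)-\p(B_1)\leq\gamma\bigl(\nl_\alpha(B_1)-\nl_\alpha(E)\bigr)<\bar\gamma\bigl(\nl_\alpha(B_1)-\nl_\alpha(E)\bigr),
\]
contradicting the assumption at $\gamma=\bar\gamma$.

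The core of the argument is therefore the uniform analysis at $\gamma=\bar\gamma$. Setting $R=1$ in the explicit formula derived in Section~\ref{sect:ball}, for $\tilde\vphi=\sum c_d^i H_d^i\in T^\bot(\partial B_1)$ we have
\[
\partial^{2}\f_{\alpha,\bar\gamma}(B_{1})[\tilde\vphi]=\sum_{d=2}^{\infty}\sum_{i=1}^{\dim(\mathcal{H}^{N}_{d})}(c_{d}^{i})^{2}\Bigl[d(d+N-2)-(N-1)+2\bar\gamma\bigl(\mu_{d}^{N,\alpha}-\alpha\mathcal{I}^{N,\alpha}\bigr)\Bigr].
\]
The factor $\alpha/2$ appearing (at $i=0$) in the product defining $\mu_{d}^{N,\alpha}$ in \eqref{eq:cinesi} forces $\mu_{d}^{N,\alpha}\to 0$ as $\alpha\to 0^+$ for each $d\geq 1$; since $(\mu_{d}^{N,\alpha})_{d}$ is decreasing in $d$, this convergence is uniform in $d\geq 2$. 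By dominated convergence $\mathcal{I}^{N,\alpha}$ has a finite limit as $\alpha\to 0^+$, so $\alpha\mathcal{I}^{N,\alpha}\to 0$ as well. Consequently, for $\bar\alpha$ sufficiently small the bracketed coefficient is bounded below by $\tfrac12 d(d+N-2)$ for every $d\geq 2$ and every $\alpha\in(0,\bar\alpha]$, yielding
\[
\partial^{2}\f_{\alpha,\bar\gamma}(B_{1})[\tilde\vphi]\geq \bar m_{0}\,\|\tilde\vphi\|^{2}_{\widetilde{H}^{1}(\partial B_{1})}\qquad\forall\,\tilde\vphi\in T^\bot(\partial B_{1}),\ \alpha\in(0,\bar\alpha],
\]
with $\bar m_{0}>0$ independent of $\alpha$. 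In particular $\f_{\alpha,\bar\gamma}$ has positive second variation at $B_{1}$ with coercivity constant $m_{0}\geq\bar m_{0}$ uniformly in $\alpha\leq\bar\alpha$.

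With this uniform coercivity in hand, Theorem~\ref{teo:minl1} supplies, for each $\alpha\leq\bar\alpha$, constants $\delta_{\alpha},C_{\alpha}>0$ such that $\f_{\alpha,\bar\gamma}(B_{1})+C_{\alpha}\alpha(B_{1},F)^{2}\leq\f_{\alpha,\bar\gamma}(F)$ whenever $|F|=|B_{1}|$ and $\alpha(F,B_{1})<\delta_{\alpha}$. The claim is that both can be chosen independent of $\alpha$. By inspection, the constants produced in the proofs of Theorems~\ref{teo:minw} and \ref{teo:minl1} depend only on: (i) geometric quantities attached to the fixed set $B_{1}$, such as $C_{E}$ in Lemma~\ref{lemma:stimaper}; (ii) the coercivity constant $m_{0}$, uniformly bounded below by $\bar m_{0}$; and (iii) the constants in Propositions~\ref{prop:ve}, \ref{prop:NLlipschitz} and Lemma~\ref{lemma:potenziale}, which are uniform for $\alpha\leq\bar\alpha$ by virtue of Remarks~\ref{rm:ve} and \ref{rm:potenziale}. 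Tracking these dependencies provides uniform $\delta,C>0$; in particular $\f_{\alpha,\bar\gamma}(B_{1})<\f_{\alpha,\bar\gamma}(E)$ for every $\alpha\leq\bar\alpha$ and every $E$ with $|E|=|B_{1}|$ and $0<\alpha(E,B_{1})<\delta$, which combined with the first step completes the proof.

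The main technical obstacle is the bookkeeping in the last paragraph: explicitly verifying that no constant appearing in the proofs of Sections~\ref{sect:2var} and \ref{sect:l1min} degenerates as $\alpha\to 0^+$. The most delicate points are the contradiction argument of Step~1 in the proof of Theorem~\ref{teo:minw}, where one must ensure that the perturbed second variations $\partial^{2}\f_{\alpha,\bar\gamma}(F_{n})$ inherit a uniform coercivity from the estimate established in Step~2 above, and the choice of the penalization constants $\Lambda_{1},\Lambda_{2}$ in the proof of Theorem~\ref{teo:minl1}, which are controlled respectively by Lemma~\ref{lemma:stimaper} and by the $\alpha$-uniform Proposition~\ref{prop:NLlipschitz}.
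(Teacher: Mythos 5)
Your proof is correct and follows the same overall architecture as the paper's: reduce to $\gamma=\bar\gamma$ by the monotonicity trick from Theorem~\ref{teo:minglobpalla}, establish an $\alpha$-uniform coercivity bound for $\partial^2\f_{\alpha,\bar\gamma}(B_1)$ on $T^\bot(\partial B_1)$, and then observe that the constants in Theorems~\ref{teo:minw}, \ref{teo:mininf} and \ref{teo:minl1} can be tracked uniformly in $\alpha\leq\bar\alpha$ (the paper is no more detailed than you are on this last point; it likewise identifies the uniformity of $c_0$ in Proposition~\ref{prop:NLlipschitz} and of the bounds in Proposition~\ref{prop:ve} as the essential facts, and inserts sequences $\alpha_n\to0$ into the contradiction arguments). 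The one genuine divergence is in the middle step: the paper proves the uniform lower bound \eqref{unifminglob3} by a compactness argument (extract $\vphi_n\wto\vphi_0$ weakly in $H^1$, show the two nonlocal terms vanish as $\alpha_n\to0$ using Lemma~\ref{lemma:potenziale} with its $\alpha$-uniform constant, and reach a contradiction either with $\|\vphi_n\|_{\widetilde H^1}=1$ or with the strict stability of the ball for the area functional), whereas you read the bound off the explicit spherical-harmonics diagonalization of Section~\ref{sect:ball}, using that $\mu_d^{N,\alpha}>0$ is decreasing in $d$ and that $\alpha\mathcal{I}^{N,\alpha}\to0$. Your route is more quantitative and arguably cleaner here (and, unlike the crude bound used at the end of the proof of Theorem~\ref{teo:minlocpalla}, it correctly retains the $d(d+N-2)$ factor needed for $\widetilde H^1$-coercivity rather than mere $L^2$-positivity); the paper's compactness argument is softer but would transfer to critical sets other than the ball. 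Two cosmetic remarks: the citation of Remark~\ref{rm:ve} in your bookkeeping list is off target (the relevant uniformity statements are in Remark~\ref{rm:potenziale} and in the proof of Proposition~\ref{prop:NLlipschitz}), and in the reduction step the strict inequality $\nl_\alpha(E)<\nl_\alpha(B_1)$ follows already from $\f_{\alpha,\gamma}(E)\leq\f_{\alpha,\gamma}(B_1)$ together with $\p(E)>\p(B_1)$, so the appeal to the strict Riesz rearrangement inequality is not needed.
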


\begin{proof}[Proof (sketch)]
Notice that, by the same argument used in the proof of Theorem~\ref{teo:minglobpalla},
it is sufficient to show that, given $\bar{\gamma}>0$, there exist $\bar{\alpha}>0$ and $\delta>0$ such that
$$
\f_{\alpha,\bar{\gamma}}(B_1) < \f_{\alpha,\bar{\gamma}}(E)
$$
for every $\alpha\leq\bar{\alpha}$ and for every set $E$ with $|E|=|B_1|$ and $0<\alpha(E,B_1)<\delta$.

\smallskip
In order to prove this property, we start by observing that there exists $\alpha_1>0$ such that
\begin{equation} \label{unifminglob3}
m_0 := \inf_{\alpha\leq\alpha_1} \inf\left\{
\partial^2\f_{\alpha,\bar{\gamma}}(B_1)[\vphi] \, : \, \vphi\in T^\perp(\partial B_1), \, \|\vphi\|_{\widetilde{H}^1(\partial B_1)}=1 \right\} >0\,.
\end{equation}
In fact, assuming by contradiction the existence of $\alpha_n\to0$ and $\vphi_n\in T^\bot(\partial B_1)$,
with $\|\vphi_n\|_{\widetilde{H}^1}=1$, such that $\partial^2\f_{\alpha_n,\bar{\gamma}}(B_1)[\vphi_n]\to0$,
we have by compactness that, up to subsequences, $\vphi_n\wto\vphi_0$ weakly in $H^1$ for some $\vphi_0\in T^\bot(\partial B_1)$.
It is now not hard to show that the last two integrals in the quadratic form $\partial^2\f_{\alpha_n,\bar{\gamma}}(B_1)[\vphi_n]$ converge to 0 as $n\to\infty$:
indeed, the second integral in \eqref{eq:fquad} converges to $0$, since it is equal to
$$
-\alpha_n\int_{\partial B_1} \biggl( \int_{B_1}\frac{x-y}{|x-y|^{\alpha_n+2}}\,\dd y \biggr) \cdot x \, \vphi_n^2(x)\,\dd \hn(x)
\leq C\alpha_n\int_{\partial B_1} \vphi_n^2\,\dd\hn
\rightarrow 0\qquad\mbox{as }n\to\infty\,.
$$
For the last integral in \eqref{eq:fquad}, denoting by
$G_{\alpha_n}(x,y):=|x-y|^{-\alpha_n}$, we write
\begin{align*}
\int_{\partial B_1}&\int_{\partial B_1}G_{\alpha_n}(x,y) \vphi_n(x) \vphi_n(y)\,\dd\hn(x)\dd\hn(y) \\
&=	\int_{\partial B_1}\int_{\partial B_1}G_{\alpha_n}(x,y) \vphi_n(x) (\vphi_n(y)-\vphi_0(y))\,\dd\hn(x)\dd\hn(y) \\
&+ \int_{\partial B_1}\int_{\partial B_1}G_{\alpha_n}(x,y) (\vphi_n(x)-\vphi_0(x)) \vphi_0(y)\,\dd\hn(x)\dd\hn(y) \\
&+ \int_{\partial B_1}\int_{\partial B_1}G_{\alpha_n}(x,y) \vphi_0(x) \vphi_0(y)\,\dd\hn(x)\dd\hn(y)\,;
\end{align*}
the potential estimates provided by Lemma~\ref{lemma:potenziale}, where the constant can be chosen independently of $\alpha_n$ by Remark~\ref{rm:potenziale}, guarantee that the first two integrals in the above expression converge to zero,
while also the third one vanishes in the limit by the Lebesgue's Dominate Convergence Theorem,
recalling that $\int_{\partial B_1}\vphi_0=0$ and $\alpha_n\to0$.
Moreover, for the first integral in the quadratic form $\partial^2\f_{\alpha_n,\bar{\gamma}}(B_1)[\vphi_n]$, we have that
$$
\int_{\partial B_1}|D_\tau\vphi_0|^2 \leq \liminf_{n\to\infty} \int_{\partial B_1}|D_\tau\vphi_n|^2 \,,
\qquad
\int_{\partial B_1}|B_{\partial B_1}|^2\vphi_n^2 \to \int_{\partial B_1}|B_{\partial B_1}|^2\vphi_0^2\,.
$$
Hence, if $\vphi_0=0$ we conclude that $\int_{\partial B_1}|D_\tau\vphi_n|^2\to0$, which contradicts the fact that $\|\vphi_n\|_{\widetilde{H}^1}=1$ for every $n$.
On the other hand, if $\vphi_0\neq0$, we obtain
$$
\int_{\partial B_1}|D_\tau\vphi_0|^2\,\dd\hn - \int_{\partial B_1}|B_{\partial B_1}|^2\vphi_0^2\,\dd\hn\leq0\,,
$$
that is, the second variation of the area functional computed at the ball $B_1$ is not strictly positive, which is again a contradiction.

\smallskip
With condition \eqref{unifminglob3}, it is straightforward to check that the proof of Theorem~\ref{teo:minw} provides the existence
of $\delta_1>0$ and $C_1>0$ such that
$$
\f_{\alpha,\bar{\gamma}}(E) \geq \f_{\alpha,\bar{\gamma}}(B_1) + C_1\bigl( \alpha(E,B_1) \bigr)^2
$$
for every $\alpha\leq\alpha_1$ and for every $E\subset\R^N$ with $|E|=|B_1|$ and $\partial E=\{x+\psi(x)x:x\in\partial B_1\}$
for some $\psi$ with $\|\psi\|_{W^{2,p}(\partial B_1)}<\delta_1$.

\smallskip
In turn, having proved this property one can repeat the proofs of Theorem~\ref{teo:mininf} and Theorem~\ref{teo:minl1}
to deduce that there exist $\alpha_2>0$, $\delta_2>0$ and $C_2>0$ such that
$$
\f_{\alpha,\bar{\gamma}}(E) \geq \f_{\alpha,\bar{\gamma}}(B_1) + C_2\bigl( \alpha(E,B_1) \bigr)^2
$$
for every $\alpha\leq\alpha_2$ and for every $E\subset\R^N$ with $|E|=|B_1|$ and $\alpha(E,B_1)<\delta_2$.
The only small modifications consist in assuming, in the contradiction arguments, also the existence of sequences $\alpha_n\to0$,
instead of working with a fixed $\alpha$.
Then the essential remark is that the constant $c_0$ provided by Proposition~\ref{prop:NLlipschitz} is independent of $\alpha_n$.
In addition, some small changes are required in the last part of the proof,
since the functions $v_{F_n}$ associated, according to \eqref{eq:ve}, with the sets $F_n$ constructed in the proof
are defined with respect to different exponents $\alpha_n$,
but observe that the bounds provided by Proposition~\ref{prop:ve} are still uniform.
The easy details are left to the reader.

These observations complete the proof of the lemma.
\end{proof}

We are now in position to complete the proof of Theorem~\ref{teo:apiccolo}.

\begin{proof}[Proof of Theorem~\ref{teo:apiccolo}]
We assume by contradiction that there exist $\alpha_n\to0$, $m_n>0$ and sets $E_n\subset\R^N$,
with $|E_n|=m_n$, $\alpha(E_n,B_n)>0$ (where we denote by $B_n$ a ball with volume $m_n$),
such that $E_n$ is a global minimizer of $\f_{\alpha_n,\gamma}$ under volume constraint.
Note that, as the non-existence threshold is uniformly bounded for $\alpha\in(0,1)$ (Proposition~\ref{prop:nonexist}),
we can assume without loss of generality that $m_n\leq\bar{m}<+\infty$.

By scaling, we can rephrase our contradiction assumption as follows: there exist
$\alpha_n\to0$, $\gamma_n>0$ with $\bar{\gamma}:=\sup_n\gamma_n<+\infty$,
and $F_n\subset\R^N$ with $|F_n|=|B_1|$, $\alpha(F_n,B_1)>0$ such that
$$
\f_{\alpha_n,\gamma_n}(F_n) = \min\{\f_{\alpha_n,\gamma_n}(F) \,:\, |F|=|B_1|\}\,,
$$
and in particular
\begin{equation}\label{contraapiccolo}
\f_{\alpha_n,\gamma_n}(F_n)\leq\f_{\alpha_n,\gamma_n}(B_1).
\end{equation}
We now claim that, since $\alpha_n\to0$,
\begin{equation}\label{claim6apiccolo}
\lim_{n\to+\infty}|\nl_{\alpha_n}(B_1)-\nl_{\alpha_n}(F_n)|=0.
\end{equation}
Indeed, we observe that by adapting the first step of the proof of Theorem~\ref{teo:regmin},
we have that there exists $\Lambda>0$ (independent of $n$) such that $F_n$ is also a solution to the penalized minimum problem
$$
\min\left\{ \f_{\alpha_n,\gamma_n}(F) + \Lambda\big| |F|-|B_1| \big| \,:\, F\subset\R^N \right\}
$$
(for $n$ large enough).
In turn, this implies that each set $F_n$ is an $(\omega,r_0)$-minimizer for the area functional (see Definition~\ref{def:quamin})
for some positive $\omega$ and $r_0$ (independent of $n$): in fact for every finite perimeter set $F$ with $F\triangle F_n\subset\subset B_{r_0}(x)$ we have by minimality of $F_n$
\begin{align*}
\p(F_n)
&\leq \p(F) + \gamma_n \bigl( \nl_{\alpha_n}(F)-\nl_{\alpha_n}(F_n)\bigr) + \Lambda \big||F|-|B_1|\big|\\
&\leq \p(F) + \bigl( \bar{\gamma} c_0 + \Lambda \bigr) |F\triangle F_n|,
\end{align*}
where we used Proposition~\ref{prop:NLlipschitz} and the fact that the constant $c_0$ can be chosen independently of $\alpha_n$.
We can now use the uniform density estimates for $(\omega,r_0)$-minimizers (see \cite[Theorem~21.11]{Mag}),
combined with the connectedness of the sets $F_n$ (see Theorem~\ref{teo:regmin}),
to deduce that (up to translations) they are equibounded: there exists $\bar{R}>0$ such that $F_n\subset B_{\bar{R}}$ for every $n$.
Using this information, it is now easily seen that, since $\alpha_n\to0$,
$$
\nl_{\alpha_n}(F_n)=\int_{F_n}\int_{F_n}\frac{1}{|x-y|^{\alpha_n}}\,\dd x\dd y \to |B_1|^2\,,
$$
from which \eqref{claim6apiccolo} follows.

By \eqref{contraapiccolo}, \eqref{claim6apiccolo} and using the quantitative isoperimetric inequality we finally deduce
\begin{align*}
C_N \bigl(\alpha(F_n,B_1)\bigr)^2
&   \leq \p(F_n)-\p(B_1)
    \leq \gamma_n \bigl( \nl_{\alpha_n}(B_1)-\nl_{\alpha_n}(F_n) \bigr)\\
&   \leq \bar{\gamma} \, \big| \nl_{\alpha_n}(B_1)-\nl_{\alpha_n}(F_n) \big|
    \to0,
\end{align*}
that is, $F_n$ converges to $B_1$ in $L^1$.
Hence \eqref{contraapiccolo} is in contradiction with Lemma~\ref{lemma:uniformity}
for $n$ large enough.
\end{proof}

We conclude this section with the proof of Theorem~\ref{teo:apiccolo2}.

\begin{proof}[Proof of Theorem~\ref{teo:apiccolo2}]
First of all we notice
that, since for masses smaller than $\mglob$ the ball is the unique global minimizer,
for each $m>0$ there exists $k_m\in\N$ such that $f_{k_m}(m)=\min_if_i(m)$.
Setting $m_0=0$, $m_1=\mglob$, we have by Theorem~\ref{teo:apiccolo} that \eqref{eq:apiccolo2} holds for $k=1$.
In the following, we denote by $E^m_R$ a solution to the constrained minimum problem
$$
\min\{ \f(E): E\subset B_{R},\, |E|=m \}.
$$
We remark that
\begin{equation}\label{eq:convinf}
\f(E^m_R)\rightarrow \inf \left\{\f(E): E\subset\R^N,\, |E|=m \right\}\, \mbox{ as } R\rightarrow\infty\,,
\end{equation}
and that, given $\bar{m}>0$, for every $m<\bar{m}$ and for every $R>0$ the volume of each connected component of $E^m_R$
is bounded from below by a positive constant $M_{\bar{m}}>0$ depending on $\bar{m}$
(this conclusion can be obtained by arguing as in the proof of Theorem~\ref{teo:regmin},
showing in particular that each set $E^m_R$ is an $(\omega, r_0)$-minimizer for some constant $\omega$ independent of $m\leq\bar{m}$).

We now define
$$
m_2:=\sup\bigl\{m\geq m_1 : f_2(m')={\textstyle \inf_{|E|=m'}}\f(E) \text{ for each }m'\in[m_1,m)\bigr\}
$$
and we show that $m_2>m_1$.
Indeed, fix $\e>0$ and $m\in(m_1,m_1+\e)$.
Observe that the sets $(E^m_R)_R$ cannot be equibounded,
or otherwise they would converge (as $R\to\infty$) to a global minimizer of $\f$ with volume $m$,
whose existence is excluded by Theorem~\ref{teo:apiccolo}.
The fact that the diameter of $E^m_R$ tends to infinity,
combined with the uniform density lower bound satisfied by $E^m_R$ (which, in turn, follows from the quasiminimality property),
guarantees that for all $R$ large enough the set $E_R^m$ is not connected;
moreover, if $\e$ is small enough, each of its connected component has mass smaller than $\mglob$,
as a consequence of the lower bound on the volume of the connected components.
Then we can write $E^m_R = F_1\cup F_2$, with $|F_1|,|F_2|<\mglob$ and $F_1\cap F_2=\emptyset$,
so that we can decrease the energy of $E^m_R$ by replacing each $F_i$ by a ball of the same volume,
sufficiently far apart from each other, obtaining that $f_2(m)\leq\f(E^m_R)$.
By \eqref{eq:convinf} we easily conclude that
$f_2(m)= \inf_{|E|=m}\f(E)$ for every $m\in(m_1,m_1+\e)$, from which follows that $m_2>m_1$.
Moreover, by definition of $m_2$, we have that \eqref{eq:apiccolo2} holds for $k=2$.

We now proceed by induction, defining
$$
m_{k+1}:=\sup\bigl\{m\geq m_k \,:\, f_{k+1}(m') ={\textstyle \inf_{|E|=m'}}\f(E) \text{ for each }m'\in[m_k,m)\bigr\}
$$
and showing that $m_k<m_{k+1}$.
Arguing as before, we consider $m\in(m_{k},m_{k}+\e)$, for some $\e>0$ small enough,
and we observe that for $R$ sufficiently large the set $E_R^m$ is not connected,
and each of its connected components has volume belonging to an interval $(m_{i-1},m_i]$ for some $i\leq k$.
By the inductive hypothesis we can obtain a new set $F_R^m$, union of a finite number of disjoint balls,
such that $\f(F_R^m)\leq\f(E_R^m)$, simply by replacing each connected component of $E_R^m$ by a disjoint union of balls.
We can also assume that at least one of these balls, say $B$, has volume larger than $\e$ (if we choose for instance $\e<\frac{m_1}{2}$);
in this way $|F_R^m\setminus B|<m_k$ and we can decrease the energy of $F_R^m$ by replacing $F_R^m\setminus B$ by a finite union of at most $k$ balls.
With this procedure we find a disjoint union of at most $k+1$ balls whose energy is smaller than $\f(F_R^m)$,
so that, recalling \eqref{eq:convinf} and that $\f(F_R^m)\leq\f(E_R^m)$,
we conclude that $f_{k+1}(m)=\inf_{|E|=m}\f(E)$ for every $m\in(m_k,m_k+\e)$.
This completes the proof of the inequality $m_k<m_{k+1}$,
and shows also, by definition of $m_{k}$, that \eqref{eq:apiccolo2} holds.

Now, assume by contradiction that $m_k\to\bar{m}<\infty$ as $k\to\infty$.
Since each interval $(m_k,m_{k+1})$ is not degenerate, the definition of $m_k$ as a supremum
ensures that we can find an increasing sequence of masses $\bar{m}_k\to\bar{m}$
such that an optimal configuration for $\min_if_i(\bar{m}_k)$ is given by exactly $k+1$ balls.
Recalling that the constant $M_{\bar{m}}$ provides a lower bound on the volume of each ball of an optimal configuration,
the previous assertion is impossible for $k$ large and shows that $\lim_{k\to\infty}m_k=\infty$.
Finally, it is clear that the number of non-degenerate balls tends to $\infty$ as $m\to\infty$,
since the volume of each ball in an optimal configuration for $\min_if_i(m)$ must be not larger than $m_1$.
\end{proof}

\section{Appendix} \label{sect:app}

\subsection{Computation of the first and second variations}

\begin{proof}[Proof of Theorem~\ref{teo:var}]
The first and the second variations of the perimeter of a regular set $E$ are standard calculations
(see, \emph{e.g.}, \cite{Sim}) and lead to
\begin{equation}\label{eq:firvarper}
\frac{\dd}{\dd t}{\p(E_t)}_{|_{t=0}} = \int_{\partial E} H_{\partial E}\langle X,\nu_{E}\rangle \,\hn
\end{equation}
and
\begin{align}\label{eq:secvarper}
\frac{\dd^2}{\dd t^2}{\p(E_t)}_{|_{t=0}} &= \int_{\partial E}\left( \,|D_{\tau}\langle X,\nu_{E}\rangle|^2-|B_{\partial E}|^2\langle X,\nu_{E}\rangle^2 \right)\,\dd\hn \nonumber\\
& \hspace{.5cm}+ \int_{\partial E} H_{\partial E} \left( \langle X,\nu_E \rangle \,\div X - \div_\tau\bigl(X_\tau\langle X,\nu_E\rangle \bigr)\right)\dd\hn\,.
\end{align}
This particular form of the second variation is in fact obtained in \cite[Proposition~3.9]{CagMorMor},
and we rewrote the last term according to \cite[equation~(7.5)]{AceFusMor}.

So now on we will focus on the calculation of the first and the second variation of the nonlocal part. In order to compute these quantities we introduce the smoothed potential
$$
G_{\delta}(a,b):=\frac{1}{(|a-b|^2+\delta^2)^{\frac{\alpha}{2}}}
$$
for $\delta>0$, and the associated nonlocal energy
$$
\nl_{\delta}(F):=\int_{F}\int_{F}G_{\delta}(a,b)\,\dd a\dd b.
$$
We remark that the following identities hold:
\begin{equation}\label{eq:idnabla}
\nabla_x \bigl( G_{\delta}(\Phi_t(x),\Phi_t(y)) \bigr) = \nabla_a G_{\delta}(\Phi_t(x),\Phi_t(y))\cdot D\Phi_t(x),
\end{equation}
\begin{equation}\label{eq:idnabla2}
\nabla_b G_{\delta}(a,b) = \nabla_a G_{\delta}(b,a).
\end{equation}

\smallskip
\noindent
{\it Step 1: first variation of the nonlocal term.}
The idea to compute the first variation of the nonlocal part is to prove the following two steps:
\begin{enumerate}
\item $\nl_{\delta}(E_t)\stackrel{\delta\rightarrow0}{\longrightarrow}\nl(E_t)$ uniformly for $t\in(-t_0,t_0)$,
\item $\frac{\partial}{\partial t}\nl_{\delta}(E_t)$ converges uniformly for $t\in(-t_0,t_0)$ to some function $H(t)$ as $\delta\to0$,
\end{enumerate}
where $t_0<1$ is a fixed number. From (1) and (2) it follows that
\begin{equation} \label{var1}
\frac{\dd}{\dd t}{\nl(E_t)}_{|_{t=0}}=H(0)=\lim_{\delta\rightarrow0}\frac{\partial}{\partial t}{\nl_{\delta}(E_t)}_{|_{t=0}}.
\end{equation}

We prove (1). We have that
\begin{eqnarray*}
|\nl_{\delta}(E_t)-\nl(E_t)|
= \bigg| \int_{E_t}\int_{E_t} \bigl(G_{\delta}(x,y)-G(x,y)\bigr)\,\dd x\dd y \bigg|
\leq \int_{B_R}\int_{B_R} |G_{\delta}(x,y)-G(x,y)|\,\dd x\dd y\,,
\end{eqnarray*}
where we have used the fact that $E$ is bounded and hence $E_t\subset B_R$ for some ball $B_R$.
It is now easily seen that the last integral in the previous expression tends to 0 as $\delta\to0$,
thanks to the Lebesgue's Dominated Convergence Theorem, hence
$$
\sup_{t\in(-t_0,t_0)}|\nl_{\delta}(E_t)-\nl(E_t)|\rightarrow0
\qquad\text{as }\delta\to0.
$$

We now prove (2). By a change of variables and using \eqref{eq:idnabla} and \eqref{eq:idnabla2} we have
\begin{align*}
\frac{\partial}{\partial t}\nl_{\delta}(E_t)
&= 2\int_{E}\int_{E}\frac{\partial J\Phi_t}{\partial t}(x)J\Phi_t(y)G_{\delta}(\Phi_t(x),\Phi_t(y))\,\dd x\dd y \\
&\hspace{.3cm}+ 2\int_{E}\int_{E}J\Phi_t(x)J\Phi_t(y)\langle\nabla_x \bigl( G_{\delta}(\Phi_t(x),\Phi_t(y)) \bigr) \cdot (D\Phi_t(x))^{-1}, X(\Phi_t(x))\rangle\,\dd x\dd y \\
&= \int_{E}\int_{E}f(t,x,y)G_{\delta}(\Phi_t(x),\Phi_t(y))\,\dd x\dd y \\
&\hspace{.3cm}+ \int_{\partial E} \left(\int_{E} g(t,x,y)G_{\delta}(\Phi_t(x),\Phi_t(y))\,\dd y \right)\,\dd \hn(x) \,,
\end{align*}
where $J\Phi_t := \det (D\Phi_t)$ is the jacobian of the map $\Phi_t$,
$$
f(t,x,y):=2\frac{\partial J\Phi_t}{\partial t}(x)J\Phi_t(y)-2\div_x\bigl(J\Phi_t(x)J\Phi_t(y)X(\Phi_t(x))\cdot(D\Phi_t(x))^{-T}\bigr)\,,
$$
$$
g(t,x,y):=J\Phi_t(x)J\Phi_t(y)\langle X(\Phi_t(x))\cdot(D\Phi_t(x))^{-T},\nu(x) \rangle
$$
and in the last step we used integration by parts and Fubini's Theorem.
Now since $f$ and $g$ are uniformly bounded on $(-t_0,t_0)\times E\times E$ and $(-t_0,t_0)\times \partial E\times E$ respectively,
it is easily seen that
$$
\frac{\partial}{\partial t}\nl_{\delta}(E_t)\stackrel{\delta\rightarrow0}{\longrightarrow} H(t)\,\;\,\mbox{uniformly for }
t\in(-t_0,t_0)\,,
$$
where
$$
H(t):=\int_{E}\int_{E}f(t,x,y)G(\Phi_t(x),\Phi_t(y))\,\dd x\dd y + \int_{\partial E} \left(\int_{E} g(t,x,y)G(\Phi_t(x),\Phi_t(y))\,\dd y \right)\,\dd \hn(x).
$$

We finally compute \eqref{var1}.
Recalling that
\begin{equation} \label{eq:jac}
{\frac{\partial J\Phi_t}{\partial t}}_{|_{t=0}}=\div X\,,
\end{equation}
we have
\begin{eqnarray*}
\frac{\partial}{\partial t}{\nl_{\delta}(E_t)}_{|_{t=0}} & = & 2\int_{E}\int_{E}\biggl( \frac{\div X(x)}{(|x-y|^2+\delta^2)^{\frac{\alpha}{2}}} -\alpha
																											\frac{\langle X(x),x-y \rangle}{(|x-y|^2+\delta^2)^{\frac{\alpha+2}{2}}} \biggr)\,\dd x\dd y  \\
																							 & = & 2\int_{E}\int_{E} \div_x\left( \frac{X(x)}{(|x-y|^2+\delta^2)^{\frac{\alpha}{2}}} \right)\,\dd x\dd y  \\
																							 & = & 2\int_{\partial E}\left( \int_{E} \frac{\langle X(x), \nu(x) \rangle}{(|x-y|^2+\delta^2)^{\frac{\alpha}{2}}} \,\dd y\right)\,\dd \hn(x)
\end{eqnarray*}
(where we used the divergence Theorem and Fubini's Theorem in the last equality),
and hence by letting $\delta\to0$ we conclude that
\begin{eqnarray*}
H(0)
= 2\int_{\partial E}\left( \int_{E} \frac{\langle X(x), \nu(x) \rangle}{|x-y|^{\alpha}} \,\dd y \right)\,\dd\hn(x)
= 2\int_{\partial E} v_E \,\langle X,\nu \rangle\,\dd\hn\,.
\end{eqnarray*}
This, combined with \eqref{eq:firvarper}, concludes the proof of the formula for the first variation of $\f$.

\smallskip
\noindent
{\it Step 2: second variation of the nonlocal term.}
We will compute the second variation of the nonlocal term by showing that
$$
\frac{\partial^2}{\partial t^2}\nl_{\delta}(E_{t})\stackrel{\delta\rightarrow0}{\longrightarrow}K(t)\;\;\mbox{uniformly in}\;t\in(-t_0,t_0)
$$
for some function $K$, hence getting
\begin{equation}\label{var2}
\frac{\dd^2}{\dd t^2}{\nl(E_{t})}_{|_{t=0}} = K(0) = \lim_{\delta\rightarrow0}\frac{\partial^2}{\partial t^2}{\nl_{\delta}(E_{t})}_{|_{t=0}}\,.
\end{equation}
First of all we have that
\begin{align} \label{var3}
\frac{\partial^2}{\partial t^2} \nl_{\delta}(E_{t})
&   = \frac{\partial}{\partial t} \biggl[2\int_{E}\int_{E}\frac{\partial J\Phi_t}{\partial t}(x) J\Phi_t(y) G_{\delta}(\Phi_t(x),\Phi_t(y)) \,\dd x\dd y \nonumber\\
&   \hspace{0.5cm} + 2\int_{E}\int_{E}J\Phi_t(x)J\Phi_t(y)\langle\nabla_a G_{\delta}(\Phi_t(x),\Phi_t(y)), X(\Phi_t(x))\rangle\,\dd x\dd y \biggr] \nonumber\\
&   = 2\int_{E}\int_{E}\frac{\partial}{\partial t}\Big(\frac{\partial J\Phi_t}{\partial t}(x)J\Phi_t(y)\Big) G_{\delta}(\Phi_t(x),\Phi_t(y)) \,\dd x\dd y \nonumber\\
&   \hspace{0.5cm}+ 2\int_{E}\int_{E}J\Phi_t(x)\frac{\partial}{\partial t}J\Phi_t(y)\Big(\langle \nabla_{a}G_{\delta}(\Phi_t(x),\Phi_t(y)),X(\Phi_t(x))\rangle \nonumber\\
&   \hspace{1cm} +\langle \nabla_{b}G_{\delta}(\Phi_t(x),\Phi_t(y)),X(\Phi_t(y))\rangle \Big)\,\dd x\dd y \nonumber\\
&   \hspace{0.5cm} +2\int_{E}\int_{E} \langle \frac{\partial}{\partial t}\Big(J\Phi_t(x)J\Phi_t(y)X(\Phi_t(x))\Big) , \nabla_{a}G_{\delta}(\Phi_t(x),\Phi_t(y)) \rangle\,\dd x\dd y \nonumber\\
&   \hspace{0.5cm} +2\int_{E}\int_{E}J\Phi_t(x)J\Phi_t(y)\biggl( \sum_{i,j=1}^{N}\frac{\partial^2 G_{\delta}}{\partial a_i \partial a_j}(\Phi_t(x),\Phi_t(y))X_i(\Phi_t(x))X_j(\Phi_t(x)) \nonumber\\
&   \hspace{1cm} +\sum_{i,j=1}^{N}\frac{\partial^2 G_{\delta}}{\partial a_i \partial b_j}(\Phi_t(x),\Phi_t(y))X_i(\Phi_t(x))X_j(\Phi_t(y)) \biggr)\,\dd x\dd y\,.
\end{align}
Using identity \eqref{eq:idnabla} and integrating by parts, we can rewrite this expression as
\begin{align*}
\frac{\partial^2}{\partial t^2} &\nl_{\delta}(E_{t})
= \int_E\int_E f(t,x,y)G_\delta(\Phi_t(x),\Phi_t(y))\,\dd x\dd y \\
& + \int_E\int_E \Bigl( \langle \nabla_a G_{\delta}(\Phi_t(x),\Phi_t(y)), g_{1}(t,x,y) \rangle + \langle \nabla_b G_{\delta}(\Phi_t(x),\Phi_t(y)), g_{2}(t,x,y) \rangle \Bigr)\,\dd x\dd y \\
& + \int_E\int_{\partial E} \Bigl( \langle \nabla_a G_{\delta}(\Phi_t(x),\Phi_t(y)), h_{1}(t,x,y) \rangle + \langle \nabla_b G_{\delta}(\Phi_t(x),\Phi_t(y)), h_{2}(t,x,y) \rangle \Bigr) \,\dd\hn(x)\dd y\,,
\end{align*}
for some functions $f,g_1,g_2,h_1,h_2$ uniformly bounded in $(-t_0,t_0)\times\overline{E}\times\overline{E}$.
It is then easily seen that
$$
\frac{\partial^2}{\partial t^2}\nl_{\delta}(E_{t})\stackrel{\delta\rightarrow0}{\longrightarrow}K(t)\;\;\mbox{uniformly in}\;t\in(-t_0,t_0)\,,
$$
where $K(t)$ is simply obtained by replacing $G_\delta$ by $G$ in the previous expression.

We finally compute \eqref{var2}.
Setting $Z:=\frac{\partial^2 \Phi}{\partial t^2}_{|_{t=0}}$ we have that
$$
\frac{\partial^2 J\Phi_t}{\partial t^2}_{|_{t=0}} = \div Z + (\div X)^2-\sum_{i,j=1}^{N}\frac{\partial X_i}{\partial x_j}\frac{\partial X_j}{\partial x_i} = \div\bigl((\div X) X\bigr)\,.
$$
Therefore, computing \eqref{var3} at $t=0$, from this identity and recalling \eqref{eq:jac} we obtain
\begin{align*}
\frac{\partial^2}{\partial t^2}{\nl_{\delta}(E_{t})}_{|_{t=0}}
&= 2 \int_{E}\int_{E} \biggl[ \div\bigl((\div X)X\bigr)(x) G_{\delta}(x,y) + \div X(x)\div X(y) G_{\delta}(x,y) \biggr]\,\dd x\dd y \\
& \hspace{.5cm} + 4 \int_E\int_E \div X(y) \sum_{i=1}^{N}\Bigl( \frac{\partial G_{\delta}}{\partial x_i}(x,y)X_i(x) + \frac{\partial G_{\delta}}{\partial y_i}(x,y)X_i(y) \Bigr) \,\dd x\dd y \\
& \hspace{.5cm} + 2 \int_E\int_E \sum_{i,j=1}^{N} \biggl( \frac{\partial G_{\delta}}{\partial x_i}(x,y)\frac{\partial X_i}{\partial x_j}(x)X_j(x) + \frac{\partial^2 G_{\delta}}{\partial x_i \partial x_j}(x,y)X_i(x)X_j(x) \\
& \hspace{1cm} + \frac{\partial^2 G_{\delta}}{\partial x_i \partial y_j}(x,y)X_i(x)X_j(y) \biggr) \,\dd x\dd y
 =: I_1 + I_2 + I_3\,.
\end{align*}
By integrating by parts in $I_1$, the sum of the first two integrals is equal to
\begin{align*}
I_1+I_2
&=
2\int_{E}\int_{E} \langle\nabla_x G_{\delta}(x,y), X(x)\rangle \bigl(\div X(x)+\div X(y)\bigr) \,\dd x\dd y \\
& \hspace{0.5cm} +2\int_{E}\int_{\partial E}G_{\delta}(x,y) \bigl(\div X(x) + \div X(y) \bigr)\langle X(x), \nu (x) \rangle\,\dd\hn(x)\dd y\,.
\end{align*}
Hence
\begin{align*}
\frac{\partial^2}{\partial t^2}&{\nl_{\delta}(E_{t})}_{|_{t=0}}
= 2\int_{E}\int_{\partial E}G_{\delta}(x,y) \bigl(\div X(x)+\div X(y)\bigl) \langle X(x), \nu(x) \rangle\,\dd\hn(x)\dd y \\
& \hspace{.5cm}+2\int_{E}\int_{E} \Bigl( \div_x \bigl( \langle \nabla_x G_{\delta}(x,y), X(x)\rangle X(x) \bigr)
+ \div_y \bigl( \langle \nabla_x G_{\delta}(x,y), X(x)\rangle X(y) \bigr) \Bigr)\,\dd x\dd y \\
& = 2\int_{E}\left( \int_{\partial E} \div_x\bigl(G_{\delta}(x,y)X(x)\bigr)\langle X(x),\nu(x) \rangle \,\dd\hn(x) \right)\,\dd y \\
& \hspace{.5cm} + 2\int_{E}\left( \int_{\partial E} \div_x\bigl(G_{\delta}(x,y)X(x)\bigr)\langle X(y),\nu(y) \rangle \,\dd\hn(y) \right)\,\dd x  \\
& = 2\int_{\partial E}\left( \int_{E}\div_x\bigl(G_{\delta}(x,y)X(x)\bigr)\,\dd y \right)\langle X(x),\nu(x) \rangle \,\dd\hn(x) \\
& \hspace{.5cm} + 2\int_{\partial E}\int_{\partial E} G_{\delta}(x,y) \langle X(x),\nu(x) \rangle\langle X(y),\nu(y) \rangle \,\dd\hn(x)\dd\hn(y)\,,
\end{align*}
where the second equality follows after having applied the divergence theorem, and the last one by Fubini's Theorem and the divergence theorem.
Thus, using the Lebesgue's Dominated Convergence Theorem to compute the limit of the previous quantity as $\delta\rightarrow0$, and recalling that $\alpha\in(0,N-1)$, we obtain
\begin{equation} \label{var4}
\begin{split}
\frac{\partial^2}{\partial t^2}{\nl_{\delta}(E_{t})}_{|_{t=0}} & = 2\int_{\partial E}\left( \int_{E}\div_x\bigl(G(x,y)X(x)\bigr)\,\dd y \right)\langle X(x),\nu(x) \rangle \,\dd\hn(x) \\
& \quad+ 2\int_{\partial E}\int_{\partial E}G(x,y) \langle X(x),\nu(x) \rangle\langle X(y),\nu(y) \rangle \,\dd\hn(x)\dd\hn(y)\,.
\end{split}
\end{equation}
We can rewrite the first integral in the previous expression as
\begin{align*}
2\int_{\partial E}\biggl( \int_{E}\div_x&\bigl(G(x,y)X(x)\bigr)\,\dd y \biggr)\langle X(x),\nu(x) \rangle \,\dd\hn(x)
 = 2\int_{\partial E}\div\bigl(v_{E}X\bigr)\langle X,\nu \rangle \,\dd\hn \\
& = 2\int_{\partial E} \Bigl( v_E(\div X)\langle X,\nu\rangle
    + \langle\nabla v_E,X_\tau\rangle\langle X,\nu\rangle
    + \partial_\nu v_E\,\langle X,\nu\rangle^2 \Bigr)\,\dd\hn \\
& = 2\int_{\partial E} \Bigl( v_E(\div X)\langle X,\nu\rangle
    - v_E\,\div_\tau \bigl(X_\tau\langle X,\nu\rangle\bigr)
    + \partial_\nu v_E\,\langle X,\nu\rangle^2 \Bigr)\,\dd\hn\,.
\end{align*}
Finally, combining this expression with \eqref{var4} and \eqref{eq:secvarper},
we obtain the formula in the statement.
\end{proof}

\subsection{Computation of $\mathcal{I}^{N,\alpha}$}
Here we want to get an explicit expression of the integral
$$
\mathcal{I}^{N,\alpha}:= \int_{B_{1}} \frac{\langle x-y, x\rangle}{|x-y|^{\alpha+2}} \,\dd y
$$
appearing in Section~\ref{sect:ball}, at least in the case $N=3$.
First of all, since $\mathcal{I}^{N,\alpha}$ is independent of $x\in S^{N-1}$, we fix $x=e_{1}$.
By Fubini's Theorem we get
$$
\mathcal{I}^{N,\alpha}
=
\int_{B_{1}} \frac{1-y_{1}}{|e_{1}-y|^{\alpha+2}} \,\dd y
=
\int_{-1}^{1} \biggl( \int_{B_t} \frac{1-t}{\bigl((1-t)^2+|z|^2\bigr)^{\frac{\alpha+2}{2}}} \,\dd \mathcal{L}^{N-1}(z) \biggr) \dd t\,,
$$
where $B_t:=B^{N-1}(0, \sqrt{1-t^2})$ denotes a $(N-1)$-dimensional ball of radius $\sqrt{1-t^2}$ centered at the origin.
To treat the inner integral, we apply the co-area formula (see \cite[equation~(2.74)]{AFP}),
by integrating on the level sets of the function $f_t(z):=\sqrt{(1-t)^2+|z|^2}$, $z\in\R^{N-1}$:
setting $\delta(r)=\sqrt{r^2-(1-t)^2}$, we get
\begin{align*}
\int_{B_t} \frac{1}{\bigl((1-t)^2+|z|^2\bigr)^{\frac{\alpha+2}{2}}} \,\dd \mathcal{L}^{N-1}(z)
& =
\int_{1-t}^{\sqrt{2(1-t)}} \biggl( \int_{\partial B^{N-1}(0,\delta(r))} \frac{1}{r^{\alpha+1}\sqrt{r^2-(1-t)^2}}\,\dd\mathcal{H}^{N-2} \biggr) \,\dd r \\
& =
(N-1)\omega_{N-1} \int_{1-t}^{\sqrt{2(1-t)}} \frac{(r^2-(1-t)^2)^\frac{N-3}{2}}{r^{\alpha+1}}\,\dd r.
\end{align*}
Therefore
$$
\mathcal{I}^{N,\alpha}
=
(N-1)\omega_{N-1} \int_{-1}^{1}(1-t)
\biggl( \int_{1-t}^{\sqrt{2(1-t)}} \frac{(r^2-(1-t)^2)^\frac{N-3}{2}}{r^{\alpha+1}}\,\dd r \biggr) \, \dd t.
$$
From real analysis we know that we can write the inner integral in term of simple functions if and only if $N$ is odd or $\alpha$ is an integer. Since we are interested in the physical case ($N=3$, $\alpha=1$), we just compute the above integral for $N=3$, obtaining
\begin{equation}\label{eq:integral}
\mathcal{I}^{3,\alpha} = 2\pi \frac{2^{2-\alpha}}{(4-\alpha)(2-\alpha)}\,.
\end{equation}


\bigskip
\bigskip
\noindent
{\bf Acknowledgments.}
{We wish to thank Massimiliano Morini for having introduced us to the study of this problem and for multiple helpful discussions.}


\bibliographystyle{siam} 
\bibliography{bibliografia}

\end{document}